\newtheorem{theorem}{Theorem}[section]
\newtheorem{lemma}[theorem]{Lemma}
\newtheorem{corollary}[theorem]{Corollary}
\newtheorem{example}[theorem]{Example}
\newtheorem{proposition}[theorem]{Proposition}
\newtheorem{remark}[theorem]{Remark}
\title[An adjunction theorem]{An adjunction between a category of complete restriction monoids and a category of \'etale categories}
\author{Mark V. Lawson}
\address{Mark V. Lawson, Department of Mathematics
and the
Maxwell Institute for Mathematical Sciences,
Heriot-Watt University,
Riccarton,
Edinburgh EH14 4AS,
UNITED KINGDOM}
\email{m.v.lawson@hw.ac.uk}
\begin{document}

\begin{abstract}
We prove that there is an adjunction between what we call \'etale topological categories
and restriction quantal frames that leads to an adjunction with a category of complete restriction monoids.
This generalizes the adjunction between \'etale groupoids and pseudogroups.
\end{abstract}
\maketitle

\section{Introduction}

What do we mean by the phrase `noncommutative topological space'?
One answer, of course, is provided by $C^{\ast}$-algebras \`{a} la Connes \cite{Connes}.
A more concrete answer is provided by \'etale groupoids \cite{Paterson, Renault, Resende1}.
An explicit description of \'etale groupoids as noncommutative topological spaces can be found in \cite[1. Introduction]{K}.
However, groupoids come equipped with an involution which renders them rather special kinds of noncommutative spaces.
On the other hand, the more general \'etale categories do not share that feature.
We therefore regard \'etale categories as being noncommutative topological spaces.
This approach was pursued in \cite{KL} where we developed the theory of \'etale categories from the localic point of view inspired by Resende's work \cite{Resende2}. 
I would like to thank my co-author, Ganna Kudryavtseva, who particularly pushed for this point of view.
In this paper, on the other hand, the theory of \'etale categories is developed from the {\em topological point of view}
extending the approach described in \cite{LL} for \'etale groupoids,
though we should stress here that the proofs have to be different.
We shall prove, as Theorem~\ref{them:adjunction-redux}, that there is an adjunction between
a suitable category of complete restriction monoids and a suitable catgeory of (small) \'etale categories.  
The motivation for this paper came from \cite{Bice} and \cite{CM}.

\section{Notation and background results}

Let $(X,\leq)$ be a poset.
If $Y \subseteq X$ define 
$$Y^{\uparrow} = \{x \in X \colon \exists y \in X;  y \leq x\}$$ 
and
$$Y^{\downarrow} = \{x \in X \colon \exists y \in X;  x \leq y\}.$$
In the case where $Y = \{y\}$, we write $y^{\uparrow}$ and $y^{\downarrow}$ 
instead of $\{y\}^{\uparrow}$ and $\{y\}^{\downarrow}$, respectively. 
If $Y = Y^{\downarrow}$ we say that $Y$ is an {\em order-ideal}.
The subset $Y$ is said to be {\em downwards directed} if $x,y \in Y$ implies that
there is $z \in Y$ such that $z \leq x,y$.
If $Y = Y^{\uparrow}$ we say that $Y$ is {\em closed upwards}.
The {\em bottom element} of $X$, if it exists, is unique and is the smallest element of $X$;
we shall denote it by $0$.
If $X$ and $Y$ are posets then a function $f \colon X \rightarrow Y$ is {\em order preserving}
if $x \leq x'$ implies that $f(x) \leq f(x')$.
A bijective function $f$ is said to be an {\em order isomorphism} if both $f$ and $f^{-1}$ are order preserving.

We shall be working with {\em small} categories $C$.
In such a category, all elements will be arrows.
Amongst such arrows are the identities.
The set of identities of $C$ is denoted by $C_{o}$.
Let $a \in C$.
Denote by $\mathbf{d}(a)$ the unique right identity of $a$, and denote by $\mathbf{r}(a)$
the unique left identity of $a$.
Observe that $ab$ is defined in the category $C$ if and only if $\mathbf{d}(a) = \mathbf{r}(b)$.
We write $\exists ab$ if the product $ab$ is defined in the category.
Denote by $C \ast C$ the set of all ordered pairs $(a,b)$ such that  $\mathbf{d}(a) = \mathbf{r}(b)$.
Denote by $\mathbf{m} \colon C \ast C \rightarrow C$ the multiplication map.
We therefore regard categories as `monoids with many objects'.
Let $C$ be a small category.
A subset $A \subseteq C$ is said to be a {\em local bisection} if $\mathbf{d}$ (respectively, $\mathbf{r}$) is injective when restricted to $A$.
You can check that the product of local bisections is again a local bisection.
A functor $F \colon C \rightarrow D$ between categories is said to be
{\em $\mathbf{d}$-injective}
if $F(x) = F(y)$ and $\mathbf{d}(x) = \mathbf{d}(y)$ imply that $x = y$;
such a functor is said to be  
{\em $\mathbf{d}$-surjective} 
if $F(e)= \mathbf{d}(y)$
implies there exists an element $x \in C$ such that $\mathbf{d}(x) = e$ and $F(x) = y$.
A functor which is both $\mathbf{d}$-injective and $\mathbf{d}$-surjective 
is said to be  $\mathbf{d}$-bijective.
We may make similar definitions for the $\mathbf{r}$-function. 
A {\em covering} functor is a functor that is both $\mathbf{d}$-bijective and $\mathbf{r}$-bijective.
The proof of the following is immediate from the definitions.

\begin{lemma}\label{lem:covering-functors-identities} 
Let $F \colon C \rightarrow D$ be a $\mathbf{d}$-injective functor (such as a covering functor). 
Then if $e$ is an identity in $D$ and $F(a) = e$ then $a$ is an identity.
\end{lemma}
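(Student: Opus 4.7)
The plan is to exploit the hypothesis $F(a)=e$ together with functoriality to produce two elements of $C$ with the same image under $F$ and the same $\mathbf{d}$-value, then invoke $\mathbf{d}$-injectivity.

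First I would compare $a$ with $\mathbf{d}(a)$. Since $e$ is an identity in $D$, we have $\mathbf{d}(e)=e$, and because $F$ is a functor it preserves the $\mathbf{d}$-operation, so
\[
F(\mathbf{d}(a))=\mathbf{d}(F(a))=\mathbf{d}(e)=e=F(a).
\]
Thus $F(a)=F(\mathbf{d}(a))$.

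Next I would check that $a$ and $\mathbf{d}(a)$ have the same right identity. This is immediate: $\mathbf{d}(\mathbf{d}(a))=\mathbf{d}(a)$ because $\mathbf{d}(a)$ is already an identity in $C$. Hence the pair $a$, $\mathbf{d}(a)$ satisfies both hypotheses of $\mathbf{d}$-injectivity, so $a=\mathbf{d}(a)$, which is to say $a$ is an identity. I do not foresee any real obstacle here — the statement is essentially a one-line consequence of functoriality of $\mathbf{d}$ combined with the defining property of $\mathbf{d}$-injectivity, which is exactly why the author describes the proof as immediate.
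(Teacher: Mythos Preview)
Your argument is correct and is exactly the kind of direct verification the paper has in mind when it says the proof is immediate from the definitions: you use functoriality to get $F(\mathbf{d}(a))=\mathbf{d}(F(a))=e=F(a)$, note $\mathbf{d}(\mathbf{d}(a))=\mathbf{d}(a)$, and apply $\mathbf{d}$-injectivity to conclude $a=\mathbf{d}(a)$. There is nothing to add.
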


A category $C$ is said to be {\em topological} if $C$ is equipped with a topology in such a way that
the maps $\mathbf{m}$, $\mathbf{d}$ and $\mathbf{r}$ are all continuous.

We conclude by saying something about the classical theory of topological spaces and frames.
This is discussed in more detail in \cite[Chapter II]{J}.
There are two ways to regard a space:
we can take the notion of point as primary and derive the notion of open subset as secondary (an open set is a special set of points), 
or we can take the notion of open set as primary and the notion of point as secondary (how to get points from open sets will be discussed later);
the former leads to the classical notion of topological space whereas the latter leads to the notions of frames and locales --- the so-called `pointless spaces'.
There is a relationship between these two approaches but it is not an isomorphism.
A {\em frame} is a join-complete lattice in which finite meets distribute over arbitrary joins.
Frames have top elements ($1$) and bottom elements ($0$) and morphisms of frames are required to preserve these elements,
arbitrary joins and finite meets.
In this way, we obtain a category of frames which we shall denote by \mbox{\bf Frame}.
Its opposite category is called the category of locales denoted by \mbox{\bf Locale}.\\

\noindent
{\bf Definition. }If $X$ is a topological space, then its set of all open subsets $\Omega (X)$ forms a frame.\\

If $f \colon X \rightarrow Y$ is a continuous function then $f^{-1} \colon \Omega (Y) \rightarrow \Omega (X)$ is a morphism of frames.
Define $\Omega (f) = f^{-1}$.
Denote the category of topological spaces and continuous functions by \mbox{\bf Top}.
We have therefore defined a functor $\Omega$ from \mbox{\bf Top} to \mbox{\bf Locale}.

We shall define a functor going in the opposite direction, but before we do that
we shall motivate our definitions.\\

\noindent
{\bf Definition. }Let $X$ be a topological space and $x \in X$.
Define $O_{x}$ to be the set of all open sets that contain $x$.\\

Observe that $O_{x} \subseteq \Omega (X)$ is a completely prime filter \cite[Page 41]{J}.\\

\noindent
{\bf Definition. }If $F$ is a frame, denote by $\mathsf{pt}(F)$ the set of all completely prime filters in $F$;
we think of these as the `points' of $F$.\\

Be aware that in the frame $\Omega (X)$, 
it is quite possible for different points to give rise to the same completely prime filter, 
and we are not claiming that every completely prime filter arises from a point.
Nevertheless, this is the best we can do.
Given a frame $F$, we have a set of points $\mathsf{pt}(F)$.
We shall now endow this set with a topology.\\

\noindent
{\bf Definition. } Let $F$ be a frame.
For each $a \in F$, define $X_{a}$ to be the set of all completely prime filters in $F$ that contain $a$.\\

Observe that $X_{a} \subseteq \mathsf{pt}(F)$.
We have the following:
\begin{enumerate}
\item $X_{0} = \varnothing$.
\item $X_{1} = \mathsf{pt}(F)$.
\item $X_{a} \cap X_{b} = X_{a \wedge b}$.
\item $\bigcup_{i \in I} X_{a_{i}} = X_{\bigvee a_{i}}$.
\end{enumerate}
It follows that the set $\{X_{a} \colon a \in F\}$ forms a topology on $\mathsf{pt}(F)$.
If $\alpha \colon F \rightarrow G$ is a morphism of frames then $\alpha^{-1} \colon \mathsf{pt}(G) \rightarrow \mathsf{pt}(F)$
is a continous function.
Define $\mathsf{pt}(\alpha) = \alpha^{-1}$.
We have therefore defined a functor $\mathsf{pt}$ from \mbox{\bf Locale} to \mbox{\bf Top}.

By \cite[Theorem 1.4]{J}, the functor $\mathsf{pt}$ is right adjoint to the functor $\Omega$.
Define a function $\chi \colon F \rightarrow \Omega (\mathsf{pt}(F))$ by $a \mapsto X_{a}$.
This function is an isomorphism of frames if and only if whenever $a \nleq b$ there exists a completely prime filter
containing $a$ but omitting $b$.
In this case, we say that the frame is {\em spatial}.
Define a function $\omega \colon X \rightarrow \mathsf{pt}(\Omega (X))$ by $x \mapsto O_{x}$.
The function is a homeomorphism if and only if $\omega$ is a bijection.
In this case, we say that the topological space is {\em sober}.
The following were proved in \cite[page 43, Lemma 1.7]{J}.

\begin{lemma}\label{lem:sobriety-spatiality}\mbox{}
\begin{enumerate}
\item For any frame $F$, the topological space $\mathsf{pt}(F)$ is sober,
\item For any topological space $X$, the frame $\Omega (X)$ is spatial.
\end{enumerate}
\end{lemma}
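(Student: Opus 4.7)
The plan is to unpack the two conditions flagged in the excerpt: for (1), sobriety says the map $\omega \colon \mathsf{pt}(F) \to \mathsf{pt}(\Omega(\mathsf{pt}(F)))$, $P \mapsto O_{P}$, is a bijection; for (2), spatiality says that $a \nleq b$ in $\Omega(X)$ forces the existence of a completely prime filter containing $a$ but omitting $b$. I would argue both directly, without invoking the adjunction between $\Omega$ and $\mathsf{pt}$ as a black box.

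Part (2) is the short half. Given $U \nsubseteq V$ in $\Omega(X)$, pick a witness $x \in U \setminus V$. The excerpt has already observed that $O_{x}$ is a completely prime filter in $\Omega(X)$, and tautologically $U \in O_{x}$ while $V \notin O_{x}$. That settles spatiality.

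For part (1), injectivity of $\omega$ is immediate: if $O_{P_{1}} = O_{P_{2}}$ then, for each $a \in F$, $a \in P_{i} \Leftrightarrow P_{i} \in X_{a} \Leftrightarrow X_{a} \in O_{P_{i}}$, and the right-hand sides coincide. For surjectivity, start with a completely prime filter $\mathcal{F}$ on the topology of $\mathsf{pt}(F)$. Every open set of $\mathsf{pt}(F)$ has the form $X_{a}$ for some $a \in F$, so the natural candidate preimage is
\[ P \;=\; \{ a \in F \colon X_{a} \in \mathcal{F} \}. \]
Using the four identities $X_{0} = \varnothing$, $X_{1} = \mathsf{pt}(F)$, $X_{a \wedge b} = X_{a} \cap X_{b}$, and $\bigcup_{i} X_{a_{i}} = X_{\bigvee_{i} a_{i}}$ recorded in the excerpt, I would check in turn that $P$ contains $1$ but not $0$, is upward closed (since $a \leq b$ in $F$ forces $X_{a} \subseteq X_{b}$), is closed under binary meets, and is completely prime. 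That $\omega(P) = \mathcal{F}$ then drops out from $X_{a} \in \omega(P) \Leftrightarrow P \in X_{a} \Leftrightarrow a \in P \Leftrightarrow X_{a} \in \mathcal{F}$.

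The only delicate step is the complete-primeness clause for $P$: from $\bigvee_{i} a_{i} \in P$ we infer $\bigcup_{i} X_{a_{i}} = X_{\bigvee_{i} a_{i}} \in \mathcal{F}$, and complete primeness of $\mathcal{F}$ on the topology gives some $X_{a_{j}} \in \mathcal{F}$, hence $a_{j} \in P$. A small caveat worth noting is that the correspondence $a \mapsto X_{a}$ need not be injective (its injectivity is precisely spatiality of $F$, which we are not assuming here), but this causes no harm because every step of the argument is phrased in terms of membership of $X_{a}$ in $\mathcal{F}$ rather than of $a$ itself.
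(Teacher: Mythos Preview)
Your proof is correct. Note, however, that the paper does not actually give its own proof of this lemma: it simply cites \cite[page 43, Lemma 1.7]{J} (Johnstone's \emph{Stone spaces}). What you have written is essentially the standard argument one finds there, so there is nothing to compare at the level of strategy---you have supplied the details the paper chose to outsource.
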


The above adjunction leads to an equivalence of categories between spatial locales and sober spaces.

\section{Etale topological categories}

The class of semigroups we shall be studying are the following and were introduced in \cite{Lawson1991}.
Let $S$ be a semigroup with set of idempotents $\mathsf{E}(S)$.
We say that $S$ is an {\em Ehresmann semigroup} if there is a commutative subsemigroup $\mathsf{P} \subseteq \mathsf{E}(S)$,
called the set of {\em projections}, 
together with two unary operations $a \mapsto a^{\ast}$ and $a \mapsto a^{+}$, where $a^{\ast}, a^{+} \in \mathsf{P}$
and which are the identity on $\mathsf{P}$, such that $aa^{\ast} = a$, $a^{+}a = a$ and $(ab)^{\ast} = (a^{\ast}b)^{\ast}$
and $(ab)^{+} = (ab^{+})^{+}$.
There is a binary relation defined on Ehresmann semigroups as follows:
the ordered pair $(a,b)$ is in this binary relation if $a = a^{+}b = ba^{\ast}$.
In fact, this binary relation is a partial order.
It plays an important role in what follows but not for all elements.

Let $S$ and $T$ be Ehresmann semigroups.
A semigroup homorphism $\theta \colon S \rightarrow T$ is said to be a {\em morphism}
if $\theta (a^{\ast}) = \theta (a)^{\ast}$ and $\theta (a^{+}) = \theta (a)^{+}$.

With every Ehresmann semigroup $S$, we may associate a category $\mathsf{cat}(C)$ as follows:
define a partial product $\cdot$ on $S$ by $a \cdot b$ is defined if and only if $a^{\ast} = b^{+}$ in which case it is equal to $ab$;
the identities are the projections, where $\mathbf{d}(a) = a^{\ast}$ and $\mathbf{r}(b) = b^{+}$.
See \cite[page 437]{Lawson1991}.

An Ehresmann semigroup is called a {\em restriction} semigroup if it also satisfies two identities:
$fa = a(fa)^{\ast}$ and $af = (af)^{+}a$ for any projection $f$ and element $a$.
Classes of Ehresmann semigroups arose in the York School of semigroup theory \cite{GL}.
Restriction semigroups were studied by Marco Grandis in the context of local structures \cite{Grandis}.
Classes of restriction semigroups have been generalized to a categorical setting by Cockett et al
\cite{Cockett}.
The following example motivates our approach. 
See \cite[Example 1]{Lawson2021}.

\begin{example}\label{ex:example-one}
{\em Let $C$ be a small category.
For the time-being, we shall regard $C$ as a discrete topological space.
Consider the set $Q = \mathsf{P}(C)$ of all subsets of $C$.
If $A$ and $B$ are two such subsets then we may define their product $AB$ to be the set of all 
products $ab$ where $a \in A$, $b \in B$ and $\mathbf{d}(a) = \mathbf{r}(b)$.
In this way, $Q$ becomes a monoid with identity $C_{o}$.
However, the category structure of $C$ bleeds through to the monoid $Q$.
Put $\mathsf{P} = \mathsf{P}(C_{o})$.
These are a special class of idempotents that we shall call projections.
Define two unary operations on $Q$ as follows
$A^{\ast} = \{\mathbf{d}(a) \colon a \in A\}$
and
$A^{+}  = \{\mathbf{r}(a) \colon a \in A\}$.
With respect to these two operations, $Q$ becomes an Ehresmann monoid.}
\end{example}

We shall now describe the algebraic and order-theoretic properties of the monoids  $\mathsf{P}(C)$
introduced in Example~\ref{ex:example-one}.
Let $Q$ be a join-lattice equipped with a semigroup operation that distributes over joins.
Then $Q$ is called a {\em quantale}.
A {\em morphism} of quantales is a semigroup homomorphism that preserves joins.
Let $Q$ be a quantale.
We say that it is a {\em quantal frame} if $(Q,\leq)$ is a frame.
We say that a quantale is {\em unital} if the semigroup is actually a monoid.
An {\em Ehresmann quantale} is a unital quantale $Q$ that is also an Ehresmann semigroup with unit (or, identity) 
$e$ such that $e^{\downarrow} = \mathsf{P}$, and the maps $a \mapsto a^{\ast}$ and $a \mapsto a^{+}$ are join-maps.
An {\em Ehresmann quantal frame} is a quantal frame that is also an Ehresmann quantale;
these structures were introduced in \cite{KL}.
The Ehresmann monoids of the form $\mathsf{P}(C)$, as described in Example~\ref{ex:example-one},
are, in fact, Ehresmann quantal frames.

Let $C$ be a topological category.
We say that it is {\em\'etale} if both maps $\mathbf{d}$ and $\mathbf{r}$ are local homeomorphisms.
It follows \cite[Proposition 2.4]{Bice} that $C_{o}$ is an open subset of $C$ and the multiplication map is a local homeomorphism.
The following generalizes Example~\ref{ex:example-one}.

\begin{proposition}\label{prop:pelosi} Let $C$ be an \'etale topological category.
Then $\Omega (C)$ is an Ehresmann quantal frame.
\end{proposition}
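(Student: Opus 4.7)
The plan is to equip $\Omega(C)$ with a multiplication, an identity, and two unary operations, and then verify the axioms of an Ehresmann quantal frame one at a time, using the étaleness of $C$ to keep everything inside the frame of open sets.

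For $U, V \in \Omega (C)$, I would set
\[ UV = \mathbf{m}\bigl((U \times V) \cap (C \ast C)\bigr), \]
take $e = C_{o}$, and define $U^{\ast} = \mathbf{d}(U)$ and $U^{+} = \mathbf{r}(U)$. The first thing to check is that these operations land in $\Omega (C)$: since $C$ is étale, the maps $\mathbf{d}$ and $\mathbf{r}$ are local homeomorphisms (hence open), and by the cited consequence \cite[Proposition 2.4]{Bice} so is the multiplication $\mathbf{m} \colon C \ast C \to C$, while $C_{o}$ itself is open. Distributivity $U(\bigvee_{i} V_{i}) = \bigvee_{i} UV_{i}$ (and dually on the other side) is a routine set-theoretic identity once one unwinds the definition of the product, and associativity reduces to associativity of composition in $C$; this delivers the quantal frame structure.

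For the Ehresmann axioms, one verifies $U \cdot C_{o} = U = C_{o} \cdot U$ using that the only identity composable with $a$ on the right (resp.\ left) is $\mathbf{d}(a)$ (resp.\ $\mathbf{r}(a)$). Then $UU^{\ast} = U$ because a typical element $a \cdot \mathbf{d}(b)$ with $a, b \in U$ and $\mathbf{d}(a) = \mathbf{d}(b)$ collapses to $a$, while taking $b := a$ gives the reverse inclusion; the identity $U^{+}U = U$ is dual. For the two more delicate identities, one computes directly
\[ (UV)^{\ast} = \{\mathbf{d}(b) : b \in V,\ \mathbf{r}(b) \in \mathbf{d}(U)\} = (U^{\ast}V)^{\ast}, \]
using $\mathbf{d}(ab) = \mathbf{d}(b)$ and the observation that $i \in \mathbf{d}(U)$ composes with $b \in V$ precisely when $i = \mathbf{r}(b)$, in which case $ib = b$; the identity for $+$ is handled symmetrically. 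It remains to note that $e^{\downarrow}$ is exactly the set of open subsets of $C_{o}$; these form a commutative subsemigroup of idempotents because composable identities must coincide, so $UV = U \cap V$ whenever $U, V \subseteq C_{o}$; both $\ast$ and $+$ act as the identity on $e^{\downarrow}$ since $\mathbf{d}$ and $\mathbf{r}$ fix $C_{o}$ pointwise; and both $\ast$ and $+$ preserve arbitrary joins because taking images of sets preserves unions.

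The only step that is not essentially bookkeeping is the openness of $UV$. The étale hypothesis is exactly what is needed here: it makes $\mathbf{m}$ a local homeomorphism, hence an open map on $C \ast C$, so that the image of the open set $(U \times V) \cap (C \ast C)$ is open in $C$. Once this is in hand, every remaining axiom is forced by the purely algebraic content of Example~\ref{ex:example-one}, which already gives the Ehresmann-monoid identities on $\mathsf{P}(C)$; we simply restrict to the open sets and note that the required joins are the unions, which distribute and interact with images in the way we need.
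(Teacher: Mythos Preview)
Your proposal is correct and matches the paper's (implicit) argument: the paper states the proposition without proof, treating it as the evident generalization of Example~\ref{ex:example-one} once one observes that \'etaleness makes $\mathbf{m}$, $\mathbf{d}$, $\mathbf{r}$ open and $C_{o}$ open, so that the Ehresmann quantal frame operations on $\mathsf{P}(C)$ restrict to $\Omega(C)$. You have simply written out what the paper leaves to the reader.
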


We shall now describe an extra property possessed by monoids of the form $\Omega (C)$ as above.

\begin{lemma}\label{lem:base-base} Let $C$ be a topological categegory in which $\mathbf{d}$ and $\mathbf{r}$ are open maps.
Then the maps $\mathbf{d}$ and $\mathbf{r}$ are local homeomorphisms if
and only if the open local bisections form a base for the topology on $C$.
\end{lemma}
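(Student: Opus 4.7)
The plan is to prove the two implications separately, using the definition of local homeomorphism (an open map that is locally injective with continuous local inverse) and the characterization of a base (every open set containing a point contains a basic open neighborhood of that point).

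For the forward direction, assume $\mathbf{d}$ and $\mathbf{r}$ are local homeomorphisms. Fix $a \in C$ and an open set $U$ containing $a$. Since $\mathbf{d}$ is a local homeomorphism, there is an open neighborhood $V_{1}$ of $a$ on which $\mathbf{d}$ restricts to a homeomorphism onto an open subset of $C_{o}$; in particular $\mathbf{d}|_{V_{1}}$ is injective. Similarly, there is an open neighborhood $V_{2}$ of $a$ on which $\mathbf{r}|_{V_{2}}$ is injective. Set $V = U \cap V_{1} \cap V_{2}$. Then $V$ is open, $a \in V \subseteq U$, and both $\mathbf{d}$ and $\mathbf{r}$ remain injective on $V$ (restricting an injection stays injective). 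Thus $V$ is an open local bisection inside $U$, showing that the open local bisections form a base.

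For the reverse direction, assume the open local bisections form a base and that $\mathbf{d}$ and $\mathbf{r}$ are open maps. Fix $a \in C$. By the base hypothesis, there is an open local bisection $V$ containing $a$. I claim $\mathbf{d}|_{V} \colon V \to \mathbf{d}(V)$ is a homeomorphism onto an open set. Since $\mathbf{d}$ is continuous and $V$ is a local bisection, $\mathbf{d}|_{V}$ is a continuous bijection onto its image $\mathbf{d}(V)$; since $\mathbf{d}$ is open, $\mathbf{d}(V)$ is open in $C_{o}$. For continuity of the inverse, let $W$ be any open subset of $V$; then $W$ is open in $C$, so $\mathbf{d}(W)$ is open in $C_{o}$ and therefore open in $\mathbf{d}(V)$. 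Hence $\mathbf{d}|_{V}$ is a homeomorphism and $\mathbf{d}$ is a local homeomorphism at $a$. The argument for $\mathbf{r}$ is identical, with $V_{1}$ replaced by an analogous local bisection and $\mathbf{d}$ replaced by $\mathbf{r}$.

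The only subtle point is in the reverse direction: injectivity on a local bisection does not by itself force the local inverse to be continuous, and it is precisely the openness hypothesis on $\mathbf{d}$ and $\mathbf{r}$ that supplies the continuity of the local inverses and the openness of the image. I expect this to be the main conceptual hurdle; the rest of the argument is a direct unwinding of definitions, so no delicate topology beyond checking that intersections of open sets remain local bisections and that open maps send open subsets to open sets is required.
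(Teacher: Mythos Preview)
Your proof is correct and follows essentially the same approach as the paper's: for the forward direction you intersect a given open set with neighborhoods on which $\mathbf{d}$ and $\mathbf{r}$ are injective, and for the reverse direction you use openness of $\mathbf{d}$ and $\mathbf{r}$ to upgrade the bijection from a local bisection to its $\mathbf{d}$-image into a homeomorphism. The paper presents the two implications in the opposite order but the arguments are otherwise the same.
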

\begin{proof} Suppose first that the open local bisections form a base for the topology on $C$.
We prove that $\mathbf{d}$ is a local homeomorphism.
Let $x \in C$.
Then $x \in A$ for some open local bisection $A$.
The set $\mathbf{d}(A) = \{\mathbf{d}(a) \colon a \in A\}$ is open
and there is a bijection between $A$ and $\mathbf{d}(A)$.
We prove that this is a homeomorphism.
Let $B$ be an open subset of $A$.
Then $\mathbf{d}(B)$ is an open subset of $\mathbf{d}(A)$.
Let $U$ be an open subset of $\mathbf{d}(A)$.
Define $B$ to be the subset of $A$ consisting of elements $a \in A$ such that $\mathbf{d}(a) \in U$.
By assumption, the map $\mathbf{d}$ is continuous and so $B$ is also open.

We now prove the converse.
Suppose that $\mathbf{d}$ and $\mathbf{r}$ are local homeomorphisms.
We prove that the open local bisections form a base for the topology on $C$.
Let $x \in U$, where $U$ is any open subset of $C$.
By assumption, there is an open set $A_{x}$ such that 
$x \in A_{x}$ and the map $\mathbf{d}$ induces a homeomorphism onto the set $\mathbf{d}(A_{x})$.
Similarly, there is an open set $x \in B_{x}$ such that the map $\mathbf{r}$ induces a homeomorphism
onto the set $\mathbf{r}(B_{x})$.
Observe that $x \in U \cap A_{x} \cap B_{x} = A$ is an open set containing $x$
on which $\mathbf{d}$ and $\mathbf{r}$ are both bijections.
It follows that $A$ is an open local bisection containing $x$ and contained in $U$.
We have therefore proved that every open subset of $C$ is a union of open local bisections.
\end{proof}

By Lemma~\ref{lem:base-base}, it follows that the open local bisections form a base 
for the topology on an \'etale category. 

Let $S$ be an Ehresmann quantale.
An element $a$ in $S$ is called a {\em partial isometry} if $b \leq a$ implies that $b = b^{+}a = ab^{\ast}$.
Observe that all projections are partial isometries.
Partial isometries are those elements where the partial order in the quantale
is determined purely algebraically.\\

\noindent
{\bf Definition.} Let $S$ be a restriction quantal frame.
Denote the set of partial isometries in $S$ by $\mathsf{PI}(S)$.\\

Although not closed under multiplcation in general,
they do form an order-ideal \cite[Lemma 2.29]{KL};
this result will be used many times below.
The following result describes the partial isometries in \'etale categories.

\begin{lemma}\label{lem:partial-isometry} In an \'etale category, the open partial isometries are precisely the
open local bisections.
\end{lemma}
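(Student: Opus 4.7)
The plan is to translate the quantale-theoretic definition of partial isometry into a concrete condition on open subsets of $C$, and then handle the two implications separately. Writing $A, B$ for open subsets of $C$ viewed as elements of $\Omega(C)$, the partial order in $\Omega(C)$ is inclusion, and the Ehresmann operations are $A^{+} = \mathbf{r}(A)$ and $A^{\ast} = \mathbf{d}(A)$. Unfolding the quantale product and using the fact that identities act trivially under category multiplication, one computes, for any open $B \subseteq A$,
\[
B^{+} \cdot A = \{a \in A \colon \mathbf{r}(a) \in \mathbf{r}(B)\}
\quad\text{and}\quad
A \cdot B^{\ast} = \{a \in A \colon \mathbf{d}(a) \in \mathbf{d}(B)\}.
\]
So $A$ is an open partial isometry precisely when, for every open $B \subseteq A$, both of these sets equal $B$.

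For the easy direction, suppose $A$ is an open local bisection and $B \subseteq A$ is open. The inclusion $B \subseteq B^{+} \cdot A$ is immediate. Conversely, if $a \in A$ satisfies $\mathbf{r}(a) = \mathbf{r}(b)$ for some $b \in B \subseteq A$, then injectivity of $\mathbf{r}$ on $A$ gives $a = b \in B$. The analogous argument with $\mathbf{d}$ yields $B = A \cdot B^{\ast}$, so $A$ is a partial isometry.

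For the substantive direction, suppose $A$ is an open partial isometry and let $a, b \in A$ with $\mathbf{d}(a) = \mathbf{d}(b)$; I want to conclude $a = b$. Because $C$ is \'etale, $\mathbf{d}$ is a local homeomorphism, so I may pick an open neighborhood $U$ of $a$ on which $\mathbf{d}$ is injective. The set $V = A \cap U$ is open in $C$, hence open in $A$, and applying the partial isometry hypothesis to $V$ gives $V = \{c \in A \colon \mathbf{d}(c) \in \mathbf{d}(V)\}$. Since $\mathbf{d}(b) = \mathbf{d}(a) \in \mathbf{d}(V)$ and $b \in A$, it follows that $b \in V \subseteq U$; but then $\mathbf{d}(a) = \mathbf{d}(b)$ forces $a = b$ by the choice of $U$. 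Hence $\mathbf{d}$ is injective on $A$, and the same argument with $\mathbf{r}$ (using that $\mathbf{r}$ is also a local homeomorphism) shows $A$ is an open local bisection.

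The only genuine use of the \'etale hypothesis is in the last paragraph, where it supplies a $\mathbf{d}$-injective (respectively, $\mathbf{r}$-injective) open neighborhood whose intersection with $A$ remains open; the remainder is algebraic manipulation of the formulas for $B^{+}\cdot A$ and $A\cdot B^{\ast}$. I do not anticipate any hidden obstacle.
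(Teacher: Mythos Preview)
Your proof is correct and follows essentially the same approach as the paper. The only cosmetic difference is in the second direction: the paper invokes Lemma~\ref{lem:base-base} to obtain an open local bisection $A_a \subseteq A$ containing $a$, whereas you use the local homeomorphism property of $\mathbf{d}$ directly to get a $\mathbf{d}$-injective open neighborhood $U$ and work with $V = A \cap U$; in both cases the partial isometry condition $V = A\cdot V^{\ast}$ forces $b \in V$, and injectivity of $\mathbf{d}$ on $V$ (or $U$) finishes the argument.
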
 
\begin{proof} We prove first that every open local bisection is an open partial isometry.
Let $A$ be an open local bisection.
Suppose that $B \subseteq A$ where $A$ is an open set.
We prove that $B = AB^{\ast}$.
Clearly, $B \subseteq AB^{\ast}$.
We prove the reverse inclusion.
Let $ae \in AB^{\ast}$ where $a \in A$ and $e \in B^{\ast}$.
Then $\mathbf{d}(a) = e$ and $e = \mathbf{d}(b)$ where $b \in B$.
But $a,b \in A$ and $\mathbf{d}(a) = \mathbf{d}(b)$.
By assumption, $A$ is a local bisection.
It follows that $a = b$ and so $a \in B$.
A dual argument shows that $B = B^{+}A$.
We have therefore proved that $A \leq B$ and so $B$ is a partial isometry.

We now prove the converse.
We use Lemma~\ref{lem:base-base}.
Let $A$ be an open partial isometry.
We shall prove that $A$ is a local bisection.
Let $a,b \in A$ such that $\mathbf{d}(a) = \mathbf{d}(b) = e$.
Since $A$ is an open set, there exists an open local bisetcion $A_{a}$ such that $a \in A_{a} \subseteq A$.
But $A$ is a partial isometry, and so $A_{a} = AA_{a}^{\ast}$.
Similarly, there is an open local bisection $A_{b}$ such that $b \in A_{b} \subseteq A$.
Again, it follows that $A_{b} = AA_{b}^{\ast}$.
Observe that $a,b \in A_{b}$ which is a local bisection.
It follows that $a = b$.
A dual argument shows us that, indeed, $A$ is a local bisection.
\end{proof}

Let $Q$ be an Ehresmann quantal frame with identity $e$ and top element $1$.
We say that $Q$ is {\em \'etale} if $1$ is a join of partial isometries
and that it is a {\em restriction quantal frame} if it is \'etale and the partial isometries are closed under multiplication.
Such quantales were introduced in \cite[Section 2.4]{KL}.
We now have the following which is the basis of the whole paper; 
it follows by Lemma~\ref{lem:base-base} and Lemma~\ref{lem:partial-isometry} 
and the fact that the product of two open local bisections is an open local bisection.

\begin{proposition}\label{prop:kemi}
Let $C$ be an \'etale topological category.
Then $\Omega (C)$ is a restriction quantal frame.
\end{proposition}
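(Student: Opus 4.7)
My plan is to verify the two extra conditions that upgrade an Ehresmann quantal frame (already furnished by Proposition~\ref{prop:pelosi}) to a restriction quantal frame: namely, that the top element is a join of partial isometries (\'etaleness of the quantale) and that the partial isometries are closed under multiplication.

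First, I would dispose of the \'etale condition. The top of $\Omega(C)$ is $C$ itself. Since $C$ is an \'etale topological category, its $\mathbf{d}$ and $\mathbf{r}$ maps are local homeomorphisms, so by Lemma~\ref{lem:base-base} the open local bisections form a base for the topology on $C$. In particular, $C$ is the union of all open local bisections. Invoking Lemma~\ref{lem:partial-isometry}, these open local bisections are precisely the open partial isometries in the Ehresmann quantal frame $\Omega(C)$. Thus $1 = C$ is a join (in $\Omega(C)$) of partial isometries, which is exactly the \'etaleness condition for the quantal frame.

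Next, I would verify closure of partial isometries under multiplication. Let $A, B \in \mathsf{PI}(\Omega(C))$. By Lemma~\ref{lem:partial-isometry}, $A$ and $B$ are open local bisections of $C$. The product $AB$ in $\Omega(C)$ is the set of all composites $ab$ with $a \in A$, $b \in B$, $\mathbf{d}(a) = \mathbf{r}(b)$, as in Example~\ref{ex:example-one}. It is a routine observation (flagged in Section~2) that the set-theoretic product of local bisections is again a local bisection. Moreover, in an \'etale category the multiplication map is a local homeomorphism, hence open; equivalently, the product of two open sets is open, so $AB$ is open. Combining, $AB$ is an open local bisection, and therefore, by Lemma~\ref{lem:partial-isometry} again, an open partial isometry.

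Putting these two ingredients together with Proposition~\ref{prop:pelosi} yields that $\Omega(C)$ is a restriction quantal frame. The only mildly delicate point, which I would spell out carefully, is the openness of $AB$ — one has to remember that \'etaleness of $C$ implies that $\mathbf{m}$ is a local homeomorphism, so $AB = \mathbf{m}(A \ast B)$ is open as the image of the open set $(A \times B) \cap (C \ast C)$ under an open map; everything else is formal from the two preceding lemmas.
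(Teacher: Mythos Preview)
Your proposal is correct and follows essentially the same approach as the paper: the paper's proof is the one-line remark preceding the proposition, citing Lemma~\ref{lem:base-base}, Lemma~\ref{lem:partial-isometry}, and the fact that the product of two open local bisections is an open local bisection. You simply unpack this, and in particular give the justification (via $\mathbf{m}$ being open) for why the product of open local bisections is open, which the paper leaves implicit.
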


We shall only deal with restriction quantal frames in what follows.
Let $S$ and $T$ be restriction quantal frames.
A {\em morphism} $\theta \colon S \rightarrow T$ 
of restriction quantal frames is a semigroup homomorphism that satisfies the following conditions:
\begin{enumerate}
\item $\theta$ preserves all joins.
\item $\theta$ preserves finite meets.
\item $\theta$ is a morphism of Ehresmann semigroups.
\item $\theta$ maps top elements to top elements, and is a monoid homomorphism.
\item $\theta$ maps partial isometries to partial isometries.
\end{enumerate}

\begin{lemma}\label{lem:maps} Let $\theta \colon C \rightarrow D$ be a continuous covering functor between \'etale categories.
Then $\theta^{-1}$ is a morphism $\Omega (D) \rightarrow \Omega (C)$ of restriction quantal frames.
\end{lemma}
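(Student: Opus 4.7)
The plan is to verify the five conditions in the definition of a morphism of restriction quantal frames in turn, exploiting continuity of $\theta$ for the order-theoretic conditions and the covering property for the algebraic ones. Since $\theta$ is continuous, $\theta^{-1}$ maps open sets of $D$ to open sets of $C$, and as the inverse image of a function it automatically preserves arbitrary unions and finite intersections, i.e.\ arbitrary joins and finite meets. It also sends $D$ to $C$, taking the top element of $\Omega(D)$ to that of $\Omega(C)$. So conditions (1), (2), and the top-element half of (4) come for free.

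The core work is to show that $\theta^{-1}$ preserves multiplication, the identity $D_o$, and the two Ehresmann operations. For multiplication, let $A,B \in \Omega(D)$. The inclusion $\theta^{-1}(A)\theta^{-1}(B) \subseteq \theta^{-1}(AB)$ is immediate from functoriality. For the reverse inclusion, take $c \in C$ with $\theta(c) = ab$ where $a\in A$, $b\in B$, $\mathbf{d}(a)=\mathbf{r}(b)$. Applying $\mathbf{d}$-surjectivity to the pair $(y,e) = (b, \mathbf{d}(c))$ yields $c_2 \in C$ with $\mathbf{d}(c_2)=\mathbf{d}(c)$ and $\theta(c_2)=b$; applying $\mathbf{d}$-surjectivity again to $(a, \mathbf{r}(c_2))$, which makes sense because $\theta(\mathbf{r}(c_2))=\mathbf{r}(b)=\mathbf{d}(a)$, yields $c_1$ with $\mathbf{d}(c_1)=\mathbf{r}(c_2)$ and $\theta(c_1)=a$. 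Then $c_1c_2$ is defined with $\theta(c_1c_2)=\theta(c)$ and $\mathbf{d}(c_1c_2)=\mathbf{d}(c)$, so by $\mathbf{d}$-injectivity $c=c_1c_2 \in \theta^{-1}(A)\theta^{-1}(B)$. For the identity, $\theta^{-1}(D_o)=C_o$ follows from Lemma~\ref{lem:covering-functors-identities} in one direction and from functoriality in the other.

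For condition (3), I would show $\theta^{-1}(A^{\ast}) = \theta^{-1}(A)^{\ast}$; the $+$ case is dual. The inclusion $\supseteq$ is automatic since $\theta(\mathbf{d}(c)) = \mathbf{d}(\theta(c))$. For $\subseteq$, take $c$ with $\theta(c) = \mathbf{d}(a)$ for some $a \in A$. Since $\mathbf{d}(a)$ is an identity, Lemma~\ref{lem:covering-functors-identities} forces $c$ to be an identity, so $c=\mathbf{d}(c)$. Now $\mathbf{d}$-surjectivity applied to $(a,c)$ lifts $a$ to some $c' \in \theta^{-1}(A)$ with $\mathbf{d}(c')=c$, giving $c \in \theta^{-1}(A)^{\ast}$.

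Finally, condition (5): by Lemma~\ref{lem:partial-isometry} this reduces to showing that $\theta^{-1}$ sends open local bisections to open local bisections. Openness is just continuity, and if $c_1,c_2 \in \theta^{-1}(A)$ share a $\mathbf{d}$-value, then $\theta(c_1),\theta(c_2)$ share a $\mathbf{d}$-value in the local bisection $A$, hence are equal; $\mathbf{d}$-injectivity of $\theta$ then forces $c_1=c_2$. The $\mathbf{r}$-argument is symmetric. The main obstacle throughout is the multiplication-preservation step, where one must carefully chain $\mathbf{d}$-surjectivity (to build the factorization) with $\mathbf{d}$-injectivity (to force uniqueness); everything else is a routine application of continuity together with Lemma~\ref{lem:covering-functors-identities} and Lemma~\ref{lem:partial-isometry}.
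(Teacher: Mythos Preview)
Your proof is correct and follows essentially the same approach as the paper's: both verify the five conditions directly, using continuity for the frame-theoretic properties, functoriality for one inclusion in each of the multiplication and Ehresmann-operation identities, and the covering property (via $\mathbf{d}$-surjectivity/injectivity together with Lemma~\ref{lem:covering-functors-identities} and Lemma~\ref{lem:partial-isometry}) for the nontrivial reverse inclusions. Your write-up is simply more explicit than the paper's, which leaves the chaining of $\mathbf{d}$-surjectivity and $\mathbf{d}$-injectivity in the multiplication step to the reader.
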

\begin{proof} Because $\theta$ is continuous, it follows that $\theta^{-1}$ is a well-defined function.
We prove next that $\theta^{-1}$ is a homomorphism of semigroups.
Let $U,V \in \Omega (D)$.
To prove that $\theta^{-1}(UV) \subseteq \theta^{-1}(U)\theta^{-1}(V)$,
we use the fact that $\theta$ is a covering functor.
To prove the reverse inclusion, we simply use the fact that $\theta$ is a functor.
Observe that $\theta^{-1}(D_{o})$ contains $C_{o}$ but must equal this set because $\theta$ is a covering functor.
We have therefore proved that $\theta^{-1}$ is a monoid homomorphism.
We have proved that the partial isometries in a a restriction quantal frame constructed from  an \'etale category
are precisely the open local bisections.
Using the fact that $\theta$ is a continuous covering functor, it is routine to check that $\theta^{-1}$ maps
open local bisections to open local bisections.
Observe that $\theta^{-1}$ is automatically a frame map and so preserves joins and finite meets.
It is immediate that $\theta^{-1}$ maps the top element to the top element.
It remains to show that $\theta^{-1}$ is a morphism of Ehresmann semigroups.
Let $U$ be an open set of $D$.
We prove that $\theta^{-1}(U^{\ast}) = \theta^{-1}(U)^{\ast}$.
We therefore have to prove that $\theta^{-1}(\mathbf{d}(U)) = \mathbf{d}(\theta^{-1}(U))$.
The inclusion $\theta^{-1}(\mathbf{d}(U)) \subseteq \mathbf{d}(\theta^{-1}(U))$
follows from the fact that $\theta$ is a covering functor.
The reverse inclusion simply uses the fact that $\theta$ is a functor. 
\end{proof}

Define $\Omega (\theta) = \theta^{-1}$.
By Proposition~\ref{prop:kemi} and Lemma~\ref{lem:maps},
we have proved the following.

\begin{proposition}\label{prop:functor-one} 
$\Omega$ is a contravariant functor from the category whose objects are the \'etale topological categories 
and whose arrows are the continuous covering functors to the category whose objects are the restriction quantal frames
and whose arrows are the morphisms of restriction quantal frames.
\end{proposition}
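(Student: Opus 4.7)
The main technical content has already been established: Proposition~\ref{prop:kemi} takes care of the assignment on objects, showing that $\Omega(C)$ is indeed a restriction quantal frame, and Lemma~\ref{lem:maps} takes care of the assignment on arrows, showing that $\Omega(\theta)=\theta^{-1}$ is a morphism of restriction quantal frames. So the remaining task is essentially bureaucratic: verify that the source is genuinely a category and that the assignment $\Omega$ respects identities and composition (in the contravariant sense).

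First I would check that \'etale topological categories together with continuous covering functors actually form a category. The identity functor on any \'etale category $C$ is trivially continuous and trivially $\mathbf{d}$- and $\mathbf{r}$-bijective. For composition, if $\theta \colon C \to D$ and $\varphi \colon D \to E$ are continuous covering functors, then $\varphi \circ \theta$ is continuous (composition of continuous maps). The $\mathbf{d}$-injectivity of $\varphi \circ \theta$ follows by chaining: if $(\varphi\theta)(x)=(\varphi\theta)(y)$ and $\mathbf{d}(x)=\mathbf{d}(y)$, then applying $\mathbf{d}$-injectivity of $\varphi$ at the identity $\theta(\mathbf{d}(x))=\theta(\mathbf{d}(y))$ gives $\theta(x)=\theta(y)$, and then $\mathbf{d}$-injectivity of $\theta$ gives $x=y$. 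A dual check works for $\mathbf{d}$-surjectivity (lift first through $\varphi$, then through $\theta$), and similarly for $\mathbf{r}$.

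Next I would verify functoriality in the contravariant sense. For any \'etale topological category $C$, we have $\Omega(\mathrm{id}_C) = (\mathrm{id}_C)^{-1} = \mathrm{id}_{\Omega(C)}$ since the preimage under the identity map is the identity on open sets. For composable continuous covering functors $\theta \colon C \to D$ and $\varphi \colon D \to E$, the standard set-theoretic identity $(\varphi \circ \theta)^{-1} = \theta^{-1} \circ \varphi^{-1}$ gives $\Omega(\varphi \circ \theta) = \Omega(\theta) \circ \Omega(\varphi)$, which is exactly the contravariance condition.

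There is no real obstacle here: Proposition~\ref{prop:kemi} and Lemma~\ref{lem:maps} do all the work, and everything else reduces to the elementary fact that inverse image is contravariantly functorial. The only thing one must not forget is to confirm that the composition of two continuous covering functors is again a continuous covering functor, so that composing the induced frame morphisms genuinely corresponds to applying $\Omega$ to a single morphism in the source category.
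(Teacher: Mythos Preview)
Your proposal is correct and matches the paper's approach: the paper simply states that the proposition follows from Proposition~\ref{prop:kemi} and Lemma~\ref{lem:maps}, treating the remaining functoriality checks as routine. You have spelled out those routine checks (closure of continuous covering functors under composition, and contravariance of inverse image) more explicitly than the paper does, but the substance is identical.
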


The following example provides good motivation for what we are doing.

\begin{example}{\em Let $X$ be a non-empty finite set.
Then $X \times X$ becomes a category when we define $\mathbf{d}(x,y) = (y,y)$, $\mathbf{r}(x,y) = (x,x)$
and $(x,y)(y,z) = (x,z)$.
With respect to the discrete topology $X \times X$ is an \'etale category (in fact, it is an \'etale groupoid). 
The set of all subsets of $X \times X$ is $\mathsf{P}(X \times X)$ and is just the set of all binary relations on $X$.
The partial isometries are those binary relations induced by partial bijections.
It is now clear why the usual order is not algebraically defined unless we are dealing with elements which are `functional'
in nature.}
\end{example}

\section{Restriction quantal frames}

In the previous section, we showed how to construct restriction quantal frames from \'etale topological categories.
In this section, we go in the opposite direction;
we shall construct an \'etale topological category $\mathsf{C}(Q)$ from a restriction quantal frame $Q$.
Throughout this section, we shall assume that we are working with a restriction quantal frame.
The top element of the restriction quantal frame will be denoted by $1_{Q}$ or, simply, $1$.
The monoid identity will be denoted by $e_{Q}$ or, simply, $e$.
The partial order in a restriction quantal frame will be denoted by $\leq$. 

\begin{remark}
{\em 
Observe that in a restriction quantal frame $Q$, 
every element of $Q$ is a join of partial isometries (this is stated in the paragraph before \cite[Proposition 2.40]{KL}).
To see why,
we may write $1 = \bigvee_{i \in I} a_{i}$ where the $a_{i}$ are partial isometries.
Now, let $x$ be an arbitrary element of $Q$.
Then $x \leq 1$.
It follows that $x = \bigvee_{i \in I} a_{i} \wedge x$ since we are working in a frame.
We now use the fact that the partial isometries form an order-ideal.}
\end{remark}

Let $Q$ be a frame.
A {\em filter} in $Q$ is a non-empty subset that is closed under binary meets and is closed upwards.
Let $A$ be a set that is closed upwards and is downwards directed.
Then, in fact, $A$ is a filter.
To see why,
let $a,b \in A$ and $c \leq a,b$ where $c \in A$.
Then $c \leq a \wedge b$.
Since the set $A$ is closed upwards, it follows that $a \wedge b \in A$.
A {\em proper filter} is a filter that does not contain the bottom element.
A proper filter $A$ is said to be {\em completely prime}
if $\bigvee_{i \in I} a_{i} \in A$ implies that $a_{i} \in A$ for some $i \in I$. 
Completely prime filters in frames correspond to {\em points} \cite{J}.

\begin{remark}
{\em In checking that a subset of a quantal frame is a filter
it is enough to check that it is closed upwards and downwardly directed.}
\end{remark}

Let $Q$ be a restriction quantal frame.
If $a \vee b$ is a partial isometry then so too are $a$ and $b$ since the set of partial isometries forms 
an order-ideal. However, this necessary condition is not sufficient.
To obtain that, we need the notion of compatibility.
Define $a \sim b$ if and only if $ab^{\ast} = ba^{\ast}$ and $b^{+}a = a^{+}b$.
We call $\sim$ the {\em compatibility relation}.
If $a \sim b$ we say that $a$ and $b$ are {\em compatible}.

\begin{lemma}\label{lem:compatibility} Let $Q$ be a restriction quantal frame, 
and suppose that $a$ and $b$ are partial isometries.
Then $a \vee b$ is a partial isometry if and only if $a \sim b$.
\end{lemma}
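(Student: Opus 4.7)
The plan is to prove both implications by exploiting the definition of partial isometry, the order-ideal property of the partial isometries (which the paper has already invoked from \cite[Lemma 2.29]{KL}), frame distributivity, and the commutativity of projections. The forward direction extracts the compatibility equations by applying the partial-isometry condition of $a \vee b$ to its subelements $a$ and $b$; the converse decomposes an arbitrary element below $a \vee b$ via the frame law and then invokes compatibility to absorb the cross terms.

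For ($\Rightarrow$), I would first apply the partial isometry property of $a \vee b$ to the inequality $a \leq a \vee b$: expanding $a = a^{+}(a \vee b)$ yields $a = a \vee a^{+}b$, so $a^{+}b \leq a$; expanding $a = (a \vee b)a^{\ast}$ yields $ba^{\ast} \leq a$. The symmetric step for $b \leq a \vee b$ gives $b^{+}a \leq b$ and $ab^{\ast} \leq b$. Next I would observe that $a^{+}b = a^{+}b^{+}b \leq b$ (since $a^{+}b^{+}\leq e$) and dually $b^{+}a \leq a$, so the partial isometry property of $a$ applied to $a^{+}b \leq a$ gives $a^{+}b = (a^{+}b)^{+}a = a^{+}b^{+}a$, after noting $(a^{+}b)^{+} = (a^{+}b^{+})^{+} = a^{+}b^{+}$. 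An analogous computation for $b^{+}a \leq b$ gives $b^{+}a = b^{+}a^{+}b$. Commutativity of projections and $b^{+}b = b$ then collapse both expressions to the same element, giving $a^{+}b = b^{+}a$. A parallel computation starting from $ab^{\ast}\leq b$ and using $(xy)^{\ast} = (x^{\ast}y)^{\ast}$ gives $ab^{\ast} = b(a^{\ast}b^{\ast}) = ba^{\ast}b^{\ast} = ba^{\ast}$, completing compatibility.

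For ($\Leftarrow$), the plan is to take an arbitrary $x \leq a \vee b$ and verify both $x = (a \vee b)x^{\ast}$ and $x = x^{+}(a \vee b)$. I would begin by using frame distributivity to write $x = x \wedge (a \vee b) = u \vee v$ where $u = x \wedge a$ and $v = x \wedge b$. Since the partial isometries form an order-ideal, $u$ and $v$ are partial isometries, whence $u = au^{\ast}$ and $v = bv^{\ast}$. Expanding then yields
$(a \vee b)x^{\ast} = (a \vee b)(u^{\ast} \vee v^{\ast}) = u \vee av^{\ast} \vee bu^{\ast} \vee v = x \vee av^{\ast} \vee bu^{\ast}$,
so it suffices to show $av^{\ast} \leq v$ and $bu^{\ast} \leq u$. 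For the first, $v \leq b$ gives $v^{\ast} \leq b^{\ast}$, hence $b^{\ast}v^{\ast} = v^{\ast}$, and compatibility rewrites $av^{\ast} = ab^{\ast}v^{\ast} = ba^{\ast}v^{\ast} \leq bv^{\ast} = v$; the second is symmetric. The identity $x^{+}(a \vee b) = x$ follows by the entirely dual calculation using the compatibility equation $a^{+}b = b^{+}a$.

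The main obstacle is the converse, where one must control an arbitrary $x$ below the join; the decomposition $x = (x \wedge a) \vee (x \wedge b)$ via frame distributivity is the right first move, and the real work is the absorption identity $av^{\ast} = ab^{\ast}v^{\ast} = ba^{\ast}v^{\ast} \leq v$, which is where the compatibility hypothesis $ab^{\ast} = ba^{\ast}$ earns its keep by turning a structural equation into the order-theoretic containment that makes the cross terms disappear. The forward direction, by contrast, is essentially a mechanical unwinding of the partial isometry condition applied to $a \vee b$.
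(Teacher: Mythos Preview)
Your proposal is correct. The forward direction is essentially the paper's argument: the paper derives $ba^{\ast}\leq a$ from $a=(a\vee b)a^{\ast}$ and then applies the partial isometry property of $a$ to get $ba^{\ast}=a(ba^{\ast})^{\ast}=ab^{\ast}$ directly, with the $+$-equation handled ``by symmetry''; your version runs the mirror computation from $ab^{\ast}\leq b$ and spells out the $+$-case a bit more, but the idea is the same.

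The converse takes a genuinely different route. The paper never decomposes $c$ via meets; instead it multiplies $c$ on the right by $a^{\ast}$: from $c\leq a\vee b$ one has $ca^{\ast}\leq aa^{\ast}\vee ba^{\ast}=a\vee ab^{\ast}\leq a$, and then the partial isometry property of $a$ gives $ca^{\ast}=a(ca^{\ast})^{\ast}=ac^{\ast}$. Symmetrically $cb^{\ast}=bc^{\ast}$, whence $c=c(a^{\ast}\vee b^{\ast})=(a\vee b)c^{\ast}$. That argument uses only quantale distributivity (multiplication over joins) and never needs the frame law or the order-ideal property of partial isometries. Your argument instead invokes frame distributivity to write $x=(x\wedge a)\vee(x\wedge b)$, uses the order-ideal property to know these pieces are partial isometries, and then absorbs the cross terms $av^{\ast},bu^{\ast}$ via compatibility. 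Both work; the paper's is marginally shorter and avoids the meet decomposition, while yours makes the interaction with the frame structure more visible.
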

\begin{proof} Suppose first that $a \sim b$. We prove that $a \vee b$ is a partial isometry.
Let $c \leq a \vee b$.
Then $c^{\ast} \leq a^{\ast} \vee b^{\ast}$.
Now, $ca^{\ast} \leq aa^{\ast} \vee ba^{\ast}$.
But $ba^{\ast} = ab^{\ast}$.
It follows that $ca^{\ast} \leq a$.
But $a$ is a partial isometry and so $ca^{\ast} = ac^{\ast}a^{\ast} = ac^{\ast}$.
Similarly, $cb^{\ast} = bc^{\ast}$.
Thus $c = c(a^{\ast} \vee b^{\ast}) = (a \vee b)c^{\ast}$.
The dual result delivers the fact that $a \vee b$ is a partial isometry.
We now prove the converse.
Suppose that $a \vee b$ is a partial isometry.
We  have that $a \leq a \vee b$.
Thus $a = a \vee ba^{\ast}$ and so $ba^{\ast} \leq a$.
But $a$ is a partial isometry.
It follows that $ba^{\ast} = ab^{\ast}$.
The dual result follows by symmetry and so $a$ and $b$ are compatible. 
\end{proof}

A restriction semigroup is said to be {\em complete} if every compatible subset has a join
and multiplication distributes over such joins.

\begin{proposition}\label{prop:pi-restriction-semigroup}
Let $Q$ be a restriction quantal frame.
Then the set of partial isometries of $Q$ forms a complete restriction monoid.
\end{proposition}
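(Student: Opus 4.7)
The plan is to verify, in order, that $\mathsf{PI}(Q)$ is closed under the relevant operations, that the Ehresmann/restriction axioms hold automatically, that compatible joins exist and remain in $\mathsf{PI}(Q)$, and finally that multiplication distributes over such joins.

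First I would check that $\mathsf{PI}(Q)$ is a restriction monoid. Closure under multiplication is the definition of a restriction quantal frame. The identity $e$ is a projection, hence a partial isometry. The operations $a \mapsto a^{\ast}$ and $a \mapsto a^{+}$ land in the projections, which are all partial isometries. Since the Ehresmann and restriction laws are equations that hold throughout $Q$, they automatically hold in the subset $\mathsf{PI}(Q)$, so the operations inherited from $Q$ make $\mathsf{PI}(Q)$ into a restriction monoid.

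The substantive content is completeness. Given a pairwise compatible family $\{a_{i}\}_{i \in I} \subseteq \mathsf{PI}(Q)$, form $a = \bigvee a_{i}$ in the frame $Q$; the task is to show that $a$ is a partial isometry. So take $b \leq a$ and prove $b = a b^{\ast}$ (the dual $b = b^{+}a$ is symmetric). Using frame distributivity, $b = b \wedge a = \bigvee_{i}(b \wedge a_{i})$, and because $a_{i}$ is already a partial isometry each summand satisfies $b \wedge a_{i} = a_{i}(b \wedge a_{i})^{\ast}$. From $b = b b^{\ast}$ and $b \leq a$ one gets $b \leq a b^{\ast}$ for free. For the reverse direction, quantale distributivity gives
\[
a b^{\ast} \;=\; \bigvee_{i} a_{i} b^{\ast} \;=\; \bigvee_{i,j} a_{i}(b \wedge a_{j})^{\ast}.
\]
The diagonal contributes $\bigvee_{i} a_{i}(b \wedge a_{i})^{\ast} = \bigvee_{i}(b \wedge a_{i}) = b$. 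For the off-diagonal terms, I would exploit pairwise compatibility in the form $a_{i} a_{j}^{\ast} = a_{j} a_{i}^{\ast}$ (which Lemma~\ref{lem:compatibility} and its proof make available), together with the fact that $(b \wedge a_{j})^{\ast} \leq a_{j}^{\ast}$ and that projections commute and satisfy $x f \leq x$ in a restriction semigroup. This chain of manipulations should yield $a_{i}(b \wedge a_{j})^{\ast} \leq b \wedge a_{j}$, so the off-diagonal contribution is absorbed into the diagonal sum and we obtain $a b^{\ast} \leq b$.

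Finally, distributivity of multiplication over compatible joins is inherited from the quantale structure, where multiplication distributes over arbitrary joins; I would only need to observe that if $\{a_{i}\}$ is compatible in $\mathsf{PI}(Q)$ with join $a \in \mathsf{PI}(Q)$, then for any $c \in \mathsf{PI}(Q)$ the set $\{c a_{i}\}$ lies below the partial isometry $c a$ and is hence itself compatible (any two partial isometries bounded above by a common partial isometry are compatible, via Lemma~\ref{lem:compatibility} applied inside the order-ideal $\mathsf{PI}(Q)$). The main obstacle is the off-diagonal calculation in the second paragraph: everything else is either a direct inheritance from $Q$ or a routine consequence of quantale distributivity, but the key identification $a_{i}(b \wedge a_{j})^{\ast} \leq b \wedge a_{j}$ is the place where the compatibility hypothesis does its real work.
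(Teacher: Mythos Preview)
There is a genuine gap in your first paragraph. You assert that ``the Ehresmann and restriction laws are equations that hold throughout $Q$'', but this is not so: a restriction quantal frame is by definition only an \emph{Ehresmann} quantal frame (with the additional hypotheses that $1$ is a join of partial isometries and that partial isometries multiply), and the restriction identities $fa = a(fa)^{\ast}$, $af = (af)^{+}a$ are \emph{not} assumed for arbitrary $a \in Q$. Indeed they fail already in the motivating example $\mathsf{P}(C)$ of Example~\ref{ex:example-one}: if $A$ contains two arrows with the same domain but different ranges, then $A(FA)^{\ast}$ can properly contain $FA$. So the restriction laws on $\mathsf{PI}(Q)$ are not inherited; they must be \emph{derived}.

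The derivation is exactly the content of the paper's proof, and it is one line: for a partial isometry $a$ and a projection $f$ we have $f \leq e$, hence $fa \leq a$, and now the \emph{defining property of a partial isometry} gives $fa = a(fa)^{\ast}$ (and dually $af = (af)^{+}a$). That is, the restriction identities on $\mathsf{PI}(Q)$ are a direct consequence of what ``partial isometry'' means, not of any ambient restriction structure on $Q$.

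Your treatment of completeness, on the other hand, is more explicit than the paper's (which simply declares it ``straightforward'') and is essentially correct. The off-diagonal computation can be tightened slightly: from $(b \wedge a_{j})^{\ast} \leq a_{j}^{\ast}$ one gets $a_{i}(b \wedge a_{j})^{\ast} = a_{i}a_{j}^{\ast}(b \wedge a_{j})^{\ast} = a_{j}a_{i}^{\ast}(b \wedge a_{j})^{\ast}$ by compatibility, and since $a_{i}^{\ast}(b \wedge a_{j})^{\ast} \leq (b \wedge a_{j})^{\ast}$ and $a_{j}$ is a partial isometry this is $\leq a_{j}(b \wedge a_{j})^{\ast} = b \wedge a_{j} \leq b$, which is the bound you want.
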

\begin{proof} Let $a$ be any element and $f$ a projection.
Then $fa \leq a$.
But $a$ is a partial isometry.
It follows that $fa = a(fa)^{\ast}$.
The dual result also holds.
This show that the set of partial isometries forms a restriction monoid.
It is now straightforward to check that it is in fact a complete restriction monoid.  
\end{proof}

\noindent
{\bf Definition. }Let $Q$ be a restriction quantal frame.
Denote the set of completely prime filters of $Q$ by $\mathsf{C}(Q)$.\\

We shall study this set.
We begin by relating completely prime filters in $Q$ to those in $e^{\downarrow}$.
Our first result shows how to map completely prime filters in  $e^{\downarrow}$
to completely prime filters in $Q$.

\begin{lemma}\label{lem:walpole} \mbox{}
\begin{enumerate}
\item Let $F$ be a completely prime filter in $e^{\downarrow}$.
Then $F^{\uparrow}$ is a completely prime filter in $Q$.
\item Let $F$ and $G$ be completely prime filters in  $e^{\downarrow}$.
Then $F^{\uparrow} = G^{\uparrow}$ if and only if $F = G$.
\end{enumerate}
\end{lemma}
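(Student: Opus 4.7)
The plan is to verify the four defining properties of a completely prime filter for $F^{\uparrow}$ in $Q$ for part (1), and then in part (2) exploit the fact that $F$ and $G$ are upward closed \emph{within} $e^{\downarrow}$.

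For part (1), closure upwards in $Q$ is immediate from the definition of $F^{\uparrow}$. For downward directedness, given $x, y \in F^{\uparrow}$, choose $f, g \in F$ with $f \leq x$ and $g \leq y$; since $F$ is a filter in $e^{\downarrow}$, the meet $f \wedge g$ (taken in $e^{\downarrow}$, which coincides with the meet in $Q$ because $e^{\downarrow}$ is an order-ideal of the frame) lies in $F$ and sits below both $x$ and $y$. By the remark just before the lemma, this is enough to conclude that $F^{\uparrow}$ is a filter. Properness is also quick: $0 \in F^{\uparrow}$ would force some $f \in F$ with $f \leq 0$, hence $f = 0$, contradicting properness of $F$.

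The one substantive step is complete primeness. Suppose $\bigvee_{i \in I} a_i \in F^{\uparrow}$, so there exists $f \in F$ with $f \leq \bigvee_i a_i$. Using frame distributivity in $Q$,
$$f = f \wedge \bigvee_{i \in I} a_i = \bigvee_{i \in I} (f \wedge a_i),$$
and each $f \wedge a_i \leq f \leq e$ lies in $e^{\downarrow}$. Because $F$ is completely prime in $e^{\downarrow}$, some $f \wedge a_i$ belongs to $F$; since $f \wedge a_i \leq a_i$ we obtain $a_i \in F^{\uparrow}$, as required.

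Part (2) is then a one-line argument. The nontrivial direction: if $F^{\uparrow} = G^{\uparrow}$, take any $f \in F$; then $f \in G^{\uparrow}$, so some $g \in G$ satisfies $g \leq f$, and since $f \in e^{\downarrow}$ and $G$ is closed upwards within $e^{\downarrow}$, we get $f \in G$. Symmetry gives $F = G$; the converse is trivial.

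The only place care is needed is the interplay between the two notions of upward closure: the one used to form $F^{\uparrow}$ lives in $Q$, whereas the upward closure used to conclude $f \in G$ in part (2) is the one intrinsic to $e^{\downarrow}$. Beyond that the main ingredient is simply frame distributivity, used to push the hypothesis on a join into $e^{\downarrow}$.
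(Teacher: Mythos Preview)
Your proof is correct and follows essentially the same route as the paper's. The paper skips the routine verifications (upward closure, directedness, properness) and goes straight to complete primeness, which you handle by the identical frame-distributivity step $f = \bigvee_i (f \wedge a_i)$; your part (2) is likewise the same argument, phrased in terms of upward closure of $G$ in $e^{\downarrow}$ where the paper says ``both $f$ and $g$ are projections.''
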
 
\begin{proof} (1) The only part of the proof that needs any comment is the following.
Suppose that $\bigvee_{i \in I} x_{i} \in F^{\uparrow}$.
Then there is a projection $f \in F$ such that $f \leq \bigvee_{i \in I} x_{i}$.
Thus, using the fact that we are working in a frame, we have that $f =  \bigvee_{i \in I} (x_{i} \wedge f)$.
But the set of projections is an order-ideal.
Thus each $x_{i} \wedge f$ is a projection.
It follows that $x_{i} \wedge f \in F$ for some $i$.
But $x_{i} \wedge f \leq x_{i}$ and so $x_{i} \in F^{\uparrow}$. 

(2) Only one direction needs proving. 
Suppose that $F^{\uparrow} = G^{\uparrow}$ 
Let $f \in F$.
Then $f \in F^{\uparrow}$ and so $f \in G^{\uparrow}$. 
There is therefore $g \in G$ such that $g \leq f$.
But both $f$ and $g$ are projections.
It follows that $f \in G$.
We have therefore proved that $F \subseteq G$.
The proof of the reverse inclusion follows by symmetry.
\end{proof}

The next result describes the completely prime filters that arise in $Q$ according to the procedure described in Lemma~\ref{lem:walpole}.

\begin{lemma}\label{lem:pitt} The completely prime filters in $Q$ which are of the form
$F^{\uparrow}$, for some completely prime filter $F$ in $e^{\downarrow}$, are the completely prime filters of $Q$ that contain projections.
\end{lemma}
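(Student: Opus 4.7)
The plan is to prove the two directions separately. The forward direction is essentially immediate: if $F$ is a completely prime filter in $e^{\downarrow}$ then $F$ itself consists entirely of projections, so $F^{\uparrow}$ obviously contains projections (indeed, all the elements of $F$).

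For the harder direction, let $G$ be a completely prime filter in $Q$ containing some projection $e_{0}$. I would set $F = G \cap e^{\downarrow}$ and show that (i) $F$ is a completely prime filter in $e^{\downarrow}$, and (ii) $F^{\uparrow} = G$. The filter axioms for $F$ rest on two ingredients that are already in the text: the set of projections is an order-ideal (so meets of projections are projections, and a projection below something in $F$ lands in $F$), and in an Ehresmann quantal frame the join of projections is again a projection (since $(\bigvee f_{i})^{\ast} = \bigvee f_{i}^{\ast} = \bigvee f_{i}$, because $\ast$ is a join-map and $f_{i}^{\ast} = f_{i}$). These facts imply that joins computed in $e^{\downarrow}$ agree with joins in $Q$; consequently, if $\bigvee_{i} f_{i} \in F$ (a join of projections), then by complete primality of $G$ some $f_{i} \in G$, and since $f_{i}$ is a projection, $f_{i} \in F$.

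The inclusion $F^{\uparrow} \subseteq G$ is immediate from upward closure of $G$. The reverse inclusion $G \subseteq F^{\uparrow}$ is the step I expect to be the one deserving a line of thought; the trick is to use the projection $e_{0} \in G$ as a \emph{pivot}. Given any $x \in G$, the meet $x \wedge e_{0}$ lies in $G$ (since $G$ is closed under binary meets) and satisfies $x \wedge e_{0} \leq e_{0}$, so by the order-ideal property $x \wedge e_{0}$ is a projection. Thus $x \wedge e_{0} \in F$, and since $x \wedge e_{0} \leq x$, we conclude $x \in F^{\uparrow}$.

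Finally, one should note that the hypothesis "contains a projection" is genuinely used only to produce the pivot $e_{0}$; the rest of the argument is automatic from the order-ideal property of $\mathsf{P}$ and from $\ast$ being a join-map. The main obstacle, as indicated, is spotting the pivot trick in the reverse inclusion; everything else amounts to tracking how joins and meets of projections behave in the ambient frame $Q$.
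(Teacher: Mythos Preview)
Your proof is correct and follows essentially the same route as the paper: you set $F = G \cap e^{\downarrow}$ and use the pivot $x \wedge e_{0}$ to show $G \subseteq F^{\uparrow}$, exactly as the paper does with $a \wedge f$. In fact you are more thorough than the paper, which omits the forward direction entirely and dismisses the check that $F$ is a completely prime filter in $e^{\downarrow}$ as ``straightforward,'' whereas you supply the relevant details (joins of projections are projections via the join-map property of $\ast$).
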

\begin{proof} Let $A$ be a completely prime filter in $Q$ which contains a projection $f$.
Put $F = A \cap e^{\downarrow}$, a non-empty set by assumption.
Clearly, $F^{\uparrow} \subseteq A$.
We prove the reverse inclusion.
Let $a \in A$.
Then $a \wedge f \in A$.
But $a \wedge f$ is a projection since the set of projections forms an order-ideal.
On the other hand, $a \wedge f \leq a$.
It follows that $a \in F^{\uparrow}$.
It is now straightforward to check that $F$ is a completely prime filter in $e^{\downarrow}$.
\end{proof}

\begin{remark}
{\em Observe that in completely prime filters which contain projections,
every element is above a projection.}
\end{remark}

A completely prime filter of $Q$ that contains a projection will be called an {\em identity}.

\begin{lemma}\label{lem:thanos} Let $A$ be a completely prime filter in a restriction quantal frame $Q$.
Then, $A$ is an identity filter  if and only if $a^{\ast} \in A$ for any $a \in A$ (and, dually).
\end{lemma}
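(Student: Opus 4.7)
The plan is to prove both directions using only elementary filter properties together with two facts already established in the excerpt: (i) the unary operations $a \mapsto a^{*}$ and $a \mapsto a^{+}$ are join-maps on $Q$ (hence order-preserving) and are the identity on projections; and (ii) from the proof of Lemma~\ref{lem:pitt} together with the remark immediately following it, if a filter $A$ contains a projection then every element of $A$ lies above a projection in $A$.

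For the forward direction, suppose $A$ is an identity filter, so some projection $f \in A$. Let $a \in A$ be arbitrary. Then $a \wedge f \in A$ since $A$ is closed under binary meets, and $a \wedge f \leq f$ forces $a \wedge f$ to be a projection because the projections form an order-ideal. Set $g = a \wedge f$, a projection in $A$ with $g \leq a$. Applying $*$ (order-preserving, fixing projections) gives $g = g^{*} \leq a^{*}$; since $A$ is closed upwards, $a^{*} \in A$. The dual inequality, using that $+$ is a join-map fixing projections, shows $a^{+} \in A$ similarly.

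For the converse, $A$ is a (proper) filter, hence non-empty. Choose any $a \in A$. By hypothesis, $a^{*} \in A$, and $a^{*}$ is a projection, so $A$ contains a projection and is therefore an identity filter. The dual argument uses any $a \in A$ with $a^{+} \in A$.

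I do not anticipate a real obstacle here: once one spots that the remark after Lemma~\ref{lem:pitt} provides a projection below every element of an identity filter, the forward direction is just applying $*$ to that inequality, and the converse is immediate because $a^{*}$ is by definition a projection. The only thing to be careful about is to avoid invoking complete primeness (it is not needed), and to flag that the dual statement for $+$ is proved by the symmetric argument rather than by re-deriving it.
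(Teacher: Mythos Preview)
Your proof is correct and follows essentially the same approach as the paper's own proof: in the forward direction the paper also picks a projection $f' \in A$ with $f' \leq a$ (your choice $f' = a \wedge f$ makes this explicit) and then uses $f' = (f')^{\ast} \leq a^{\ast}$ together with upward closure, while the converse is declared immediate in the paper just as you argue it. Your observation that complete primeness plays no role is accurate and worth noting.
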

\begin{proof} Suppose that $A$ contains a projection $f$.
Let $a \in A$.
Then there is a projection $f' \in A$ such that $f' \leq a,f$.
It follows that $f' \leq a^{\ast}$.
We have therefore proved that $a^{\ast} \in A$.
The proof of the converse is immediate. 
\end{proof}

The following generalizes Lemma~\ref{lem:walpole} and Lemma~\ref{lem:pitt}.
It connects identity completely prime filters in $Q$ with completely prime filters in $e^{\downarrow}$.

\begin{proposition}\label{prop:bijection-cp} Let $Q$ be a restriction quantal frame.
Then there is an order isomorphism between the set of identity completely prime filters in $Q$
and the set of completely prime filters in $e^{\downarrow}$.
\end{proposition}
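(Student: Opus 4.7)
The proposition essentially bundles Lemmas \ref{lem:walpole} and \ref{lem:pitt} into a single statement, so the plan is largely bookkeeping: exhibit the two maps, check they are mutually inverse, and check both are order preserving.

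First I would define the forward map $\Phi$ sending a completely prime filter $F \subseteq e^{\downarrow}$ to $F^{\uparrow} \subseteq Q$. By Lemma \ref{lem:walpole}(1) the image $F^{\uparrow}$ is a completely prime filter in $Q$, and since any $f \in F$ is a projection lying in $F^{\uparrow}$, this filter is an identity filter in the sense defined just before Lemma \ref{lem:thanos}. So $\Phi$ lands in the set of identity completely prime filters of $Q$. Order preservation for $\Phi$ is immediate: if $F \subseteq G$ then $F^{\uparrow} \subseteq G^{\uparrow}$.

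Next I would define the candidate inverse $\Psi$ sending an identity completely prime filter $A$ of $Q$ to $\Psi(A) = A \cap e^{\downarrow}$. The last sentence of the proof of Lemma \ref{lem:pitt} already observes that this is a completely prime filter of $e^{\downarrow}$ (nonemptiness uses the hypothesis that $A$ contains a projection; closure under binary meets and upward closure inside $e^{\downarrow}$ are inherited from $A$; complete primeness follows because any join in $e^{\downarrow}$ is also a join in $Q$ and belongs to $A$, so some joinand lies in $A \cap e^{\downarrow}$). Again, $\Psi$ is order preserving because intersection with a fixed set is.

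To see $\Psi \circ \Phi = \mathrm{id}$, let $F$ be a completely prime filter in $e^{\downarrow}$; then $F \subseteq F^{\uparrow} \cap e^{\downarrow}$ is clear, and the argument in Lemma \ref{lem:walpole}(2) (every projection above an element of $F$ already lies in $F$) gives the reverse inclusion. To see $\Phi \circ \Psi = \mathrm{id}$, let $A$ be an identity completely prime filter and let $F = A \cap e^{\downarrow}$; the proof of Lemma \ref{lem:pitt} shows precisely that $A = F^{\uparrow}$, the key point being that for any $a \in A$ and projection $f \in A$ one has $a \wedge f \in A$ with $a \wedge f$ a projection (projections form an order ideal) and $a \wedge f \leq a$.

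There is no real obstacle here — every ingredient has already been proved. The only thing to be careful about is to check that $\Psi(A)$ really is completely prime (not just a filter), but this is immediate because the joins in $e^{\downarrow}$ and in $Q$ agree on subsets of $e^{\downarrow}$, so complete primeness of $A$ in $Q$ restricts to complete primeness of $\Psi(A)$ in $e^{\downarrow}$.
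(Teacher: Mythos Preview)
Your proposal is correct and follows essentially the same approach as the paper: both define the mutually inverse maps $F \mapsto F^{\uparrow}$ and $A \mapsto A \cap e^{\downarrow}$, verify they are order preserving, and check the two composite identities using the content of Lemmas~\ref{lem:walpole} and~\ref{lem:pitt}. The paper's version is simply more terse, asserting $A = (A \cap e^{\downarrow})^{\uparrow}$ and $F = F^{\uparrow} \cap e^{\downarrow}$ without repeating the details you spell out.
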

\begin{proof} Let $A$ be an identity completely prime filter of $Q$.
Then $A \cap e^{\downarrow}$ is a completely prime filter in $e^{\downarrow}$.
If $A \subseteq B$ then $A \cap e^{\downarrow} \subseteq B \cap e^{\downarrow}$.
On the other hand, if $F$ is a completely prime filter of  $e^{\downarrow}$
then $F^{\uparrow}$ is a completely prime filter of $Q$.
If $F \subseteq G$ then $F^{\uparrow} \subseteq G^{\uparrow}$.
Observe that $A = (A \cap e^{\downarrow})^{\uparrow}$
and $F = F^{\uparrow} \cap e^{\downarrow}$.
It follows that we have the order isomorphism as claimed.
\end{proof}

In our next result, we show two ways to construct completely prime filters in $e^{\downarrow}$ from completely prime filters in $Q$.
Let $A$ be a completely prime filter in $Q$.
Define 
$$A^{\ast} = \{a^{\ast} \colon a \in A\} \,\text{and}\, A^{+} = \{a^{+} \colon a \in A\}.$$

\begin{lemma}\label{lem:callghan} 
Let $A$ be a completely prime filter in $Q$. 
Then $A^{\ast}$ and $A^{+}$ are completely prime filters in $e^{\downarrow}$.
\end{lemma}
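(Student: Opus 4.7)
The plan is to verify the filter axioms and complete primeness for $A^{\ast}$, after which the statement for $A^{+}$ follows by a symmetric argument using the dual Ehresmann identities. Throughout I would exploit three features of a restriction quantal frame that have already been recorded in the excerpt: the operations $a \mapsto a^{\ast}$ and $a \mapsto a^{+}$ are join-maps (hence order-preserving) that act as the identity on $\mathsf{P} = e^{\downarrow}$; multiplication distributes over arbitrary joins; and, by the remark preceding this lemma, a subset of a frame that is closed upwards and downwards directed is automatically a filter.

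Containment $A^{\ast} \subseteq e^{\downarrow}$ is immediate from the definition of $a^{\ast}$. For downward-directedness, given $a^{\ast},b^{\ast} \in A^{\ast}$ the meet $a \wedge b$ lies in $A$, and monotonicity of $\ast$ gives $(a \wedge b)^{\ast} \leq a^{\ast}, b^{\ast}$, furnishing a common lower bound in $A^{\ast}$. The subtler part is upward closure within $e^{\downarrow}$: given $a^{\ast} \leq g \leq e$ I must produce some $b \in A$ with $b^{\ast} = g$. The trick is to take $b = a \vee g$; upward closure of $A$ puts $b$ in $A$, and since $\ast$ preserves joins and fixes projections,
\[ b^{\ast} = a^{\ast} \vee g^{\ast} = a^{\ast} \vee g = g. \]
With both directedness and upward closure in hand, the cited remark yields that $A^{\ast}$ is a filter. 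Properness follows because $0 \in A^{\ast}$ would mean $a^{\ast} = 0$ for some $a \in A$, whence $a = aa^{\ast} = 0$ (multiplication absorbs $0$), contradicting properness of $A$.

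For complete primeness, suppose $\bigvee_{i \in I} f_{i} \in A^{\ast}$, so that some $a \in A$ satisfies $a^{\ast} = \bigvee_{i} f_{i}$. The pivotal computation is
\[ a \;=\; a a^{\ast} \;=\; a\Bigl(\bigvee_{i} f_{i}\Bigr) \;=\; \bigvee_{i} a f_{i}, \]
using distributivity of multiplication over joins. Since $a \in A$ and $A$ is completely prime, some $a f_{i}$ lies in $A$. Applying the Ehresmann identity $(xy)^{\ast} = (x^{\ast}y)^{\ast}$, the inequality $f_{i} \leq a^{\ast}$, and the fact that $\ast$ fixes projections,
\[ (a f_{i})^{\ast} \;=\; (a^{\ast} f_{i})^{\ast} \;=\; f_{i}^{\ast} \;=\; f_{i}, \]
so $f_{i} \in A^{\ast}$ as required. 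The verification for $A^{+}$ is obtained by interchanging the roles of $\ast$ and $+$ and using $a^{+}a = a$ in place of $aa^{\ast} = a$.

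The step that requires the most thought is the upward-closure argument, since it is not the typical ``pick a witness'' move but rather a small construction: joining a witness in $A$ with a target projection to manufacture an element whose star is exactly the target. Once that device is spotted, the rest is mechanical application of the Ehresmann and quantal axioms.
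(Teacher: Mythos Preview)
Your argument is correct. The overall structure---verify properness, downward directedness, upward closure, then complete primeness---matches the paper, but the specific devices differ in two places. For upward closure in $e^{\downarrow}$, the paper multiplies the target projection by the top element: from $a^{\ast}\leq f$ one observes $a = aa^{\ast}\leq 1f$, so $1f\in A$ and $(1f)^{\ast}=f$. You instead join the witness with the target, taking $b=a\vee g$ and using that $\ast$ preserves joins. For complete primeness the paper again routes through the top element, writing $a\leq\bigvee_i 1f_i$ and extracting $1f_i\in A$, whereas you stay with $a$ itself and write $a=\bigvee_i af_i$. Your versions avoid invoking the top element $1$ and rely only on the join-preservation of $\ast$ and the identity $aa^{\ast}=a$; the paper's versions are uniform in that the same ``multiply by $1$'' trick handles both steps. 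Either route is short and entirely adequate.
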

\begin{proof} We prove the statement for $A^{\ast}$ since the proof for $A^{+}$ is similar.
Observe that $0 \in A^{\ast}$ is and only if $a^{\ast} = 0$ for some $a \in A$ which implies that $a = 0$.
By assumption, the filter $A$ is proper and so $0 \notin A^{\ast}$.
Supppse that $a^{\ast} \subseteq f$ where $a \in A$ and $f$ is a projection.
Clearly, $a \subseteq 1$ thus $a \subseteq 1f$.
It follows that $1f \in A$.
Thus $(1f)^{\ast} = f \in A^{\ast}$.
We have proved that $A^{\ast}$ is closed upwards in $e^{\downarrow}$.
Let $a^{\ast},b^{\ast} \in A^{\ast}$ where $a,b \in A$.
By assumption, there exists $c \in A$ such that $c \leq a,b$.
But then $c^{\ast} \leq a^{\ast},b^{\ast}$.
We have therefore proved that $A^{\ast}$ is a proper filter in $e^{\downarrow}$.
We now prove that it is completely prime.
Let $\bigvee_{i \in I} f_{i} \in A^{\ast}$.
Then $a^{\ast} \leq \bigvee_{i \in I} f_{i}$ for some $a \in A$.
Clearly, $a \leq 1$.
It follows that $a \subseteq  \bigvee_{i \in I} 1f_{i}$.
We now use the fact that $A$ is a completely prime filter to deduce that $1f_{i} \in A$ for some $i \in I$.
It follows that $f_{i} \in A^{\ast}$.
\end{proof}

Define
$$\mathbf{d}(A) = (A^{\ast})^{\uparrow} \, \text{ and } \, \mathbf{r}(A) = (A^{+})^{\uparrow}.$$
By Lemma~\ref{lem:pitt} and Lemma~\ref{lem:callghan}, these are both completely prime filters in $Q$.
The proof of the following is straightforward.

\begin{lemma}\label{lem:heath-pm} Let $A$ and $B$ be completely prime filters.
Then $A^{\ast} = B^{+}$ if and only if $\mathbf{d}(A) = \mathbf{r}(B)$.
\end{lemma}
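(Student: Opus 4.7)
The equivalence reduces to a simple observation: the maps $A \mapsto A^{\ast}$ and $B \mapsto B^{+}$ are proven (in Lemma~\ref{lem:callghan}) to land in the set of completely prime filters of $e^{\downarrow}$, while $\mathbf{d}(A)$ and $\mathbf{r}(B)$ are just the upward closures in $Q$ of $A^{\ast}$ and $B^{+}$, respectively. So the whole statement is really about the behaviour of the upward-closure operation $F \mapsto F^{\uparrow}$ on completely prime filters of $e^{\downarrow}$.

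The plan is as follows. For the ``only if'' direction, assume $A^{\ast} = B^{+}$. Then trivially $(A^{\ast})^{\uparrow} = (B^{+})^{\uparrow}$, which by definition gives $\mathbf{d}(A) = \mathbf{r}(B)$. No further work is needed here.

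For the ``if'' direction, assume $\mathbf{d}(A) = \mathbf{r}(B)$, that is, $(A^{\ast})^{\uparrow} = (B^{+})^{\uparrow}$. By Lemma~\ref{lem:callghan}, both $A^{\ast}$ and $B^{+}$ are completely prime filters in $e^{\downarrow}$. Now apply Lemma~\ref{lem:walpole}(2), which asserts that the assignment $F \mapsto F^{\uparrow}$ is injective on completely prime filters of $e^{\downarrow}$. Conclude that $A^{\ast} = B^{+}$.

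There is no genuine obstacle: the content of the lemma has already been distilled into Lemmas~\ref{lem:walpole} and \ref{lem:callghan}, so the proof amounts to citing them. The only thing to be a little careful about is remembering that $A^{\ast}$ and $B^{+}$ are subsets of $e^{\downarrow}$ (not of $Q$) so that Lemma~\ref{lem:walpole}(2) is applicable; this is precisely what Lemma~\ref{lem:callghan} guarantees.
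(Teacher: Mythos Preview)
Your proposal is correct and matches the paper's intent: the paper itself declares the proof ``straightforward'' and gives no details, and your argument is precisely the natural unpacking of that word, invoking Lemma~\ref{lem:callghan} to place $A^{\ast}$ and $B^{+}$ inside $e^{\downarrow}$ and Lemma~\ref{lem:walpole}(2) for the injectivity of $F \mapsto F^{\uparrow}$.
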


We now show how to construct all completely prime filters in $Q$ from those in $e^{\downarrow}$.

\begin{lemma}\label{lem:thatcher}\mbox{} 
\begin{enumerate}
\item Let $A$ be a completely prime filter with $a \in A$ arbitrary. Then $aA^{\ast} \subseteq A$, and dually.
\item Let $A$ be a completely prime filter. If $a \in A$ is a partial isometry then $A = (aA^{\ast})^{\uparrow} = (a \mathbf{d}(A))^{\uparrow}$, and dually.
\item Let $A$ be an identity completely prime filter in $Q$.
Suppose that $a^{\ast} \in A$ and that $a$ is a partial isometry.
Then $B = (aA)^{\uparrow}$ is a completely prime filter in $Q$ such that $\mathbf{d}(B) = A$.
\item Let $A$ and $B$ be completely prime filters such that $A$ and $B$ have a partial isometry in common and $\mathbf{d}(A) = \mathbf{d}(B)$.
Then $A = B$.
\end{enumerate}
\end{lemma}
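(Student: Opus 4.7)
The four parts stack, and my strategy is to handle (1) by a monotonicity trick, use (1) plus the partial-isometry property for (2), treat (3) as the main construction (which is the real obstacle), and deduce (4) as a one-line corollary of (2).

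For (1), given $a, b \in A$, let $c \in A$ witness downward directedness, so $d := a \wedge b \in A$ by upward closure. Since $*$ is a join-map and hence order preserving, $d^* \leq b^*$; since multiplication is monotone in a quantale, the restriction identity $d = dd^*$ yields $d \leq ab^*$, so $ab^* \in A$. The dual $A^{+}b \subseteq A$ is symmetric. For (2), the inclusion $(aA^*)^{\uparrow} \subseteq A$ is just (1). For the reverse, pick any $b \in A$ and a lower bound $c \in A$ of $\{a,b\}$; the partial-isometry hypothesis on $a$ turns $c \leq a$ into $c = ac^*$, so $c \in aA^*$ and $b \geq c$ puts $b \in (aA^*)^{\uparrow}$. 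Equality with $(a\mathbf{d}(A))^{\uparrow}$ follows from $A^* \subseteq \mathbf{d}(A)$ in one direction and (1) in the other. Part (4) is then immediate: if $a$ is a partial isometry in $A \cap B$ then by (2), $A = (a\mathbf{d}(A))^{\uparrow} = (a\mathbf{d}(B))^{\uparrow} = B$.

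Part (3) is the real obstacle. The technical engine I will rely on is the identity
\[
(a(x \wedge a^*))^* = x \wedge a^* \quad \text{for any } x \in Q,
\]
which I will extract from the Ehresmann rule $(ay)^* = (a^*y)^*$, together with the meet-semilattice structure on projections (giving $a^*(x \wedge a^*) = x \wedge a^*$) and the fact that $*$ fixes projections. With this identity in hand, set $B = (aA)^{\uparrow}$. Non-emptiness follows from $a = aa^* \in aA$ (using $a^* \in A$); upward closure is by construction; downward directedness uses that if $z \leq x, y$ in $A$ then $az \leq ax, ay$. For properness: if $ax = 0$ then $a(x \wedge a^*) = 0$, and applying $*$ forces $x \wedge a^* = 0$, contradicting $0 \notin A$. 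For complete primeness, suppose $\bigvee_i b_i \geq ax_0$ for some $x_0 \in A$. Put $c = a(x_0 \wedge a^*) \leq a$; frame distributivity gives $c = \bigvee_i (c \wedge b_i)$, and since $*$ preserves joins,
\[
x_0 \wedge a^* = c^* = \bigvee_i (c \wedge b_i)^* \in A.
\]
Complete primeness of $A$ then produces $i$ with $(c \wedge b_i)^* \in A$; since $c \wedge b_i \leq a$ and $a$ is a partial isometry, $c \wedge b_i = a(c \wedge b_i)^* \in aA$, so $b_i \geq c \wedge b_i$ lies in $B$.

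It remains to check $\mathbf{d}(B) = A$, i.e., $(B^*)^{\uparrow} = A$. For $A \subseteq (B^*)^{\uparrow}$: each $y \in A$ satisfies $y \geq y \wedge a^* = (a(y \wedge a^*))^* \in B^*$, using $y \wedge a^* \in A$ and $a(y \wedge a^*) \in aA \subseteq B$. Conversely, given $b \in B$ with $b \geq ay$ for some $y \in A$, replacing $y$ by $y \wedge a^* \in A$ gives $b \geq a(y \wedge a^*)$, hence $b^* \geq y \wedge a^* \in A$, so $b^* \in A$ and any $p \geq b^*$ is in $A$. The main conceptual step throughout (3) is recognizing that the ``correct'' projection to cut $x$ down to is $x \wedge a^*$: this is what makes the Ehresmann identities collapse and lets complete primeness of $A$ pass through to $B$.
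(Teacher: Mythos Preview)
Your proof is correct and follows essentially the same route as the paper. Parts (1), (2), and (4) match the paper's arguments almost verbatim. For part (3), the paper is terser---it simply asserts that $B$ is a proper filter and that $\mathbf{d}(B)=A$ is ``straightforward'', and for complete primeness it replaces the element of $A$ by a projection $u$ (using that identity filters have a projection below every element) before running the same frame-distributivity and partial-isometry argument you give. Your device of passing from $x$ to the projection $x\wedge a^{\ast}$ and isolating the identity $(a(x\wedge a^{\ast}))^{\ast}=x\wedge a^{\ast}$ is exactly the mechanism the paper uses implicitly; you have just made it explicit and used it uniformly for properness, complete primeness, and both inclusions of $\mathbf{d}(B)=A$.
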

\begin{proof} (1) Let $b \in B$ and let $c \in A$ be such that $c \subseteq a,b$.
Then $c = cc^{\ast} \subseteq ab^{\ast}$.
It follows that $ab^{\ast} \in A$.

(2) Let $x \in A$.
There exists $c \in A$ such that $c \leq x,a$.
But $a$ is a partial isometry and so $c = ac^{\ast}$.
Thus $ac^{\ast} \leq x$.
We have therefore proved that $A = (aA^{\ast})^{\uparrow}$.
It is easy to prove that $(aA^{\ast})^{\uparrow} = (a \mathbf{d}(A))^{\uparrow}$.

(3) It is easy to check that $B$ is a proper filter.
We prove that it is completely prime.
Suppose that $\bigvee x_{i} \in B$.
Then $au \leq  \bigvee x_{i}$ for some $u \in A$;
we may assume that $u$ is a projection.
Thus $au =  \bigvee (x_{i} \wedge au)$. 
Observe that $x_{i} \wedge au \leq au$.
Since $au$ is a partial isometry so too is $x_{i} \wedge au$.
It follows that $x_{i} \wedge au = au(x_{i} \wedge au)^{\ast}$ for each $i$.
Observe that $(x_{i} \wedge au)^{\ast} \in A$ for some $i$
Thus $x_{i} \wedge au \in B$ and so $x_{i} \in B$.
The proof of the final assertion is straightforward.

(4) Immediate by part (2) above.
\end{proof}

\begin{remark}{\em 
The above lemmas establish that in a restriction quantal frame all
completely prime filters are of the form $(aA)^{\ast}$ where $a$ is a partial isometry,
$a^{\ast} \in A$ and $A \cap e^{\downarrow}$ is a completely prime filter in $e^{\downarrow}$. 
}
\end{remark}

We are now ready to prove that $\mathsf{C}(Q)$ is a category.
Let $A$ and $B$ be completely prime filters in $Q$.
If $A^{\ast} = B^{+}$ define $A \cdot B = (AB)^{\uparrow}$; otherwise, the product is not defined.

\begin{lemma}\label{lem:truss} 
With the above definition, $A \cdot B$ is a completely prime filter.
\end{lemma}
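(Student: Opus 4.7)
My plan is to identify $A \cdot B$ with a completely prime filter produced by Lemma~\ref{lem:thatcher}(3). Choose partial isometries $a_0 \in A$ and $b_0 \in B$; these exist because every element of a restriction quantal frame is a join of partial isometries, so complete primality of $A$ and $B$ applied to any element of each yields a partial isometry in each filter. Set $c = a_0 b_0$, again a partial isometry since partial isometries are closed under multiplication. Using the Ehresmann identity, $c^{\ast} = (a_0^{\ast} b_0)^{\ast}$; since $a_0^{\ast} \in A^{\ast} = B^{+}$, the dual of Lemma~\ref{lem:thatcher}(1) gives $a_0^{\ast} b_0 \in B$, so $c^{\ast} \in B^{\ast} \subseteq \mathbf{d}(B)$. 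Thus $c$ and the identity completely prime filter $\mathbf{d}(B)$ satisfy the hypotheses of Lemma~\ref{lem:thatcher}(3), which makes $(c\,\mathbf{d}(B))^{\uparrow}$ a completely prime filter. It therefore suffices to prove
\[
A \cdot B \;=\; (c\,\mathbf{d}(B))^{\uparrow}.
\]

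For $(c\,\mathbf{d}(B))^{\uparrow} \subseteq A \cdot B$: given $u \in \mathbf{d}(B)$, pick a projection $g \in B^{\ast}$ with $g \leq u$; then $b_0 g \in B$ by Lemma~\ref{lem:thatcher}(1), so $b_0 u \in B$ by upward closure, and $cu = a_0(b_0 u) \in AB$.

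For the reverse inclusion, take $x \in A$ and $y \in B$. By filter closure under meets, $x' := x \wedge a_0 \in A$ and $y' := y \wedge b_0 \in B$, with $x' \leq a_0$ and $y' \leq b_0$. Since $a_0$ and $b_0$ are partial isometries, $x' = a_0 (x')^{\ast}$ and $y' = (y')^{+} b_0$. Setting $f := (x')^{\ast}(y')^{+}$, a projection in the filter $A^{\ast} = B^{+}$, one computes
\[
xy \;\geq\; x'y' \;=\; a_0 f b_0 \;=\; a_0 b_0 (fb_0)^{\ast} \;=\; c\,(fb_0)^{\ast},
\]
the third equality being the partial isometry property of $b_0$ applied to $fb_0 \leq b_0$. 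Since $f \in B^{+}$ gives $fb_0 \in B$ by the dual of Lemma~\ref{lem:thatcher}(1), we conclude $(fb_0)^{\ast} \in B^{\ast} \subseteq \mathbf{d}(B)$, and so $xy \in (c\,\mathbf{d}(B))^{\uparrow}$. The main obstacle is precisely this reverse inclusion: rewriting an arbitrary composable pair $xy \in AB$ into the canonical form $c \cdot u$ with $u \in \mathbf{d}(B)$ requires a double application of the partial isometry rewriting rule $z = z^{+}a = az^{\ast}$ for $z \leq a$ --- once on each side to land on $a_0 f b_0$, and once more to shuttle the intermediate projection $f$ through $b_0$ onto the right.
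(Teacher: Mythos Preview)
Your argument is correct, and it takes a genuinely different route from the paper. The paper verifies the four defining properties of a completely prime filter for $(AB)^{\uparrow}$ directly: properness is checked via the Ehresmann identity $(ab)^{\ast}=(a^{\ast}b)^{\ast}$ and the hypothesis $A^{\ast}=B^{+}$; downward directedness is obvious; and complete primality is obtained by choosing partial isometries $a\in A$, $b\in B$, writing $ab=\bigvee_i ab(ab\wedge x_i)^{\ast}$ via the partial-isometry rule, and then pushing $\bigvee$ through $(-)^{\ast}$ to land inside the completely prime filter $B^{\ast}$.

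You instead identify $A\cdot B$ with the already-built completely prime filter $(c\,\mathbf{d}(B))^{\uparrow}$ of Lemma~\ref{lem:thatcher}(3). This is more economical: you never re-prove complete primality or properness from scratch, and as a bonus you obtain $\mathbf{d}(A\cdot B)=\mathbf{d}(B)$ for free, which the paper establishes separately in Proposition~\ref{prop:one}. The price is the double partial-isometry rewrite in the reverse inclusion, which you carry out correctly (the key being that $(x')^{\ast}$ and $(y')^{+}$ both lie in the filter $A^{\ast}=B^{+}$, so their product~$f$ does too, whence $fb_0\in B$ by the dual of Lemma~\ref{lem:thatcher}(1)). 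The paper's approach is more self-contained, while yours makes clearer structural use of the canonical form $(aF)^{\uparrow}$ for completely prime filters established just before.
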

\begin{proof} Suppose that $0 \in A \cdot B$.
Then $ab \leq 0$ where $a \in A$ and $b \in B$.
It follows that $ab = 0$.
Thus $(ab)^{\ast} = 0$.
That is $(a^{\ast}b)^{\ast} = 0$.
But $a^{\ast} = b_{1}^{+}$ for some $b_{1} \in B$.
It follows that $(b_{1}^{+}b)^{\ast} = 0$.
But $b_{1}^{+}b \in B$.
It follows that there is an element $b_{2} \in B$ such that $b_{2}^{\ast} = 0$.
But then $b_{2} = 0$ and $B$ is a proper filter by assumption, so that we get a contradiction.
It follows that $0 \notin A \cdot B$.
By definition, $A \cdot B$ is upwardly closed.
We prove that $A \cdot B$ is downwards directed.
Let $x,y \in A \cdot B$.
Then $a_{1}b_{1} \leq x$ and $a_{2}b_{2} \leq y$ for some $a_{1}, a_{2} \in A$ and $b_{1}, b_{2} \in B$.
Let $a \in A$ be such that $a \leq a_{1},b_{1}$ and let $b \in B$ be such that $b \leq b_{1},b_{2}$.
It follows that $ab \leq a_{1}b_{1}, a_{2}b_{2}$ and so $ab \leq x,y$.
We prove that $A \cdot B$ is completely prime.
Suppose that $\bigvee_{i \in I} x_{i} \in A \cdot B$.
Then $ab \leq \bigvee_{i \in I} x_{i}$ where, without loss of generality, we may assume
that $a$ and $b$ are partial isometries and so their product is a partial isometry.
It follows that $ab = \bigvee_{i \in I} (ab \wedge x_{i})$.
Observe that $ab \wedge x_{i} \leq ab$, where $ab$ is a partial isometry.
It follows that $ab \wedge x_{i} = ab(ab \wedge x_{i})^{\ast}$ for each $i \in I$.
We now use the fact that $\ast$ is sup-preserving, 
to deduce that
$(ab)^{\ast} = \bigvee_{i \in I} (ab \wedge x_{i})^{\ast}$. 
But $(ab)^{\ast} = (a^{\ast}b)^{\ast} = (b_{1}^{+}b)^{\ast}$ using the fact that $A^{\ast} = B^{+}$.
We have that $b_{1}^{+}b \in B$ by Lemma~\ref{lem:thatcher}.
Thus $(b_{1}^{+}b)^{\ast} \in B^{\ast}$ which is a completely prime filter in $e^{\downarrow}$ by
Lemma~\ref{lem:callghan}.
It follows that $(ab \wedge x_{i})^{\ast} \in B^{\ast}$ for some $i \in I$.
Now, $b \in B$ and $(ab \wedge x_{i})^{\ast} \in B^{\ast}$ and so by Lemma~\ref{lem:thatcher},
we have that $b(ab \wedge x_{i})^{\ast} \in B$.
Thus $ab(ab \wedge x_{i})^{\ast} \in AB$.
We have therefore proved that $ab \wedge x_{i} \in AB$
and so $x_{i} \in (AB)^{\uparrow}$.
\end{proof}

The proof of the following is strightforward. 

\begin{lemma}\label{lem:wellington} Let $Q$ be a restriction quantal frame.
Then $A \cdot \mathbf{d}(A) = A$ and $\mathbf{r}(A) \cdot A = A$.
\end{lemma}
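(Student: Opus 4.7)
The plan is to derive both equalities by assembling the structural results already proved about completely prime filters, with Lemma~\ref{lem:thatcher} doing the main work; by symmetry it suffices to handle $A \cdot \mathbf{d}(A) = A$.

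First I would verify that the composite $A \cdot \mathbf{d}(A)$ is defined, that is, that $A^{\ast} = \mathbf{d}(A)^{+}$. By Lemma~\ref{lem:pitt} and Lemma~\ref{lem:callghan}, $\mathbf{d}(A) = (A^{\ast})^{\uparrow}$ is an identity completely prime filter, and for such a filter $B$ one checks directly that $B^{+}$ coincides with $B \cap e^{\downarrow}$, whence $\mathbf{r}(B) = B$ by Proposition~\ref{prop:bijection-cp}. Applied with $B = \mathbf{d}(A)$ and combined with Lemma~\ref{lem:heath-pm}, this yields $A^{\ast} = \mathbf{d}(A)^{+}$, so the product equals $(A\mathbf{d}(A))^{\uparrow}$.

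For $A \subseteq A \cdot \mathbf{d}(A)$, I would use the fact that $1 \in A$ and $1$ is a join of partial isometries (by the \'etale hypothesis on $Q$), so that complete primeness of $A$ supplies a partial isometry $a \in A$; Lemma~\ref{lem:thatcher}(2) then gives $A = (a\mathbf{d}(A))^{\uparrow} \subseteq (A\mathbf{d}(A))^{\uparrow}$. For the reverse inclusion, given $y \geq ax$ with $a \in A$ and $x \in \mathbf{d}(A) = (A^{\ast})^{\uparrow}$, I would pick $b \in A$ with $b^{\ast} \leq x$, observe $ab^{\ast} \leq ax \leq y$, and invoke Lemma~\ref{lem:thatcher}(1) to conclude $ab^{\ast} \in aA^{\ast} \subseteq A$, so that $y \in A$ by upward closure. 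The identity $\mathbf{r}(A) \cdot A = A$ is then handled by the dual argument.

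No step looks like a genuine obstacle; the only delicate point is that the first inclusion quietly depends on complete primeness of $A$ together with the \'etale hypothesis on $Q$ to extract a partial isometry from $A$, which is precisely what makes Lemma~\ref{lem:thatcher}(2) available.
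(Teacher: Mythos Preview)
Your argument is correct and is exactly the kind of verification the paper has in mind when it declares the proof ``straightforward'': you assemble the pieces from Lemmas~\ref{lem:callghan}, \ref{lem:heath-pm}, and~\ref{lem:thatcher} in the natural way. The paper gives no further detail, so there is nothing to compare against; your write-up simply makes explicit what the paper leaves implicit.
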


\begin{proposition}\label{prop:one} Let $Q$ be an restriction quantal frame.
Then $(\mathsf{C}(Q),\cdot)$ is a category, the identities of which are the identity completely prime filters.
\end{proposition}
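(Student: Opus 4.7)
The plan is to verify the four pieces of data that make $(\mathsf{C}(Q),\cdot)$ a category: the partial product is closed in $\mathsf{C}(Q)$, it is associative, each element has two-sided identities under $\mathbf{d}$ and $\mathbf{r}$, and these identities are precisely the identity completely prime filters. Closure and the identity laws are essentially quotations from the preceding material. Lemma~\ref{lem:truss} gives that $A \cdot B$ is again a completely prime filter whenever $A^{\ast} = B^{+}$, and by Lemma~\ref{lem:heath-pm} this composability condition is the expected $\mathbf{d}(A) = \mathbf{r}(B)$. Lemma~\ref{lem:wellington} supplies the identity laws $A \cdot \mathbf{d}(A) = A = \mathbf{r}(A) \cdot A$. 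Since $A^{\ast}$ and $A^{+}$ consist of projections, their upward closures $\mathbf{d}(A)$ and $\mathbf{r}(A)$ sit among the identity completely prime filters by Lemma~\ref{lem:pitt}.

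Next I would characterize the identities of the category as exactly the identity completely prime filters. If $E$ is an identity completely prime filter, Lemma~\ref{lem:thanos} yields $E^{\ast} \subseteq E$, so $(E^{\ast})^{\uparrow} \subseteq E$; conversely, every $x \in E$ lies above some projection $g \in E$ with $g = g^{\ast} \in E^{\ast}$, so $x \in (E^{\ast})^{\uparrow}$. Hence $\mathbf{d}(E) = E$, and dually $\mathbf{r}(E) = E$, so $E$ is a categorical identity. Conversely, any $E$ with $\mathbf{d}(E) = E$ contains some element of the form $a^{\ast}$, which is a projection, so $E$ is an identity completely prime filter.

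The only nontrivial point, and therefore the main obstacle, is associativity. Suppose $A \cdot B$ and $B \cdot C$ are both defined. I would first check that both $(A \cdot B) \cdot C$ and $A \cdot (B \cdot C)$ are defined by verifying $\mathbf{d}(A \cdot B) = \mathbf{d}(B)$ and $\mathbf{r}(B \cdot C) = \mathbf{r}(B)$. This reduces to a short computation: for any $a \in A$ and $b \in B$, $a^{\ast} \in B^{+}$ forces $a^{\ast}b \leq b$, so $(ab)^{\ast} = (a^{\ast}b)^{\ast} \leq b^{\ast}$, which shows that $(A \cdot B)^{\ast}$ and $B^{\ast}$ generate the same upward closure in $e^{\downarrow}$. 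For the equality $(A \cdot B) \cdot C = A \cdot (B \cdot C)$, since multiplication in a quantal frame is order-preserving,
\[
(A \cdot B) \cdot C \;=\; \{\, y \in Q : \exists\, a \in A,\, b \in B,\, c \in C \text{ with } (ab)c \leq y \,\},
\]
with a symmetric description for $A \cdot (B \cdot C)$. Associativity of the semigroup product in $Q$ then identifies both sets with the upward closure of $\{abc : a \in A,\, b \in B,\, c \in C\}$, completing the proof.
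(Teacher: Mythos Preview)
Your proposal is correct and follows essentially the same route as the paper: both establish $\mathbf{d}(A\cdot B)=\mathbf{d}(B)$ and $\mathbf{r}(A\cdot B)=\mathbf{r}(A)$ to get well-definedness of triple products, reduce associativity to $(ABC)^{\uparrow}$ via monotonicity of the quantale product, and invoke Lemma~\ref{lem:wellington} for the identity laws. The only minor difference is in the converse identification of identities: the paper argues that a categorical identity $B$ must equal $\mathbf{d}(A)$ by composing with $\mathbf{d}(A)$ and using associativity, whereas you use the characterisation $\mathbf{d}(E)=E$; your version needs the one-line observation that a categorical identity $E$ satisfies $E=E\cdot\mathbf{d}(E)=\mathbf{d}(E)$, which you leave implicit but which is immediate from Lemma~\ref{lem:wellington}.
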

\begin{proof} Observe that $(A \cdot B)^{\ast} = B^{\ast}$ and $(A \cdot B)^{+} = A^{+}$.
It follows that $\mathbf{d}(A \cdot B) = \mathbf{d}(B)$ and $\mathbf{r}(A \cdot B) = \mathbf{r}(A)$.
From this we see that $(A \cdot B) \cdot C$ is defined if and only if $A \cdot (B \cdot C)$ is defined.
It is routine to check that we obtain the same answer in either case.
We now locate the identities of this category.
Suppose first that $F^{\uparrow}$ is a completely prime filter that contains projections by Lemma~\ref{lem:pitt}
and that $A \cdot F^{\uparrow}$ is defined.
Then $(A^{\ast})^{\uparrow} = F^{\uparrow}$.
Thus $A^{\ast} = F$.
It follows that $F^{\uparrow} = \mathbf{d}(A)$ and so $A \cdot F^{\uparrow} = A$.
It follows that all completely prime filters that contain projections are identities.
Now suppose that $B$ is an identity and that $A \cdot B$ is defined.
Then $A \cdot B = A$.
But $A \cdot \mathbf{d}(A)$ is defined and so, by associativity,
$\mathbf{d}(A) \cdot B$ is defined.
We have already proved that $\mathbf{d}(A)$ is an identity.
It follows that $\mathbf{d}(A) = B$.
\end{proof}

We shall now endow  $\mathsf{C}(Q)$ with a topology that will make it into an \'etale category.\\

\noindent
{\bf Definition. }Let $Q$ be a restriction quantal frame.
If $a$ is any element of $Q$, define $\mathscr{X}_{a}$ to be the set of all completely prime filters that contain $a$.\\

\begin{lemma}\label{lem:merkel-one} Let $Q$ be a restriction quantal frame with top element $1$ and identity $e$.
\begin{enumerate}
\item $\mathscr{X}_{1}$ is the set of all completely prime filters.
\item $\mathscr{X}_{e}$ is the set of all identity completely prime filters.
\item $\bigcup_{i \in I} \mathscr{X}_{a_{i}} = \mathscr{X}_{\bigvee_{i \in I} a_{i}}$.
\item $\mathscr{X}_{a} \cap \mathscr{X}_{b} = \mathscr{X}_{a \wedge b}$.
\end{enumerate}
\end{lemma}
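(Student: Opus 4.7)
The plan is to verify the four items essentially from the defining properties of completely prime filters, exploiting the filter axioms (upward closure, closure under binary meets) and the complete primeness condition for joins; no part should require more than a line or two once the setup of Section 4 is in place.

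For item (1), I would note that every completely prime filter is by hypothesis non-empty and upward-closed, and $1$ is the top element, so $1$ lies in every such filter; the converse is trivial since $\mathscr{X}_1$ is a subset of the set of all completely prime filters by definition. For item (2), the key observation is that by definition of an Ehresmann quantale, $e^{\downarrow} = \mathsf{P}$, so $e$ itself is a projection. Hence any completely prime filter containing $e$ is an identity filter in the sense of the paragraph after Lemma~\ref{lem:pitt}. Conversely, if $A$ is an identity filter then it contains some projection $f \leq e$, and upward closure gives $e \in A$.

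For item (3), one direction uses upward closure of filters: if $A$ contains some $a_i$, then since $a_i \leq \bigvee_{i \in I} a_i$ we get $\bigvee_{i \in I} a_i \in A$, so $A \in \mathscr{X}_{\bigvee a_i}$. The other direction is exactly the complete primeness hypothesis: if $\bigvee_{i \in I} a_i \in A$, then some $a_i \in A$. For item (4), one direction uses closure under binary meets (a filter property), while the reverse uses upward closure together with $a \wedge b \leq a$ and $a \wedge b \leq b$.

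None of these steps presents any real obstacle; the only thing worth flagging is that item (2) implicitly requires the identification of the word ``identity'' in two senses (element of the monoid versus the class of completely prime filters defined after Lemma~\ref{lem:pitt}), and the bridge between them is precisely the condition $e^{\downarrow} = \mathsf{P}$ from the definition of an Ehresmann quantale. Once that is pointed out, the proof reduces to a brief enumeration of the four items.
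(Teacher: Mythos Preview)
Your proposal is correct and takes essentially the same approach as the paper's proof, which is even terser than yours: the paper simply gestures at ``the top element is above every element'' for (1), ``precisely the identity completely prime filters'' for (2), ``the defining property of completely prime filters'' for (3), and ``filters are downwards directed'' for (4). Your explicit unpacking of (2) via $e \in e^{\downarrow} = \mathsf{P}$ and your separation of the two directions in (3) and (4) are exactly the details the paper leaves implicit.
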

\begin{proof} (1) The top element is above every element and so every completely prime filter contains the top element.

(2) The completely prime filters containing the monoid identity are prescisely the identity completely prime filters.

(3) This relies on the defining property of completely prime filters.

(4) This relies on the fact that as filters, completely prime filters are downwards directed. 
\end{proof}

Put $\tau = \{\mathscr{X}_{a} \colon a \in \mathsf{PI}(Q)\}$.
Observe that $\tau$ consists of those elements $\mathscr{X}_{a}$ where $a$ is a partial isometry.

\begin{lemma}\label{lem:base}
$\tau$ is the base for a topology on $\mathsf{C}(Q)$.
\end{lemma}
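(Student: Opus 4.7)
The plan is to verify the two standard base axioms: (i) $\tau$ covers $\mathsf{C}(Q)$, and (ii) for any $\mathscr{X}_a,\mathscr{X}_b\in\tau$ and any $A\in\mathscr{X}_a\cap\mathscr{X}_b$, some member of $\tau$ containing $A$ sits inside $\mathscr{X}_a\cap\mathscr{X}_b$. Both follow by combining the formulas recorded in Lemma~\ref{lem:merkel-one} with two structural facts already available: the \'etaleness of $Q$ and the order-ideal property of $\mathsf{PI}(Q)$.

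For the covering axiom, I would take an arbitrary $A\in\mathsf{C}(Q)$ and use the definition of \'etaleness, which gives $1=\bigvee_{i\in I} a_i$ with each $a_i\in\mathsf{PI}(Q)$. Any filter is non-empty and upward closed, hence contains $1$; by complete primeness of $A$, some $a_i\in A$, so $A\in\mathscr{X}_{a_i}\in\tau$. Thus $\bigcup_{a\in\mathsf{PI}(Q)}\mathscr{X}_a=\mathsf{C}(Q)$.

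For the intersection axiom, Lemma~\ref{lem:merkel-one}(4) gives the identity $\mathscr{X}_a\cap\mathscr{X}_b=\mathscr{X}_{a\wedge b}$. Since $a\wedge b\leq a$ and $a\in\mathsf{PI}(Q)$, and since the partial isometries form an order-ideal by \cite[Lemma 2.29]{KL}, the meet $a\wedge b$ is itself a partial isometry. Therefore $\mathscr{X}_{a\wedge b}$ already belongs to $\tau$, and one may simply take $c=a\wedge b$ as the required basic neighbourhood of $A$. In fact this shows $\tau$ is closed under finite intersections, a stronger condition than merely being a base.

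I do not anticipate a real obstacle here; everything reduces to invoking the right earlier results. The only point worth flagging is that the argument needs $\tau$ to be a genuine base rather than a subbase, which is precisely why one must see that $\mathscr{X}_a\cap\mathscr{X}_b$ is again of the form $\mathscr{X}_c$ with $c\in\mathsf{PI}(Q)$; and this is exactly where the order-ideal property of $\mathsf{PI}(Q)$ earns its keep.
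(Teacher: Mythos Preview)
Your proof is correct and follows essentially the same route as the paper's: both arguments show that every completely prime filter contains a partial isometry (you via $1=\bigvee a_i$ and complete primeness, the paper stating this directly), and both handle intersections by observing that $a\wedge b$ is again a partial isometry because $\mathsf{PI}(Q)$ is an order-ideal. The only cosmetic difference is that you cite Lemma~\ref{lem:merkel-one}(4) for the identity $\mathscr{X}_a\cap\mathscr{X}_b=\mathscr{X}_{a\wedge b}$, whereas the paper unwinds this inline.
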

\begin{proof} Let $A$ be an arbitrary completely prime filter.
Then $A$ contains a partial isometry, $a$ say.
Thus $A \in \mathscr{X}_{a}$.
Let $\mathscr{X}_{a} \cap \mathscr{X}_{b}$ have a non-empty intersection.
Let $A \in \mathscr{X}_{a} \cap \mathscr{X}_{b}$.
Then $a,b \in A$.
It follows that $a \wedge b \in A$.
But $a \wedge b \leq a$ and $a$ is a partial isometry and so $a \wedge b$ is a partial isometry
since the partial isometries form an order-ideal.
Observe that $A \in \mathscr{X}_{a \wedge b} \subseteq \mathscr{X}_{a} \cap \mathscr{X}_{b}$.
We have therefore proved that $\tau$ is a base for a topology on $\mathsf{C}(Q)$.
\end{proof}

We endow $\mathsf{C}(Q)$ with the topology that has $\tau$ as a base;
this is the only topology we shall consider on $\mathsf{C}(Q)$.

\begin{lemma}\label{lem:open-sets} 
Let $Q$ be a restriction quantal frame.
Then $\mathscr{X}_{a}$ is an open set for any $a$.
\end{lemma}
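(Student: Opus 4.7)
The plan is to reduce $\mathscr{X}_a$ to a union of basic open sets. By the remark at the start of Section~4, every element of a restriction quantal frame is a join of partial isometries. So I would first write $a = \bigvee_{i \in I} a_{i}$ where each $a_{i}$ is a partial isometry; this is exactly the fact derived there from the $\acute{\text{e}}$tale property $1 = \bigvee_{j} b_{j}$ combined with $a = \bigvee_{j} (b_{j} \wedge a)$ and the order-ideal property of partial isometries.

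Second, I would invoke Lemma~\ref{lem:merkel-one}(3) to rewrite
\[
\mathscr{X}_{a} \;=\; \mathscr{X}_{\bigvee_{i \in I} a_{i}} \;=\; \bigcup_{i \in I} \mathscr{X}_{a_{i}}.
\]
Since each $a_{i}$ is a partial isometry, each $\mathscr{X}_{a_{i}}$ belongs to $\tau$, the designated base for the topology on $\mathsf{C}(Q)$ described just before Lemma~\ref{lem:base}. A union of basic open sets is open, so $\mathscr{X}_{a}$ is open.

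There is essentially no obstacle here: the heavy lifting was done in the remark (every element is a sup of partial isometries) and in Lemma~\ref{lem:merkel-one}(3) (the operator $\mathscr{X}_{-}$ converts joins to unions). The only thing worth flagging is that one must cite the remark rather than silently assume the decomposition, since the definition of an $\acute{\text{e}}$tale Ehresmann quantal frame only guarantees that $1$ (not every element) is a join of partial isometries.
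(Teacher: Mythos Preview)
Your proposal is correct and follows essentially the same route as the paper's own proof: decompose $a$ as a join of partial isometries and conclude that $\mathscr{X}_{a}=\bigcup_{i}\mathscr{X}_{a_{i}}$ is a union of basic open sets. The only cosmetic difference is that the paper re-derives the identity $\mathscr{X}_{\bigvee_i a_i}=\bigcup_i\mathscr{X}_{a_i}$ inline from the completely-prime property instead of citing Lemma~\ref{lem:merkel-one}(3), and it splits off the trivial case where $a$ is already a partial isometry.
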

\begin{proof} If $a$ is a partial isometry then the result follows by definition.
Suppose that $a$ is not a partial isometry.
Then $a = \bigvee_{i \in I} a_{i}$ where the $a_{i}$ are partial isometries.
Any completely prime filter containing $a$ must contain at least one of the $a_{i}$.
On the other hand, any completely prime filter containing one of the $a_{i}$
must contain $a$.
It follows that 
$$\mathscr{X}_{a} = \bigcup_{i \in I} \mathscr{X}_{a_{i}}.$$
\end{proof}

\begin{lemma}\label{lem:topology-cat} Let $Q$ be a restriction quantal frame.
Then $\mathsf{C}(Q)$ is a topological category.
\end{lemma}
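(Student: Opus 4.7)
The plan is to verify continuity of each of the three structure maps $\mathbf{d}$, $\mathbf{r}$ and $\mathbf{m}$ by testing against the base $\tau = \{\mathscr{X}_{a} \colon a \in \mathsf{PI}(Q)\}$. Throughout, two tools do the real work: every completely prime filter $A$ contains \emph{some} partial isometry (because $1 = \bigvee_{i} a_{i}$ with $a_{i} \in \mathsf{PI}(Q)$, and $1 \in A$ forces some $a_{i} \in A$ by complete primeness), and downwards-directedness of $A$ combined with the order-ideal property of $\mathsf{PI}(Q)$ lets us replace any $b \in A$ by a partial isometry $b' \in A$ with $b' \leq b$.

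For continuity of $\mathbf{d}$, fix $a \in \mathsf{PI}(Q)$ and consider $\mathbf{d}^{-1}(\mathscr{X}_{a}) = \{A \colon a \in (A^{\ast})^{\uparrow}\}$, i.e.\ the set of $A$ for which there exists $b \in A$ with $b^{\ast} \leq a$. I would prove
$$\mathbf{d}^{-1}(\mathscr{X}_{a}) \;=\; \bigcup\bigl\{\mathscr{X}_{c} \colon c \in \mathsf{PI}(Q),\ c^{\ast} \leq a\bigr\}.$$
The inclusion $\supseteq$ is immediate from $c \in A \Rightarrow c^{\ast} \in A^{\ast} \Rightarrow a \in (A^{\ast})^{\uparrow}$. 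For $\subseteq$, given $A$ with witness $b \in A$ and a partial isometry $c_{0} \in A$, downwards-direct to find $c \in A$ with $c \leq b, c_{0}$; then $c \in \mathsf{PI}(Q)$ and $c^{\ast} \leq b^{\ast} \leq a$. The argument for $\mathbf{r}$ is dual.

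For continuity of $\mathbf{m}$, fix $c \in \mathsf{PI}(Q)$ and take $(A,B) \in \mathbf{m}^{-1}(\mathscr{X}_{c})$, i.e.\ a composable pair with $c \in (AB)^{\uparrow}$. Pick $a_{0} \in A$, $b_{0} \in B$ with $a_{0}b_{0} \leq c$, and refine each coordinate, as above, to partial isometries $a \in A$, $b \in B$ with $a \leq a_{0}$ and $b \leq b_{0}$, so $ab \leq c$. Because $Q$ is a restriction quantal frame, $\mathsf{PI}(Q)$ is closed under multiplication, so $ab \in \mathsf{PI}(Q)$. The set $U = (\mathscr{X}_{a} \times \mathscr{X}_{b}) \cap \bigl(\mathsf{C}(Q) \ast \mathsf{C}(Q)\bigr)$ is basic open in the subspace topology on the domain and contains $(A,B)$. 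For any composable $(A',B') \in U$ we have $a \in A'$, $b \in B'$, hence $ab \in A'B'$, and $c \in (A'B')^{\uparrow}$ by upward closure; thus $U \subseteq \mathbf{m}^{-1}(\mathscr{X}_{c})$, proving openness.

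The main obstacle is the multiplication: one has to recognise that the product of the refinements must itself be a partial isometry in order to play the role of a basis element, and this is exactly the point at which the hypothesis that $Q$ is a \emph{restriction} quantal frame (rather than merely \'etale) is used. Everything else reduces to routine bookkeeping with the completely-prime-filter axioms already established in Lemmas~\ref{lem:walpole}--\ref{lem:thatcher}.
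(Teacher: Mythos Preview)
Your argument is correct and follows essentially the paper's route: for $\mathbf{m}$ you produce exactly the cover $\bigcup_{bc \leq s} (\mathscr{X}_{b} \times \mathscr{X}_{c})$ (restricted to composable pairs) that the paper writes down, and for $\mathbf{d}$ you give a slightly more direct description $\mathbf{d}^{-1}(\mathscr{X}_{a}) = \bigcup \{\mathscr{X}_{c} : c \in \mathsf{PI}(Q),\ c^{\ast} \leq a\}$, whereas the paper first passes to the largest projection $f \leq a$ and then shows $\mathbf{d}^{-1}(\mathscr{X}_{f}) = \bigcup_{af \neq 0} \mathscr{X}_{af}$. Both descriptions are unions of basic open sets and neither is hard; your version avoids the case analysis on whether a nonzero projection sits below $a$.

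One remark on your closing commentary: the sentence ``$ab \in \mathsf{PI}(Q)$'' is true but plays no role in your own argument, and your claim that this is ``exactly the point at which the hypothesis that $Q$ is a \emph{restriction} quantal frame \dots\ is used'' is not accurate for this lemma. Your neighbourhood $U = (\mathscr{X}_{a} \times \mathscr{X}_{b}) \cap (\mathsf{C}(Q) \ast \mathsf{C}(Q))$ is basic open because $a$ and $b$ are partial isometries; the product $ab$ never needs to index a basic set, and the inclusion $U \subseteq \mathbf{m}^{-1}(\mathscr{X}_{c})$ only uses $ab \leq c$ together with upward closure. The closure of $\mathsf{PI}(Q)$ under multiplication is genuinely needed earlier, in the proof that $A \cdot B$ is completely prime (Lemma~\ref{lem:truss}), but not here.
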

\begin{proof} We prove first that the mutiplication map $\mathbf{m}$ is continuous.
Let $s$ be a partial isometry.
Then
$$\mathbf{m}^{-1}(\mathscr{X}_{s}) = \left(\bigcup_{0 \neq bc \leq s} \mathscr{X}_{b} \times \mathscr{X}_{c} \right) \cap \left(\mathsf{C}(Q) \ast \mathsf{C}(Q)\right)$$
where $b$ and $c$ are partial isometries.
It remains to prove that the maps $\mathbf{d}$ and $\mathbf{r}$ are continuous.
It is enough to prove that $\mathbf{d}$ is continuous, since the proof that $\mathbf{r}$ is continuous follows by symmetry.
We have to calculate $\mathbf{d}^{-1}(\mathscr{X}_{s})$.
Suppose that the only projection below $s$ is $0$.
Then $\mathbf{d}^{-1}(\mathscr{X}_{s}) = \varnothing$;
to see why, suppose that this set were non-empty.
Then there would exist a completely prime filter $A$ such that $s \in \mathbf{d}(A) = (A^{\ast})^{\uparrow}$.
It follows that there is a non-zero projection $f \leq s$, which is a contradiction.
Now, suppose that there are non-zero projections below $s$.
Since arbitrary-sups exist and the sup of a set of projections is a projection,
there is a largest projection $f$ such that $f \leq s$.
Observe that $\mathscr{X}_{f} \subseteq \mathscr{X}_{s}$ 
and that any completely prime filter that contains $s$ and a projection must belong to $\mathscr{X}_{f}$.
It follows that $\mathbf{d}^{-1}(\mathscr{X}_{s}) = \mathbf{d}^{-1}(\mathscr{X}_{f})$.
It remains to calculate $\mathbf{d}^{-1}(\mathscr{X}_{f})$ where $f$ is any projection.
Let $A$ be a completely prime filter such that $\mathbf{d}(A) \in \mathscr{X}_{f}$.
Let $a \in A$ be any partial isometry.
Then $A = (afF^{\uparrow})$.
It follows that the set $\mathbf{d}^{-1}(\mathscr{X}_{f})$ 
is the union of all sets of the form $\mathscr{X}_{af}$ where $a$ is a partial isometry and $af \neq 0$.
\end{proof}

\begin{lemma}\label{lem:topology-open} 
The maps $\mathbf{d}$ and $\mathbf{r}$ are open.
\end{lemma}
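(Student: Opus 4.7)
The plan is to show that a generating family of open sets is carried by $\mathbf{d}$ (and by $\mathbf{r}$) to open sets. Since the collection $\tau = \{\mathscr{X}_a : a \in \mathsf{PI}(Q)\}$ is a base for the topology on $\mathsf{C}(Q)$ by Lemma~\ref{lem:base}, and since $\mathbf{d}$ commutes with arbitrary unions, it suffices to prove that $\mathbf{d}(\mathscr{X}_a)$ is open for each partial isometry $a$. I claim that in fact
\[
\mathbf{d}(\mathscr{X}_a) = \mathscr{X}_{a^{\ast}},
\]
and since $a^{\ast}$ is a projection (hence a partial isometry), $\mathscr{X}_{a^{\ast}}$ lies in $\tau$ and is therefore open.

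For the forward inclusion, suppose $A \in \mathscr{X}_a$. Then $a \in A$, so $a^{\ast} \in A^{\ast}$, and hence $a^{\ast} \in (A^{\ast})^{\uparrow} = \mathbf{d}(A)$, giving $\mathbf{d}(A) \in \mathscr{X}_{a^{\ast}}$. This is essentially unpacking the definition of $\mathbf{d}(A)$.

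For the reverse inclusion, this is the step that requires real content: given $B \in \mathscr{X}_{a^{\ast}}$, we must produce a completely prime filter $C \in \mathscr{X}_a$ with $\mathbf{d}(C) = B$. Since $B$ contains the projection $a^{\ast}$, it is an identity completely prime filter. Lemma~\ref{lem:thatcher}(3), applied to the identity filter $B$ and the partial isometry $a$ (whose projection $a^{\ast}$ lies in $B$), produces exactly such a filter: set $C = (aB)^{\uparrow}$. That lemma directly gives that $C$ is a completely prime filter with $\mathbf{d}(C) = B$. Finally, $a = a a^{\ast} \in aB$ (because $a^{\ast} \in B$), so $a \in C$, i.e.\ $C \in \mathscr{X}_a$. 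This establishes the equality $\mathbf{d}(\mathscr{X}_a) = \mathscr{X}_{a^{\ast}}$.

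The argument for $\mathbf{r}$ is entirely symmetric: one shows $\mathbf{r}(\mathscr{X}_a) = \mathscr{X}_{a^{+}}$, using the dual of Lemma~\ref{lem:thatcher}(3) to construct, for each identity filter $B$ with $a^{+} \in B$, a completely prime filter $C = (Ba)^{\uparrow}$ containing $a$ with $\mathbf{r}(C) = B$. The only genuinely nontrivial step in the whole proof is the invocation of Lemma~\ref{lem:thatcher}(3) to lift identity filters along partial isometries; everything else is a routine verification.
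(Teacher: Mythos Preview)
Your proof is correct and follows exactly the same route as the paper: the paper's proof consists of the single observation that $\mathbf{d}(\mathscr{X}_{s}) = \mathscr{X}_{s^{\ast}}$ (and symmetrically for $\mathbf{r}$), stated without justification. You have supplied the missing details, in particular the nontrivial reverse inclusion via Lemma~\ref{lem:thatcher}(3), which the paper leaves entirely to the reader.
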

\begin{proof} We prove that the map $\mathbf{d}$ is open;
the proof that $\mathbf{r}$ is open follows by symmetry.
Observe that $\mathbf{d}(\mathscr{X}_{s}) = \mathscr{X}_{s^{\ast}}$.
\end{proof}

The proof of the following is now immediate
once you observe that the sets $\mathscr{X}_{s}$ are open local bisections when $s$ is a partial isometry.

\begin{proposition}\label{prop:two} If $Q$ is a restriction quantal frame then
$\mathsf{C}(Q)$ is an \'etale topological category.
\end{proposition}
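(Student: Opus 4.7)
The plan is to invoke Lemma~\ref{lem:base-base} together with the results on $\mathsf{C}(Q)$ already assembled. Lemma~\ref{lem:topology-cat} gives that $\mathsf{C}(Q)$ is a topological category, Lemma~\ref{lem:topology-open} gives that $\mathbf{d}$ and $\mathbf{r}$ are open, and the sets $\mathscr{X}_s$ with $s \in \mathsf{PI}(Q)$ form a base for the topology by construction. By Lemma~\ref{lem:base-base}, it is therefore enough to verify that each basic open set $\mathscr{X}_s$ (with $s$ a partial isometry) is a local bisection; \'etaleness follows at once.

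The key step is therefore the hinted claim that $\mathscr{X}_s$ is a local bisection whenever $s$ is a partial isometry. I would argue as follows. Let $A, B \in \mathscr{X}_s$ with $\mathbf{d}(A) = \mathbf{d}(B)$. Since $s$ is a partial isometry lying in both $A$ and $B$, the hypotheses of Lemma~\ref{lem:thatcher}(4) are met, so $A = B$; this shows $\mathbf{d}$ restricted to $\mathscr{X}_s$ is injective. The injectivity of $\mathbf{r}$ restricted to $\mathscr{X}_s$ follows by the dual of Lemma~\ref{lem:thatcher}(4). Hence $\mathscr{X}_s$ is an open local bisection, as required.

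With the base $\tau$ shown to consist of open local bisections and with $\mathbf{d},\mathbf{r}$ already known to be open and continuous, Lemma~\ref{lem:base-base} directly yields that $\mathbf{d}$ and $\mathbf{r}$ are local homeomorphisms. Thus $\mathsf{C}(Q)$ is an \'etale topological category.

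I do not anticipate a serious obstacle: all the substantive work has been done in the preceding lemmas. The only mild subtlety is to remember that Lemma~\ref{lem:base-base} requires the openness of $\mathbf{d}$ and $\mathbf{r}$ as a hypothesis, which is why the earlier Lemma~\ref{lem:topology-open} is indispensable here.
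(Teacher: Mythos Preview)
Your proposal is correct and follows essentially the same approach as the paper: the paper's proof is simply the remark that the result is immediate once one observes that the sets $\mathscr{X}_{s}$ are open local bisections for $s$ a partial isometry, and you have supplied exactly that observation (via Lemma~\ref{lem:thatcher}(4)) together with the explicit appeal to Lemma~\ref{lem:base-base}, Lemma~\ref{lem:topology-cat}, and Lemma~\ref{lem:topology-open}.
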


We can describe the identity space of $\mathsf{C}(Q)$.

\begin{lemma}\label{lem:rishi} Let $Q$ be a restriction quantal frame with identity $e$.
Then the identity space of $\mathsf{C}(Q)$ is homeomorphic to the space $\mathsf{pt}(e^{\downarrow})$.
\end{lemma}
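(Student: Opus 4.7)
The plan is to build the homeomorphism from the bijection already established in Proposition~\ref{prop:bijection-cp} and then check that both sides carry matching bases. By Proposition~\ref{prop:one}, the identities of $\mathsf{C}(Q)$ are exactly the identity completely prime filters, so as a set the identity space is $\mathscr{X}_e$. Proposition~\ref{prop:bijection-cp} gives a set-theoretic bijection $\varphi \colon \mathscr{X}_e \to \mathsf{pt}(e^{\downarrow})$ defined by $A \mapsto A \cap e^{\downarrow}$, with inverse $F \mapsto F^{\uparrow}$. Thus the work reduces to showing that $\varphi$ and its inverse are continuous.

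First I would identify a convenient base for the subspace topology on $\mathscr{X}_e$. The basic opens of $\mathsf{C}(Q)$ are $\mathscr{X}_s$ for $s$ a partial isometry, so a base for the subspace topology on $\mathscr{X}_e$ consists of the sets $\mathscr{X}_s \cap \mathscr{X}_e$. By Lemma~\ref{lem:merkel-one}(4) this equals $\mathscr{X}_{s \wedge e}$, and $s \wedge e \leq e$ lies in $e^{\downarrow} = \mathsf{P}$ since the projections form an order-ideal. A base for the subspace topology on $\mathscr{X}_e$ is therefore given by the sets $\mathscr{X}_f$ with $f$ a projection.

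Next I would compare this with the canonical frame topology on $\mathsf{pt}(e^{\downarrow})$. Since $e^{\downarrow}$ is closed under arbitrary joins (as noted in the proof of Lemma~\ref{lem:topology-cat}, the sup of a set of projections is a projection) and under finite meets (the projections form an order-ideal in the frame $Q$), $e^{\downarrow}$ is itself a frame, and a base for $\mathsf{pt}(e^{\downarrow})$ is given by the sets $Y_f = \{F \in \mathsf{pt}(e^{\downarrow}) : f \in F\}$ for $f \in e^{\downarrow}$. A direct unwinding of the definition then gives $\varphi(\mathscr{X}_f) = Y_f$: indeed, $A \in \mathscr{X}_f$ precisely when $f \in A$, precisely when $f \in A \cap e^{\downarrow} = \varphi(A)$. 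Hence $\varphi$ and $\varphi^{-1}$ each send a base to a base, so $\varphi$ is a homeomorphism.

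The only real obstacle is the bookkeeping needed to ensure that the notion of ``completely prime filter in $e^{\downarrow}$'' used in $\mathsf{pt}(e^{\downarrow})$ coincides with what comes out of Proposition~\ref{prop:bijection-cp}; this rests on the fact that joins in $e^{\downarrow}$ agree with joins in $Q$, which is exactly the closure property recalled above. Everything else is a routine check against the bases.
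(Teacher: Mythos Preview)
Your proposal is correct and follows essentially the same approach as the paper: both start from the bijection of Proposition~\ref{prop:bijection-cp}, reduce the subspace base on the identity space to sets $\mathscr{X}_f$ with $f$ a projection, and then observe $A \in \mathscr{X}_f \Leftrightarrow A \cap e^{\downarrow} \in X_f$. Your reduction via $\mathscr{X}_s \cap \mathscr{X}_e = \mathscr{X}_{s \wedge e}$ is marginally cleaner than the paper's ``$a$ is above a projection'' phrasing, but the argument is the same.
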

\begin{proof} We use the order isomorphism established in Proposition~\ref{prop:bijection-cp}.
We now prove that this bijection is a homeomorphism.
Let $\mathscr{X}_{a}$ contain an identity completely prime filter.
Then $a$ is above a projection.
It follows that we may restrict out attention to open sets of the form $\mathscr{X}_{f}$
where $f$ is a projection.
Observe that $A \in \mathscr{X}_{f}$ if and only if $A \cap e^{\downarrow} \in X_{f}$.
This establishes our homeomeorphism.
 \end{proof}

\begin{lemma}\label{lem:restriction-functor} Let $\phi \colon R \rightarrow S$ 
be a morphism of restriction quantal frames.
Then this induces a continuous covering functor $\phi^{-1} \colon \mathsf{C}(S) \rightarrow \mathsf{C}(R)$.
\end{lemma}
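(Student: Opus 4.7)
The plan is to verify in sequence: (i) $\phi^{-1}$ sends a completely prime filter $A$ of $S$ to the completely prime filter $\phi^{-1}(A) = \{r\in R : \phi(r)\in A\}$ of $R$; (ii) this assignment is functorial with respect to the category structures built in Proposition~\ref{prop:one}; (iii) it is continuous for the topologies of Lemma~\ref{lem:base}; and (iv) it is a covering functor.

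Step (i) is standard frame theory, using that $\phi$ preserves finite meets and arbitrary joins (in particular $\phi(0)=0$). For (ii), the key identities to establish are $\phi^{-1}(\mathbf{d}(A)) = \mathbf{d}(\phi^{-1}(A))$ and $\phi^{-1}(A\cdot B) = \phi^{-1}(A)\cdot\phi^{-1}(B)$ whenever $\mathbf{d}(A) = \mathbf{r}(B)$. The first follows from $\phi(a^{\ast}) = \phi(a)^{\ast}$ together with Proposition~\ref{prop:bijection-cp}, which reduces the equality of two identity completely prime filters to agreement on projections. The second follows from $\phi(ab) = \phi(a)\phi(b)$ combined with the generating description in Lemma~\ref{lem:thatcher}(2). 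Preservation of category identities reduces to $\phi(e_{R}) = e_{S}$, which holds because $\phi$ is a monoid homomorphism.

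Continuity in (iii) is immediate: for $r\in\mathsf{PI}(R)$, direct calculation gives $(\phi^{-1})^{-1}(\mathscr{X}_{r}) = \mathscr{X}_{\phi(r)}$, and since $\phi$ sends partial isometries to partial isometries, $\mathscr{X}_{\phi(r)}$ is a basic open set of $\mathsf{C}(S)$.

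The substantive step is (iv). For $\mathbf{d}$-injectivity, given $\phi^{-1}(A) = \phi^{-1}(B)$ and $\mathbf{d}(A) = \mathbf{d}(B)$, Lemma~\ref{lem:thatcher}(4) reduces the problem to exhibiting a single partial isometry common to $A$ and $B$. For $\mathbf{d}$-surjectivity, given an identity completely prime filter $E$ of $\mathsf{C}(S)$ and $y\in\mathsf{C}(R)$ with $\mathbf{d}(y) = \phi^{-1}(E)$, I would pick a partial isometry $a'\in y$ and define $A = (\phi(a')\cdot E)^{\uparrow}$; the relation $a'^{\ast}\in\mathbf{d}(y) = \phi^{-1}(E)$ gives $\phi(a')^{\ast}\in E$, so Lemma~\ref{lem:thatcher}(3) yields a completely prime filter $A$ of $S$ with $\mathbf{d}(A) = E$. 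The $\mathbf{r}$-versions follow by symmetry. The main obstacle I anticipate is then verifying $\phi^{-1}(A) = y$: the inclusion $y\subseteq\phi^{-1}(A)$ is immediate from the semigroup homomorphism property and Lemma~\ref{lem:thatcher}(2), but the reverse inclusion requires converting an inequality $\phi(a')f\le\phi(r)$ in $S$, with $f\in E$, into an inequality $a'g\le r$ in $R$ with $g\in\mathbf{d}(y)$, and this is where the full strength of the morphism conditions---in particular meet preservation and the interaction between $(-)^{\ast}$ and partial isometries---must be deployed.
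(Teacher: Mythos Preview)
Your overall architecture matches the paper's proof closely: same decomposition into well-definedness, functoriality, continuity, and the covering property, and the same reliance on Lemma~\ref{lem:thatcher} as the workhorse. Two points deserve comment.

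First, your step~(ii) is under-argued. The inclusion $\mathbf{d}(\phi^{-1}(A)) \subseteq \phi^{-1}(\mathbf{d}(A))$ does follow directly from $\phi(a^{\ast}) = \phi(a)^{\ast}$, but the reverse inclusion does not: knowing that a projection $f$ satisfies $\phi(f)\in\mathbf{d}(A)$ does not by itself exhibit any $r\in\phi^{-1}(A)$ with $r^{\ast}\leq f$. The paper handles this by first fixing a partial isometry $a\in\phi^{-1}(A)$ (which exists since every completely prime filter contains one) and then observing that $\phi(af)=\phi(a)\phi(f)\in A\cdot\mathbf{d}(A)\subseteq A$, so $af\in\phi^{-1}(A)$ with $(af)^{\ast}\leq f$. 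Proposition~\ref{prop:bijection-cp} alone does not supply this element; multiplicativity of $\phi$ is essential here.

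Second, the ``main obstacle'' you flag in step~(iv) is in fact no obstacle at all, and the paper's resolution is exactly the tool you already used for $\mathbf{d}$-injectivity. Once you have constructed $A=(\phi(a')E)^{\uparrow}$, you know $\phi(a')\in A$, hence $a'\in\phi^{-1}(A)$; and from step~(ii) you have $\mathbf{d}(\phi^{-1}(A))=\phi^{-1}(\mathbf{d}(A))=\phi^{-1}(E)=\mathbf{d}(y)$. So $y$ and $\phi^{-1}(A)$ are two completely prime filters sharing the partial isometry $a'$ and having the same $\mathbf{d}$, whence $y=\phi^{-1}(A)$ by Lemma~\ref{lem:thatcher}(4). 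No element-chase converting inequalities across $\phi$ is needed. (Your proposed direct route can be pushed through using meet preservation and the identity $a'\wedge r = a'(a'\wedge r)^{\ast}$, but it is considerably more laborious.)
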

\begin{proof} If $B$ is a completely prime filter in $S$, 
then $\phi^{-1}(B)$ is a completely prime filter in $R$ and so $\phi^{-1}$ induces a continous function by \cite[Theorem 1.4]{J}.

It remains to show that $\phi^{-1}$ is a covering functor.
This will be proved in stages.

Observe that $\phi^{-1}$ maps identity completely prime filters to identity completely prime filters;
we shall prove this.
Suppose that $B$ contains a projection $f$.
We need to prove that $\phi^{-1}(B)$ contains a projection.
Let $x \in \phi^{-1}(B)$.
Then $\phi (x) \in B$.
Thus, since $B$ contains a projection, it follows by Lemma~\ref{lem:thanos},
that $\phi (x)^{\ast} \in B$.
But, $\theta$ is a morphism of Ehresmann semigroups and so $\phi (x)^{\ast} = \phi (x^{\ast})$.
We deduce that $x^{\ast} \in \phi^{-1}(B)$. 
Thus $\theta^{-1}(B)$ contains a projection.

Let $B$ be a completely prime filter in $S$.
By the above, $A = \phi^{-1}(B)$ is a completely prime filter in $R$.
We now describe the completely prime filters $A$ and $B$.
We may write $A = (aA^{\ast})^{\uparrow}$ where $a \in A$ and $a$ is a partial isometry.
But $a \in A = \phi^{-1}(B)$ and so $b = \phi (a) \in B$,
and $\phi$ preserves partial isometries.
Thus $b$ is also a partial isometry.
It follows that $B = (bB^{\ast})^{\uparrow}$.
We have used Lemma~\ref{lem:thatcher} twice.

We now prove that 
$$\mathbf{d}(\phi^{-1}(B)) = \phi^{-1}(\mathbf{d}(B)),$$
with the proof of the dual result following by symmetry.
We therefore actually need to prove that 
$$(\phi^{-1}(B)^{\ast})^{\uparrow} = \phi^{-1}((B^{\ast})^{\uparrow}).$$
We prove first that 
$$(\phi^{-1}(B)^{\ast})^{\uparrow} \subseteq \phi^{-1}((B^{\ast})^{\uparrow}).$$
Let $r \in (\phi^{-1}(B)^{\ast})^{\uparrow}$.
Then $u^{\ast} \leq r$ where $u \in A = \phi^{-1}(B)$.
Apply $\phi$ to both sides to obtain
$\phi (u^{\ast}) = \phi (u)^{\ast} \leq \phi (r)$
where we have used the fact that $\phi$ is a morphism of Ehresmann semigroups.
But $\phi (u) \in B$.
Using our description of elements of $B$ above,
it follows that $\phi (u) \geq bf$ where $f \in B^{\ast}$.
Thus $(bf)^{\ast} \leq \phi (r)$ and so $r \in \phi^{-1}((B^{\ast})^{\uparrow})$.
We now prove the reverse inclusion.
That is: we prove that 
$$\phi^{-1}(\mathbf{d}(B)) \subseteq \mathbf{d}(\phi^{-1}(B)).$$
Let $x \in \phi^{-1}(\mathbf{d}(B))$.
Observe that $\mathbf{d}(B)$ is a completely prime filter that contains projections.
Thus $\phi^{-1}(\mathbf{d}(B))$ is a completely prime filter that contains projections
by what we proved above.
There is therefore a projection $f \in \phi^{-1}(\mathbf{d}(B))$ such that $f \leq x$.
By definition, $\phi (f) \in \mathbf{d}(B)$.
Observe that $\phi (af) = \phi(a) \phi(f) \in B\mathbf{d}(B) \subseteq B$.
It follows that $af \in \phi^{-1}(B)$.
Now, $(af)^{\ast} = a^{\ast}f \leq f$.
We have therefore shown that $(af)^{\ast} \leq x$ and $\phi (af) \in B$.
Thus $x \in \mathbf{d}((\phi^{-1}(B))$.

We can now show that $\theta^{-1}$ is functor.
Let $B_{1},B_{2}$ be completely prime filters in $S$ such that
$\mathbf{d}(B_{1}) = \mathbf{r}(B_{2})$.
Then $B_{1} \cdot B_{2} = (B_{1}B_{2})^{\uparrow}$ is a completely prime filter in $S$.
It follows that $\phi^{-1}(B_{1} \cdot B_{2})$ is a completely prime filter in $R$.
By the above, 
$\mathbf{d}(\phi^{-1}(B_{1})) = \phi^{-1}(\mathbf{d}(B_{1}))$ 
and
$\mathbf{r}(\phi^{-1}(B_{2})) = \phi^{-1}(\mathbf{r}(B_{2}))$.
It follows that $\mathbf{d}(\phi^{-1}(B_{1})) = \mathbf{r}(\phi^{-1}(B_{2}))$.
Thus $\phi^{-1}(B_{1}) \cdot \phi^{-1}(B_{2})$ is a well-defined completely prime filter in $R$.
It remains to prove that 
$\phi^{-1}(B_{1} \cdot B_{2})
=
\phi^{-1}(B_{1}) \cdot \phi^{-1}(B_{2})$.
It is easy to check that
$\phi^{-1}(B_{1}) \cdot \phi^{-1}(B_{2})
\subseteq 
\phi^{-1}(B_{1} \cdot B_{2})$.
But both sides are completely prime filters and $\mathbf{d}$ of each side is the same.
It follows by Lemma~\ref{lem:thatcher} that the two completely prime filters are equal.

It only remains to check that $\phi^{-1}$ is a covering (functor).
Suppose that $A$ and $B$ are completely prime filters in $S$ such that $\phi^{-1}(A) = \phi^{-1}(B)$
and $\mathbf{d}(A) = \mathbf{d}(B)$.
We prove that $A = B$.
Let $r \in \phi^{-1}(A) = \phi^{-1}B)$ be any partial isometry.
Then $\phi (r) \in A$ and $\phi(r) \in B$.
It follows that $A$ and $B$ have the same domains and share a partial isometry.
Thus, by Lemma~\ref{lem:thatcher}, we have that $A = B$.
Let $F$ be an identity completely prime filter in $S$.
Then $\phi^{-1}(F)$ is an identity prime filter in $R$.
Suppose that $A$ is a completely prime filter in $R$ such that $\mathbf{d}(A) = \phi^{-1}(F)$.
Let $a$ be any partial isometry in $R$ such that $a \in A$.
Then $a^{\ast} \in  \phi^{-1}(F)$ and $\phi (a)$ is a partial isometry in $S$.
Observe that $\phi(a^{\ast}) \in F$.
We may therefore construct the completely prime filter $B = (\phi (a)F)^{\uparrow}$, by Lemma~\ref{lem:thatcher}.
We claim that $A = \phi^{-1}(B)$; 
this follows from the fact that $\phi (a) \in B$ and so $a \in \phi^{-1}(B)$
and $\mathbf{d}(\phi^{-1}(B)) = \phi^{-1}(\mathbf{d}(B)) = \phi^{-1}(F)$.
What we have done above for $\mathbf{d}$, we can, by symmetry, do for $\mathbf{r}$.
It follows that $\phi^{-1}$ is a covering (functor).
\end{proof}

Extend $\mathsf{C}$ to morphisms.
By Proposition~\ref{prop:two} and Lemma~\ref{lem:restriction-functor},
we have proved the following.

\begin{proposition}\label{prop:functor-two} $\mathsf{C}$ is a contravariant functor 
from the category whose objects are the restriction quantal frames
and whose arrows are the morphisms of restriction quantal frames to the category whose objects are the \'etale topological categories
and whose arrows are the continuous covering functors
\end{proposition}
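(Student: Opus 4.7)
The plan is to assemble the two ingredients already in place, namely Proposition~\ref{prop:two} and Lemma~\ref{lem:restriction-functor}, and then verify the two functoriality axioms, both of which reduce to standard facts about inverse images of sets.

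First, I would record what $\mathsf{C}$ does on objects and arrows. On objects, to each restriction quantal frame $Q$ we assign the topological category $\mathsf{C}(Q)$ of completely prime filters of $Q$ constructed in Section~4; by Proposition~\ref{prop:two} this is indeed an \'etale topological category, so the assignment lands in the target category. On arrows, given a morphism $\phi \colon R \rightarrow S$ of restriction quantal frames, set $\mathsf{C}(\phi) = \phi^{-1} \colon \mathsf{C}(S) \rightarrow \mathsf{C}(R)$; Lemma~\ref{lem:restriction-functor} states precisely that this map is a continuous covering functor, so $\mathsf{C}(\phi)$ is an arrow in the target category, going in the reverse direction as required for a contravariant functor.

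Next I would check the two functoriality equations. For preservation of identities, if $\phi$ is the identity morphism on $Q$, then for any completely prime filter $B$ in $Q$ we have $\phi^{-1}(B) = B$, so $\mathsf{C}(\mathrm{id}_{Q}) = \mathrm{id}_{\mathsf{C}(Q)}$. For compatibility with composition, given morphisms $\phi \colon R \rightarrow S$ and $\psi \colon S \rightarrow T$, the standard identity $(\psi \circ \phi)^{-1}(C) = \phi^{-1}(\psi^{-1}(C))$ for an arbitrary completely prime filter $C$ in $T$ shows that
\[
\mathsf{C}(\psi \circ \phi) = \mathsf{C}(\phi) \circ \mathsf{C}(\psi),
\]
which is the correct contravariant equation.

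There is essentially no obstacle here: the entire content of the proposition was already absorbed into Proposition~\ref{prop:two} (which handled the object level, including étaleness of the constructed topological category) and Lemma~\ref{lem:restriction-functor} (which handled the arrow level, including continuity, functoriality of $\phi^{-1}$ as a map of categories, and the covering property). The only step left is the observation that assignment of arrows by inverse image is strictly functorial, and this is immediate from basic set theory. Hence the two previously established results combine to give the statement of Proposition~\ref{prop:functor-two}.
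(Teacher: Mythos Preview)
Your proposal is correct and matches the paper's approach exactly: the paper simply cites Proposition~\ref{prop:two} and Lemma~\ref{lem:restriction-functor} and declares the result proved. If anything, you are slightly more thorough, since you explicitly verify the identity and composition axioms for the inverse-image assignment, which the paper leaves implicit.
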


\section{There and back again}

In Proposition~\ref{prop:kemi}, we proved that if $C$ was an \'etale category then $\Omega (C)$ was
a restriction quantal frame.
In Proposition~\ref{prop:two}, we proved that if $Q$ was a restriction quantal frame
then $\mathsf{C}(Q)$ was an \'etale category.
We now iterate these two constructions.
Let $Q$ be a restriction quantal frame.
Then $\mathsf{C}(Q)$ is an \'etale category.
It follows that $\Omega (\mathsf{C}(Q))$ is an Ehresmann quantal frame.
Define a map $\chi \colon Q \rightarrow \Omega (\mathsf{C}(Q))$
by $\chi (a) = \mathscr{X}_{a}$, which is an open set by Lemma~\ref{lem:open-sets}. 

\begin{lemma}\label{lem:merkel-two} Let $Q$ be a restriction quantal frame with top element $1$ and identity $e$.
\begin{enumerate}
\item $(\mathscr{X}_{a})^{\ast} = \mathscr{X}_{a^{\ast}}$, and dually, in the restriction quantal frame $\Omega (\mathsf{C}(Q))$.
\item If $a$ is a partial isometry then $\mathscr{X}_{a}$ is a partial isometry in the restriction quantal frame $\Omega (\mathsf{C}(Q))$.
\end{enumerate}
\end{lemma}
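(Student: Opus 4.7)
My plan is to attack each part directly from the definitions and reduce to results established earlier in the paper. For (1), the key is to unfold what the $\ast$-operation on $\Omega(\mathsf{C}(Q))$ means: by Example~\ref{ex:example-one} applied to the \'etale category $\mathsf{C}(Q)$, we have $U^\ast = \{\mathbf{d}(A) : A \in U\}$ for any open $U$, where $\mathbf{d}(A) = (A^\ast)^{\uparrow}$ is the domain of the cpf $A$ inside the category. Thus (1) reduces to verifying
$$\{\mathbf{d}(A) : A \in \mathscr{X}_a\} = \mathscr{X}_{a^\ast}.$$

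For the forward inclusion, if $a \in A$ then $a^\ast \in A^\ast$ and hence $a^\ast \in (A^\ast)^{\uparrow} = \mathbf{d}(A)$, so $\mathbf{d}(A) \in \mathscr{X}_{a^\ast}$. For the reverse inclusion, given any $F \in \mathscr{X}_{a^\ast}$, observe that $a^\ast$ is a projection, so $F$ is an identity cpf. I will use the fact (remarked at the start of Section~4) that $a$ is a join of partial isometries: write $a = \bigvee_i a_i$. Since $\ast$ is join-preserving, $a^\ast = \bigvee_i a_i^\ast \in F$, and complete primality of $F$ yields some index $i$ with $a_i^\ast \in F$. Lemma~\ref{lem:thatcher}(3) then produces a cpf $B = (a_i F)^{\uparrow}$ with $\mathbf{d}(B) = F$; since $a_i = a_i a_i^\ast \in a_i F$ and $a_i \leq a$, this $B$ lies in $\mathscr{X}_a$ and satisfies $\mathbf{d}(B) = F$, as required.

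For (2), I will invoke Lemma~\ref{lem:partial-isometry}, which identifies the open partial isometries in the \'etale category $\mathsf{C}(Q)$ with the open local bisections. $\mathscr{X}_a$ is open (immediately from the definition of $\tau$ when $a$ is a partial isometry, and more generally by Lemma~\ref{lem:open-sets}). To check that it is a local bisection, suppose $A, B \in \mathscr{X}_a$ with $\mathbf{d}(A) = \mathbf{d}(B)$; then both cpfs contain the partial isometry $a$, and Lemma~\ref{lem:thatcher}(4) forces $A = B$. The $\mathbf{r}$-case follows by the symmetric argument using the dual of Lemma~\ref{lem:thatcher}(4).

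The main obstacle is the reverse inclusion in (1): since $a$ is not assumed to be a partial isometry, Lemma~\ref{lem:thatcher}(3) cannot be applied to $a$ directly. The fix is to decompose $a$ into partial isometries, use complete primality of $F$ to select a single piece $a_i$ with $a_i^\ast \in F$, apply the lemma to that $a_i$, and then pass back to $a$ by upward closure together with $a_i \leq a$. The remainder is routine bookkeeping with the Ehresmann identities $a_i a_i^\ast = a_i$ and the fact that $\ast$ preserves joins.
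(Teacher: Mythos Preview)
Your proof is correct and follows essentially the same line as the paper's: both parts are reduced to Lemma~\ref{lem:thatcher}, with (2) going via the identification of open partial isometries with open local bisections (Lemma~\ref{lem:partial-isometry}). Your version is in fact more careful than the paper's on the reverse inclusion in (1): the paper simply cites Lemma~\ref{lem:thatcher} without comment, whereas you correctly observe that part~(3) of that lemma requires a partial isometry and therefore first decompose $a = \bigvee_i a_i$, use complete primality of $F$ to pick out a single $a_i$, and only then apply the lemma---this is the detail the paper's terse proof leaves implicit.
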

\begin{proof} (1) By definition, $(\mathscr{X}_{a})^{\ast} = \{ \mathbf{d}(A) \colon A \in \mathscr{X}_{a}\}$.
This is precisely the set of all completely prime filters containing $a^{\ast}$ by Lemma~\ref{lem:thatcher}.
The dual result follows similarly.

(2) Let $A,B \in \mathscr{X}_{a}$ where $\mathbf{d}(A) = \mathbf{d}(B)$.
Then by Lemma~\ref{lem:thatcher}, we must have that $A = B$.
The dual result is proved similarly.
\end{proof}

\begin{lemma}\label{lem:morphism-hobbit}
The function $\chi$ is a morphism of restriction quantal frames.
\end{lemma}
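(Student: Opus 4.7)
The plan is to verify that $\chi$ is a morphism of restriction quantal frames in the sense of the definition given after Proposition~\ref{prop:kemi}. Five of the six conditions are essentially already established by earlier results: preservation of arbitrary joins and finite meets is Lemma~\ref{lem:merkel-one}(3) and (4); that $\chi$ maps the top element and the monoid identity of $Q$ to their counterparts in $\Omega(\mathsf{C}(Q))$ follows from Lemma~\ref{lem:merkel-one}(1) and (2); preservation of the Ehresmann operations is Lemma~\ref{lem:merkel-two}(1) together with its dual; and preservation of partial isometries is Lemma~\ref{lem:merkel-two}(2). All of the work therefore lies in verifying the semigroup-homomorphism identity $\mathscr{X}_{ab} = \mathscr{X}_{a} \cdot \mathscr{X}_{b}$.

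I would first reduce this identity to the case where $a$ and $b$ are partial isometries. By the remark at the start of Section~4 every element of $Q$ is a join of partial isometries, multiplication distributes over joins in both $Q$ and $\Omega(\mathsf{C}(Q))$, and $\chi$ preserves joins by Lemma~\ref{lem:merkel-one}(3); so the identity for arbitrary elements follows from the partial-isometry case by a routine join expansion. Assuming $a, b$ are partial isometries, the inclusion $\mathscr{X}_{a} \cdot \mathscr{X}_{b} \subseteq \mathscr{X}_{ab}$ is immediate: if $A \ni a$ and $B \ni b$ with $\mathbf{d}(A) = \mathbf{r}(B)$, then $ab \in AB \subseteq (AB)^{\uparrow} = A \cdot B$.

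For the reverse inclusion, given $C \in \mathscr{X}_{ab}$, I would construct an explicit factorization $C = A \cdot B$ with $a \in A$ and $b \in B$. Set $F = \mathbf{d}(C)$; since $(ab)^{\ast} = (a^{\ast} b)^{\ast} \leq b^{\ast}$, the projection $b^{\ast}$ lies in $F$, and Lemma~\ref{lem:thatcher}(3) then produces a completely prime filter $B := (bF)^{\uparrow}$ containing $b$ with $\mathbf{d}(B) = F$. Letting $G := \mathbf{r}(B)$, which is an identity completely prime filter by Lemma~\ref{lem:callghan} together with Lemma~\ref{lem:walpole}, and assuming for the moment that $a^{\ast} \in G$, a second application of Lemma~\ref{lem:thatcher}(3) yields a completely prime filter $A := (aG)^{\uparrow}$ containing $a$ with $\mathbf{d}(A) = G = \mathbf{r}(B)$. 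The product $A \cdot B$ is then defined, is a completely prime filter containing the partial isometry $ab$, and satisfies $\mathbf{d}(A \cdot B) = \mathbf{d}(B) = F = \mathbf{d}(C)$, so Lemma~\ref{lem:thatcher}(4) forces $A \cdot B = C$.

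The sole subtle point is the verification $a^{\ast} \in G = \mathbf{r}(B) = (B^{+})^{\uparrow}$, and this is where the main obstacle lies. The key identity is $b(ab)^{\ast} = a^{\ast} b$, which follows by combining the Ehresmann axiom $(ab)^{\ast} = (a^{\ast} b)^{\ast}$ with the restriction axiom $fa = a(fa)^{\ast}$ applied to $f = a^{\ast}$ and the axiom's $a$ replaced by $b$. Since $(ab)^{\ast} \in F$, we then have $a^{\ast} b = b(ab)^{\ast} \in B$; the other Ehresmann axiom $(ab)^{+} = (ab^{+})^{+}$, together with the fact that $a^{\ast} b^{+}$ is a projection (and so fixed by $^{+}$), gives $(a^{\ast} b)^{+} = a^{\ast} b^{+}$. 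This projection lies in $B^{+}$ and, because products of projections coincide with meets, it is beneath $a^{\ast}$; hence $a^{\ast} \in (B^{+})^{\uparrow} = \mathbf{r}(B)$, as required, and the factorization closes.
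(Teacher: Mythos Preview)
Your proof is correct and follows essentially the same route as the paper: reduce the multiplicativity of $\chi$ to the case of partial isometries, handle the easy inclusion directly, and for the hard inclusion factor $C \in \mathscr{X}_{ab}$ by first building $B = (b\,\mathbf{d}(C))^{\uparrow}$ and then $A = (a\,\mathbf{r}(B))^{\uparrow}$, concluding $C = A \cdot B$ via Lemma~\ref{lem:thatcher}. Your write-up is in fact more careful than the paper's, since you explicitly verify the point $a^{\ast} \in \mathbf{r}(B)$ using the restriction identity $a^{\ast}b = b(ab)^{\ast}$, whereas the paper simply asserts the construction of $A$ without justifying this hypothesis of Lemma~\ref{lem:thatcher}(3).
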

\begin{proof} To prove that $\chi$ is a semigroup homomorphism,
we need to prove that $\mathscr{X}_{ab} = \mathscr{X}_{a}\mathscr{X}_{b}$.
We prove first that 
$\mathscr{X}_{a}\mathscr{X}_{b} \subseteq \mathscr{X}_{ab}$.
Let $A \in  \mathscr{X}_{a}$ and $B \in \mathscr{X}_{b}$ and suppose that $A \cdot B$ is defined.
Then $A \cdot B = (AB)^{\uparrow}$ and so $A \cdot B$ contains the element $ab$.
We prove the reverse inclusion $\mathscr{X}_{ab} \subseteq \mathscr{X}_{a}\mathscr{X}_{b}$.
Let $A \in \mathscr{X}_{ab}$.
We may write $a$ as a product of partial isometries $a_{i}$ so that $a = \bigvee_{i \in I} a_{i}$;
similarly, we may write $b =  \bigvee_{j \in J} b_{j}$ of partial isometries.
It follows that $ab = \bigvee a_{i}b_{j}$, since we are working in a quantale.
It follows that $a_{i}b_{j} \in A$ for some $i$ and some $j$.
Observe that since $Q$ is a restriction quantal frame,
it follows that $a_{i}b_{j}$ is a partial isometry and that $a_{i} \leq a$ and $b_{j} \leq b$.
Thus it is enough to prove that 
$\mathscr{X}_{ab} \subseteq \mathscr{X}_{a}\mathscr{X}_{b}$
under the assumption that $a$ and $b$ are partial isometries.
Let $C \in \mathscr{X}_{ab}$. 
Put $B = (b(C^{\ast})^{\uparrow})$ where $b^{\ast} \in C^{\ast}$.
Then $B$ is a completely prime filter by Lemma~\ref{lem:thatcher}.
Put $A = (a(B^{+})^{\uparrow})^{\uparrow}$.
Observe that by construction $\mathbf{d}(A) = \mathbf{r}(B)$
and that $C = A \cdot B$.
We have therefore proved the result.
It remains to show that $\chi$ satisfy the four conditions that define a morphism of restriction quantal frames.
This follows by Lemma~\ref{lem:merkel-one} and Lemma~\ref{lem:merkel-two}.
\end{proof}

We say that a restriction quantal frame $Q$ is {\em spatial} if $\chi$ is an isomorphism.

\begin{lemma}\label{lem:spatial-eqf}
Let $Q$ be an Ehresmann quantal frame.
Then $Q$ is spatial if and only if $\mathscr{X}_{a} = \mathscr{X}_{b}$ implies that $a = b$.
\end{lemma}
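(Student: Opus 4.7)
My plan is to observe that the injectivity condition $\mathscr{X}_{a} = \mathscr{X}_{b} \Rightarrow a = b$ is precisely injectivity of the map $\chi$, and that $\chi$ is always surjective onto $\Omega(\mathsf{C}(Q))$, so the spatiality condition (bijectivity of $\chi$, combined with $\chi$ being a morphism) reduces to exactly the stated injectivity condition.

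The forward direction is trivial: if $\chi$ is an isomorphism, then it is injective, so $\chi(a)=\chi(b)$, i.e.\ $\mathscr{X}_{a}=\mathscr{X}_{b}$, forces $a=b$. For the reverse direction, I would first establish the surjectivity of $\chi$ using Lemma~\ref{lem:base}: any open set $U\subseteq\mathsf{C}(Q)$ is a union $U=\bigcup_{i\in I}\mathscr{X}_{a_{i}}$ of basic opens with each $a_{i}$ a partial isometry, and by Lemma~\ref{lem:merkel-one}(3) this union equals $\mathscr{X}_{\bigvee_{i}a_{i}}=\chi(\bigvee_{i}a_{i})$. So under the injectivity hypothesis, $\chi$ is a bijection; the fact that $\chi$ is a morphism is Lemma~\ref{lem:morphism-hobbit}.

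The remaining step is to upgrade ``bijective morphism'' to ``isomorphism''. Because $\chi$ preserves finite meets and is bijective, the equivalence $a\leq b\iff a\wedge b=a \iff \chi(a)\wedge\chi(b)=\chi(a)\iff\chi(a)\leq\chi(b)$ shows $\chi$ is an order isomorphism of the underlying frames, whence $\chi^{-1}$ automatically preserves all joins and finite meets. That $\chi^{-1}$ preserves the Ehresmann structure then follows by applying $\chi$ and using its preservation of $(-)^{\ast}$ and $(-)^{+}$ together with injectivity, and preservation of partial isometries by $\chi^{-1}$ follows from the characterisation of partial isometries in $\Omega(\mathsf{C}(Q))$ as open local bisections (Lemma~\ref{lem:partial-isometry}), combined with Lemma~\ref{lem:merkel-two}(2).

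I do not foresee a serious obstacle: the whole content is the recognition that $\chi$ is automatically surjective, so spatiality collapses to injectivity. The only mildly delicate ingredient is the final ``bijective morphism is an isomorphism'' argument, but this is routine once one has Lemma~\ref{lem:merkel-two} and Lemma~\ref{lem:partial-isometry} in hand.
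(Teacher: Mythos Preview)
Your approach is essentially the same as the paper's: both directions are handled identically, with the key observation that $\chi$ is always surjective because every open set of $\mathsf{C}(Q)$ is a union of basic opens $\mathscr{X}_{a_i}$ and hence equals $\mathscr{X}_{\bigvee a_i}$. The paper's proof actually stops at bijectivity of $\chi$ and does not spell out the ``bijective morphism is an isomorphism'' step; your additional paragraph handling this is more thorough than the original. One small remark: your cited justification for $\chi^{-1}$ preserving partial isometries (via Lemmas~\ref{lem:partial-isometry} and~\ref{lem:merkel-two}(2)) points in the wrong direction, since those results show $\chi$ sends partial isometries to partial isometries, not the converse; the cleaner argument is the one you already have implicitly, namely that being a partial isometry is expressed purely in terms of the order, the multiplication, and $(-)^{\ast},(-)^{+}$, all of which are preserved and reflected by the bijective morphism $\chi$.
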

\begin{proof} The proof of one direction is trivial.
It is clear that if the condition holds then $\chi$ is injective.
We now prove that it is surjective.
Let $U$ be an element of $\Omega (\mathsf{C}(Q))$.
Thus $U$ is an open set in $\mathsf{C}(Q)$.
It follows that $U$ is a union of sets of the form $\mathscr{X}_{q}$ where $q \in Q$ and $q$ is a partial isometry.
Thus $U$ is itself a set of the form $\mathscr{X}_{q}$ for some $q \in Q$.
\end{proof}

The following was first proved as \cite[Lemma 8.3]{KL}.

\begin{proposition}\label{prop:spatial-eqf} Let $Q$ be an Ehresmann quantal frame.
Then $Q$ is spatial if and only if $e^{\downarrow}$ is spatial.
\end{proposition}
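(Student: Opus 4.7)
The strategy is to use Lemma~\ref{lem:spatial-eqf} as the working criterion on both sides: $Q$ is spatial if and only if for every $a \not\leq b$ in $Q$ there is a completely prime filter in $Q$ containing $a$ but not $b$, and analogously for $e^{\downarrow}$. The bridge between the two separation properties is Proposition~\ref{prop:bijection-cp}, which identifies the identity completely prime filters of $Q$ with the completely prime filters of $e^{\downarrow}$.

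For the forward direction, assume $Q$ is spatial and let $f, g$ be projections with $f \not\leq g$. Applying spatiality in $Q$, I would find a completely prime filter $A$ in $Q$ containing $f$ but omitting $g$. Since $f$ is a projection in $A$, the filter $A$ is an identity completely prime filter by Lemma~\ref{lem:pitt}, so by Proposition~\ref{prop:bijection-cp}, $A \cap e^{\downarrow}$ is a completely prime filter in $e^{\downarrow}$ containing $f$ and omitting $g$; this witnesses spatiality of $e^{\downarrow}$.

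For the converse, assume $e^{\downarrow}$ is spatial and let $a \not\leq b$ in $Q$. Writing $a = \bigvee_{i} a_{i}$ as a join of partial isometries (using the remark opening this section), some $a_{i} \not\leq b$, so without loss of generality $a$ itself is a partial isometry. Then $a \wedge b < a$ and, because $a$ is a partial isometry, $a \wedge b = a(a \wedge b)^{\ast}$; if $(a \wedge b)^{\ast} = a^{\ast}$ we would get $a \wedge b = a a^{\ast} = a$, a contradiction. Hence $(a \wedge b)^{\ast} \lneq a^{\ast}$ in $e^{\downarrow}$, so $a^{\ast} \not\leq (a \wedge b)^{\ast}$, and by spatiality of $e^{\downarrow}$ there is a completely prime filter $F$ in $e^{\downarrow}$ containing $a^{\ast}$ but not $(a \wedge b)^{\ast}$. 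Lemma~\ref{lem:thatcher}(3) then produces a completely prime filter $B = (a F^{\uparrow})^{\uparrow}$ of $Q$ containing $a = a a^{\ast}$.

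The main obstacle is to verify that $b \notin B$. Suppose to the contrary that $b \in B$; then $af \leq b$ for some $f \in F$, and since $f \leq e$ also $af \leq a$, so $af \leq a \wedge b$ and thus $(af)^{\ast} \leq (a \wedge b)^{\ast}$. The key computation is $(af)^{\ast} = (a^{\ast} f)^{\ast} = a^{\ast} \wedge f$, which uses the Ehresmann identity $(xy)^{\ast} = (x^{\ast} y)^{\ast}$, the fact that the product of two projections in an Ehresmann quantal frame coincides with their meet (a quick two-way inequality in the quantal frame), and the fact that projections are fixed by $\ast$. Since $a^{\ast}, f \in F$ and $F$ is a filter, $a^{\ast} \wedge f \in F$, and upward-closure forces $(a \wedge b)^{\ast} \in F$, contradicting the choice of $F$. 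Hence $B$ separates $a$ from $b$, and by Lemma~\ref{lem:spatial-eqf} the frame $Q$ is spatial.
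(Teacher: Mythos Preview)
Your proof is correct. The forward direction is identical to the paper's. For the converse, the paper works with the injectivity criterion of Lemma~\ref{lem:spatial-eqf} directly: assuming $\mathscr{X}_{a} = \mathscr{X}_{b}$ with $a$ a partial isometry, it deduces $\mathscr{X}_{a^{\ast}} = \mathscr{X}_{(a\wedge b)^{\ast}}$, invokes spatiality of $e^{\downarrow}$ to get $a^{\ast} = (a\wedge b)^{\ast}$, and concludes $a = a\wedge b$ via the partial-isometry identity. You instead run the contrapositive through the separation formulation: from $a \not\leq b$ you obtain $a^{\ast} \not\leq (a\wedge b)^{\ast}$, pick a separating completely prime filter $F$ in $e^{\downarrow}$, and then explicitly \emph{build} the witnessing filter $B = (aF^{\uparrow})^{\uparrow}$ in $Q$ via Lemma~\ref{lem:thatcher}(3). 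The algebraic core---reducing the comparison of $a$ and $a\wedge b$ to that of $a^{\ast}$ and $(a\wedge b)^{\ast}$ using the partial-isometry property---is the same in both arguments; your version is slightly more constructive in that it names the separating point, whereas the paper stays at the level of the sets $\mathscr{X}_{a}$.
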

\begin{proof} Suppose first that $Q$ is spatial.
We prove that $e^{\downarrow}$ is spatial.
Let $f,g$ be any projections such that $f \nleq g$.
We prove that there is a completely prime filter in $e^{\downarrow}$ that contains $f$ but omits $g$.
There is a completely prime filter $A$ in $Q$ that contains $f$ but omits $g$.
But $A$ is a completely prime filter that contains a projection.
It follows that $A \cap e^{\downarrow}$ is a completely prime filter in $e^{\downarrow}$ that contains $f$ but omits $g$.
This proves that $e^{\downarrow}$ is spatial.

Now, assume that $e^{\downarrow}$ is spatial.
We prove that $Q$ is spatial.
By Lemma~\ref{lem:spatial-eqf}, it is enough to prove that if $\mathscr{X}_{a} = \mathscr{X}_{b}$ then $a = b$.
We begin by assuming that $a$ and $b$ are partial isometries.
Suppose that $\mathscr{X}_{a} = \mathscr{X}_{b}$. 
Then $\mathscr{X}_{a} = \mathscr{X}_{a \wedge b}$. 
It follows that $\mathscr{X}_{a^{\ast}} = \mathscr{X}_{(a \wedge b)^{\ast}}$.
We now use the fact that $e^{\downarrow}$ is spatial and the fact that
identity completely prime filters are determined by the (completely prime) set of projections they contain
to deduce that $a^{\ast} = (a \wedge b)^{\ast}$.
Now $a \wedge b \leq a$ and $a$ is a partial isometry.
It follows that $a \wedge b = a(a \wedge b)^{\ast} = a$.
We have proved that $a \leq b$.
By symmetry, we deduce that $a = b$.
Now we can deal with the general case. 
Suppose that $\mathscr{X}_{a} = \mathscr{X}_{b}$. 
Then $a = \bigvee_{i \in I} a_{i}$ where the $a_{i}$ are partial isometries.
We have that $\mathscr{X}_{a_{i}} \subseteq \mathscr{X}_{b}$ so that
$\mathscr{X}_{a_{i}} = \mathscr{X}_{a_{i} \wedge b}$.
Now $a_{i}$ is a partial isometry and $a_{i} \wedge b$ is a partial isometry,
since the partial isometries form an order-ideal.
We deduce from the above that $a_{i} = a_{i} \wedge b$.
We have proved that $a_{i} \leq b$ for any $i$.
We have therefore proved that $a \leq b$.
By symmetry, we deduce that $a = b$. 
\end{proof}

Let $C$ be an \'etale topological category.
Then $\Omega (C)$ is a restriction quantal frame.
It follows that $\mathsf{C}(\Omega (C))$ is an \'etale topological category.
Define a map $\omega \colon C \rightarrow \mathsf{C}(\Omega (C))$
by $\omega (x) = O_{x}$, all the open sets of $C$ that contain $x$.
This is a completely prime filter.
We prove the following.

\begin{lemma}\label{lem:morphism-cont-cov-functor}
The function $\omega$ is a continuous covering functor.
\end{lemma}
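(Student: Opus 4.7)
The plan is to verify, in turn, that $\omega$ is functorial (taking values in $\mathsf{C}(\Omega(C))$), continuous, $\mathbf{d}$-bijective, and (by a dual argument) $\mathbf{r}$-bijective. For the functoriality, $O_{x}$ is a completely prime filter in $\Omega(C)$ by a basic observation from Section~2; if $x \in C_{o}$ then $C_{o} \in O_{x}$, so $O_{x}$ is an identity completely prime filter by Lemma~\ref{lem:pitt}. The identity $\omega(\mathbf{d}(x)) = \mathbf{d}(\omega(x))$, i.e.\ $O_{\mathbf{d}(x)} = (O_{x}^{\ast})^{\uparrow}$, is proved in one direction by noting that $\mathbf{d}(x) \in \mathbf{d}(U)$ whenever $U \in O_{x}$, and in the other by using an open local bisection $A \ni x$ and observing that $U := A \cap \mathbf{d}^{-1}(V) \in O_{x}$ satisfies $\mathbf{d}(U) \subseteq V$ whenever $V \ni \mathbf{d}(x)$. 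Preservation of composition reduces to $(O_{x} O_{y})^{\uparrow} = O_{xy}$, and the nontrivial inclusion is a direct application of the continuity of multiplication in the \'etale category $C$.

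Continuity of $\omega$ is immediate from the formula $\omega^{-1}(\mathscr{X}_{V}) = V$ for every $V \in \Omega(C)$. For $\mathbf{d}$-injectivity, assume $O_{x} = O_{y}$ and $\mathbf{d}(x) = \mathbf{d}(y)$; by Lemma~\ref{lem:base-base} there is an open local bisection $U \ni x$, and $U \in O_{x} = O_{y}$ forces $y \in U$, so the injectivity of $\mathbf{d}|_{U}$ yields $x = y$.

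The heart of the proof is $\mathbf{d}$-surjectivity. Given an identity $e \in C_{o}$ and a completely prime filter $B$ with $\omega(e) = \mathbf{d}(B)$, Lemma~\ref{lem:thatcher}(2) together with Lemma~\ref{lem:partial-isometry} allow us to write $B = (U A)^{\uparrow}$, with $U$ an open local bisection and $A = \mathbf{d}(B) = O_{e}$. Since $\mathbf{d}(U) \in O_{e}$, there is a unique $u \in U$ with $\mathbf{d}(u) = e$, and I claim $O_{u} = B$. For $B \subseteq O_{u}$: any $W \in B$ satisfies $W \supseteq U V$ for some $V \in O_{e}$, and after replacing $V$ by $V \cap C_{o} \in O_{e}$ one has $u = u \cdot e \in U V \subseteq W$. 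The delicate inclusion $O_{u} \subseteq B$ is handled by setting, for a given $W \ni u$, $V := \mathbf{d}(U \cap W)$, which is open in $C_{o}$ because $\mathbf{d}|_{U}$ is a homeomorphism onto the open set $\mathbf{d}(U)$ and $C_{o}$ is open in $C$; then $e \in V \in O_{e}$, and since $V \subseteq C_{o}$ one computes $U V = \{u' \in U : \mathbf{d}(u') \in V\}$, whence the local bisection property of $U$ forces $U V \subseteq U \cap W \subseteq W$, so $W \in (U A)^{\uparrow} = B$. The $\mathbf{r}$-case is entirely symmetric. The main obstacle is precisely this surjectivity step: the other parts are direct bookkeeping, but recovering $u$ from the filter data and matching $O_{u}$ with $B$ requires careful use of the homeomorphism property of $\mathbf{d}|_{U}$ together with the openness of $C_{o}$ in $C$.
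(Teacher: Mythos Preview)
Your proof is correct, and it follows the same overall architecture as the paper: functoriality, continuity via $\omega^{-1}(\mathscr{X}_{V})=V$, $\mathbf{d}$-injectivity via open local bisections, and $\mathbf{d}$-surjectivity by choosing an open local bisection in the given filter and locating the unique preimage of $e$.

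The one genuine methodological difference is in how you close the two ``hard'' inclusions. For preservation of composition, and again for $\mathbf{d}$-surjectivity, the paper never proves the reverse inclusion directly: having shown $O_{x}\cdot O_{y}\subseteq O_{xy}$ (resp.\ $U\in O_{u}\cap A$), it simply invokes Lemma~\ref{lem:thatcher}(4) --- two completely prime filters sharing a partial isometry and having the same $\mathbf{d}$ must coincide --- to conclude equality. You instead argue elementwise: for composition you appeal to the continuity of $\mathbf{m}$ to produce $U\ni x$, $V\ni y$ with $UV\subseteq W$, and for surjectivity you manufacture $V=\mathbf{d}(U\cap W)$ and use the local bisection property to force $UV\subseteq W$. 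Your route is more self-contained and makes the \'etale structure do the work explicitly; the paper's route is shorter and shows how much mileage one gets from the uniqueness criterion in Lemma~\ref{lem:thatcher}. Either way the argument goes through.
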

\begin{proof} We prove first that $\omega$ is a functor.
Suppose that $x$ is an identity of the category $C$.
Then $O_{x}$ is a completely prime filter that contains $C_{o}$.
Thus $O_{x}$ is an identity completely prime filter.
Let $U$ be an open set containing $x$.
Then $\mathbf{d}(U)$ is an open subset of $C_{o}$ containing $\mathbf{d}(x)$.
Let $V$ be any open set of $C$ containing $\mathbf{d}(x)$.
Then $V \cap C_{o}$ is an open subset of $C_{o}$ containing $\mathbf{d}(x)$.
We have therefore proved that $\mathbf{d}(O_{x}) = O_{\mathbf{d}(x)}$, and dually.
Suppose that $\mathbf{d}(x) = \mathbf{r}(y)$ in the category $C$.
Then by the above, we have that $\mathbf{d}(O_{x}) = \mathbf{r}(O_{y})$.
Clearly, $O_{x}O_{y} \subseteq O_{xy}$ since the product of open sets is an open set.
It follows that $O_{x} \cdot O_{y} \subseteq O_{xy}$.
But $O_{x} \cdot O_{y}$ and $O_{xy}$ are completely prime filters,
one contained in the other,
having the same value when $\mathbf{d}$ is applied.
It follows that $O_{x} \cdot O_{y} = O_{xy}$.
This proves that $\omega$ is a functor.
A base for the topology on $\mathsf{C}(\Omega (C))$ is given by sets of the form $\mathscr{X}_{U}$
where $U$ is an open subset of $C$.
Observe that $\omega^{-1}(\mathscr{X}_{U}) = U$.
It follows that $\omega$ is continuous.
Let $x,y \in C$ such that $\mathbf{d}(x) = \mathbf{d}(y)$ and $O_{x} = O_{y}$.
We prove that $x = y$; a dual result holds for the case wehere we assume instead that $\mathbf{r}(x) = \mathbf{r}(y)$
A base for the topology on $C$ is the set of open local bisections.
It follows that there is an open local bisection $U$ containing both $x$ and $y$.
we deduce that $x = y$.
We have proved that $\omega$ is $\mathbf{d}$-injective.
Now, suppose that $f$ is an identity of $C$ such that $\mathbf{d}(F^{\uparrow}) = \omega (f) = O_{f}$
and $\mathbf{d}(A) = F^{\uparrow}$.
Let $U$ be an open local bisection that belongs to $A$.
Then there is a unique element $x \in U$ such that $\mathbf{d}(x) = e$.
Then completely prime filter $O_{x}$ contains $U$ and $\mathbf{d}(O_{x}) = O_{f}$.
It follows that $\omega (x) = A$.
Similar results can be proved for $\mathbf{r}$.
We have therefore proved that $\omega$ is a covering (functor).\end{proof}

An \'etale topological category is said to be {\em sober} if $\omega$ is 
an isomorphism of catgeories and a homeomorphism of topological spaces.

\begin{lemma}\label{lem:sober-ec} Let $C$ be an \'etale topological category.
Then $C$ is sober if and only if the map $\omega$ is a bijection.
\end{lemma}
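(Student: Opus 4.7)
The forward implication is immediate from the definition: if $\omega$ is an isomorphism of categories and a homeomorphism of spaces, then a fortiori it is a bijection on underlying sets. So the only direction requiring work is to show that when $\omega$ is bijective it is automatically both a functor isomorphism and a homeomorphism.

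My plan is to use Lemma~\ref{lem:morphism-cont-cov-functor} as the main input. That lemma already hands us the fact that $\omega$ is a continuous covering functor, so nothing algebraic or categorical remains to verify beyond invertibility. For the categorical part I would observe that a bijective functor between small categories has a set-theoretic inverse that is automatically a functor (associativity, domains, codomains, and identities are preserved by transporting across the bijection); thus bijectivity of $\omega$, combined with its being a functor, upgrades it to an isomorphism of categories.

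For the topological part I need to show $\omega$ is open, since continuity is already in hand. First I would note that a base for the topology on $\mathsf{C}(\Omega(C))$ is given by the sets $\mathscr{X}_{U}$ with $U$ an open local bisection of $C$ (equivalently, $U$ a partial isometry in $\Omega(C)$ by Lemma~\ref{lem:partial-isometry}), and a base for the topology on $C$ is again the open local bisections, by Lemma~\ref{lem:base-base}. The key identity, recorded in the proof of Lemma~\ref{lem:morphism-cont-cov-functor}, is $\omega^{-1}(\mathscr{X}_{U})=U$ for every open $U\subseteq C$. Under the assumption that $\omega$ is bijective this at once yields $\omega(U)=\mathscr{X}_{U}$, so $\omega$ sends basic opens to basic opens. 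Since any bijection commutes with arbitrary unions, $\omega$ sends an arbitrary open subset of $C$, written as a union of open local bisections, to the corresponding union of sets $\mathscr{X}_{U}$, which is open. Hence $\omega$ is open, and combined with continuity and bijectivity it is a homeomorphism.

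There is no real obstacle here: all the substantive work (that $\omega$ is a continuous covering functor and that $\omega^{-1}(\mathscr{X}_{U})=U$) was already done in Lemma~\ref{lem:morphism-cont-cov-functor}, and the only point that requires any care is checking that a bijective covering functor whose inverse image of a basic open equals the original open set must be a homeomorphism, which reduces to the one-line computation $\omega(U)=\mathscr{X}_{U}$.
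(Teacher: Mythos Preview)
Your proof is correct and follows essentially the same line as the paper's. Both arguments reduce the problem to showing that $\omega$ is open and establish this via the identity $\omega(U)=\mathscr{X}_{U}$; the paper verifies the two inclusions directly, while you obtain it in one step from the already recorded fact $\omega^{-1}(\mathscr{X}_{U})=U$ together with bijectivity. Your detour through basic opens and unions is unnecessary (the identity $\omega(U)=\mathscr{X}_{U}$ already holds for every open $U$, and $\mathscr{X}_{U}$ is open by Lemma~\ref{lem:open-sets}), but it does no harm.
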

\begin{proof} Only one direction needs proving.
We suppose that the function $\omega$ is a bijection.
It is already a functor and so it is an isomorphism of categories.
Thus the result will be proved if we can show that $\omega$ is open.
Let $U$ be an open set of $C$.
By definition $\omega (U) = \{O_{c} \colon c \in U\}$,
where each $O_{c}$ is the set of all open sets that contain $c$.
For each $c$, we have that $U \in O_{c}$.
Thus, each element of $\omega (U)$ is a completely prime filter containing $U$.
We may therefore consider the set of {\em all} completely prime filters in $\Omega (C)$ that contain $U$
which is $\mathscr{X}_{U}$.
It follows that $\omega (U) \subseteq \mathscr{X}_{U}$.
On the other hand, let $A \in \mathscr{X}_{U}$.
Thus $A$ is a completely prime filter that contains $U$.
By assumption, $\omega$ is a bijection.
Thus $A = \omega (x)$ for some $x$.
Thus $A = O_{x}$ for some $x \in C$.
But $x$ belongs to every open set in $A$ and so belongs to $U$.
It follows that $A \in \omega (U)$.
We have therefore proved that $\omega (U) = \mathscr{X}_{U}$.
It follows that $\omega$ is open.
\end{proof}

The following was first proved as \cite[Lemma 8.2]{KL}.

\begin{proposition}\label{prop:sober-ec} Let $C$ be an \'etale category.
Then $C$ is sober if and only if $C_{o}$ is sober.
\end{proposition}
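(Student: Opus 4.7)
The plan is to combine Lemma~\ref{lem:sober-ec} (which characterizes sobriety of $C$ via bijectivity of $\omega$) with Lemma~\ref{lem:rishi} (which identifies the identity space of $\mathsf{C}(\Omega(C))$ with $\mathsf{pt}(\Omega(C_o))$) so as to reduce the statement to the classical fact about spaces. First I would observe that the monoid identity $e$ of the restriction quantal frame $\Omega(C)$ is exactly $C_o$, whence $e^{\downarrow} = \Omega(C_o)$ because $C_o$ is open in $C$. Lemma~\ref{lem:rishi} then provides a homeomorphism between the identity space of $\mathsf{C}(\Omega(C))$ and $\mathsf{pt}(\Omega(C_o))$, and chasing the definitions I would check that under this homeomorphism the restriction $\omega|_{C_o}$ is precisely the classical sobriety map $\omega_0 \colon C_o \to \mathsf{pt}(\Omega(C_o))$ sending $x$ to the set of open neighborhoods of $x$ in $C_o$. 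In particular, $C_o$ is sober if and only if $\omega_0$ is a bijection.

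For the forward implication, I would use that $\omega$, being a functor, sends $C_o$ into the identity space of $\mathsf{C}(\Omega(C))$; conversely, since it is a covering functor, Lemma~\ref{lem:covering-functors-identities} ensures that any element mapping to an identity must itself be an identity. Hence if $\omega$ is bijective on all of $C$ then it restricts to a bijection from $C_o$ onto the identity space, and composing with the homeomorphism of Lemma~\ref{lem:rishi} yields the bijection $\omega_0$, so $C_o$ is sober.

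For the reverse implication, I would assume $\omega_0$ is a bijection and prove that $\omega$ is also a bijection, after which Lemma~\ref{lem:sober-ec} finishes the job. For injectivity: if $\omega(x) = \omega(y)$, then applying $\mathbf{d}$ (which $\omega$ preserves as a functor) together with bijectivity of $\omega_0$ gives $\mathbf{d}(x) = \mathbf{d}(y)$, and then $\mathbf{d}$-injectivity of the covering functor $\omega$ forces $x = y$. For surjectivity: given a completely prime filter $A$ of $\Omega(C)$, form $\mathbf{d}(A)$, an identity completely prime filter; via the homeomorphism it corresponds to a point of $\mathsf{pt}(\Omega(C_o))$, which by surjectivity of $\omega_0$ equals $\omega_0(f)$ for some $f \in C_o$; translating back, $\omega(f) = \mathbf{d}(A)$, and $\mathbf{d}$-surjectivity of $\omega$ produces $x \in C$ with $\mathbf{d}(x) = f$ and $\omega(x) = A$.

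The substance of the argument is really just the reduction to the identity space; the main obstacle, such as it is, is the bookkeeping needed to verify that $e^{\downarrow}$ and $\Omega(C_o)$ coincide and that $\omega|_{C_o}$ really does correspond to the classical sobriety map under the homeomorphism of Lemma~\ref{lem:rishi}. Once those identifications are in hand, everything else is a mechanical consequence of the covering-functor axioms together with the previously established classical result.
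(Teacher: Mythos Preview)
Your argument is correct and somewhat cleaner than the paper's. The paper argues more directly: for the forward direction it shows by hand that $\widetilde{O_e} = \widetilde{O_f}$ forces $O_e = O_f$ (whence $e = f$ by sobriety of $C$), and for surjectivity it lifts a completely prime filter $F$ in $\Omega(C_o)$ to $F^{\uparrow}$ in $\Omega(C)$ and uses sobriety of $C$ to realize it as some $O_x$, which then must have $x \in C_o$. For the converse the paper invokes the structural description of completely prime filters in $\Omega(C)$ from Lemma~\ref{lem:thatcher}: every such filter is $(aF)^{\uparrow}$ with $a$ an open local bisection and $F$ a completely prime filter of projections, and sobriety of $C_o$ realizes $F$ as $\widetilde{O_e}$, after which the unique $x \in a$ with $\mathbf{d}(x) = e$ gives $O_x = (aF)^{\uparrow}$.

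By contrast, you bypass these explicit filter computations by exploiting that $\omega$ is already known to be a continuous covering functor (Lemma~\ref{lem:morphism-cont-cov-functor}) and that Lemma~\ref{lem:rishi} identifies the identity space of $\mathsf{C}(\Omega(C))$ with $\mathsf{pt}(\Omega(C_o))$; the covering-functor axioms then reduce bijectivity of $\omega$ on all of $C$ to bijectivity on $C_o$. What you gain is economy and a clearer separation of concerns (the structure theory of completely prime filters is encapsulated in the earlier lemmas rather than being unpacked again); what the paper's approach buys is self-containment, since it does not rely on having already verified the full covering property of $\omega$.
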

\begin{proof} The space $C_{o}$ is sober if the function $C_{o} \rightarrow \Omega (C_{o})$ is a bijection.
Suppose first that $C$ is sober.
We prove that $C_{o}$ is sober.
Denote the set of open sets in $C_{o}$ that contain $e \in C_{o}$ by $\widetilde{O_{e}}$.
Let $e,f \in C_{o}$ and suppose that $\widetilde{O_{e}} = \widetilde{O_{f}}$.
Then $O_{e} = O_{f}$; to see why, let $V$ be any open set of containing $e$.
Then $V \cap C_{o}$ is an open set of $C_{o}$ that contains $e$.
Thus $V \cap C_{o}$ is an open set of $C_{o}$ that contains $f$.
It follows that $V$ is an open set of $C$ that contains $f$.
This proves one inclusion and the reverse inclusion follows by symmetry.
By definition, $\omega (e) = \omega (f)$.
Thus $e = f$, as required.
Suppose now that $F$ is any completely prime filter of open sets of $C_{o}$.
Then $F^{\uparrow}$ is a completely prime filter of open sets of $C$.
By assumption, $F^{\uparrow} = \omega (x)$ for some $x \in C$.
But $F^{\uparrow}$ contains $F$ which consists of open subsets of the set of identities.
This means that $x = e$ is an identity.
It follows that $F = \widetilde{O_{e}}$.
We have therefore proved that $C_{o}$ is sober if $C$ is.

We now assume that $C_{o}$ is sober and prove that $C$ is sober.
We use our description of completely prime filters in $C$.
These have the form $(aF)^{-1}$ where $F$ is a completely prime filter in $e^{\downarrow}$,
$a$ is a partial isometry and $a^{\ast} \in F$.
By assumption, $F$ is of the form $\widetilde{O_{e}}$ where $e \in C_{o}$.
The partial isometries in \'etale categories are open local bisections.
It follows that there is a unique element $x \in a$ such that $\mathbf{d}(x) = e$.
It follows that $O_{e} \subseteq (aF)^{-1}$ but these two completely prime filters are equal since $(O_{e})^{\ast} = F$. 
Sobreity now follows.
\end{proof}

\begin{lemma}\label{lem:heath}\mbox{}
\begin{enumerate}
\item For each \'etale category $C$, the restriction quantal frame $\Omega (C)$ is spatial.
\item For each restriction quantal frame $Q$, the \'etale category $\mathsf{C}(Q)$ is sober. 
\end{enumerate}
\end{lemma}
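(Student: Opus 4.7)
The plan is to reduce both statements to the classical sobriety/spatiality result (Lemma~\ref{lem:sobriety-spatiality}) by exploiting the structure theorems that tie a restriction quantal frame (respectively an \'etale category) to its projection frame (respectively its identity space).

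For part (1), I would invoke Proposition~\ref{prop:spatial-eqf}, which says that an Ehresmann quantal frame is spatial if and only if $e^{\downarrow}$ is spatial. So it suffices to show that $e^{\downarrow}$ in $\Omega(C)$ is a spatial frame. The key observation is that, by Proposition~\ref{prop:pelosi} and its surrounding discussion, the projections of $\Omega(C)$ are precisely the open subsets of $C_{o}$; that is, $e^{\downarrow}$ in $\Omega(C)$ is canonically isomorphic (as a frame) to $\Omega(C_{o})$. By Lemma~\ref{lem:sobriety-spatiality}(2), $\Omega(C_{o})$ is spatial. Thus $e^{\downarrow}$ is spatial, and hence so is $\Omega(C)$.

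For part (2), I would use Proposition~\ref{prop:sober-ec}, which reduces sobriety of an \'etale category to sobriety of its identity space. Thus it is enough to check that $\mathsf{C}(Q)_{o}$ is sober. This is exactly where Lemma~\ref{lem:rishi} does the heavy lifting: it gives a homeomorphism between the identity space of $\mathsf{C}(Q)$ and $\mathsf{pt}(e^{\downarrow})$. By Lemma~\ref{lem:sobriety-spatiality}(1), any space of the form $\mathsf{pt}(F)$ is sober, so $\mathsf{pt}(e^{\downarrow})$ is sober, hence so is $\mathsf{C}(Q)_{o}$, and therefore $\mathsf{C}(Q)$ itself is sober.

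The potential obstacle is verifying cleanly that $e^{\downarrow}$ inside $\Omega(C)$ really is $\Omega(C_{o})$ as a frame; this uses the fact that $C_{o}$ is open in $C$ (noted just after the definition of \'etale category via \cite[Proposition 2.4]{Bice}) together with the observation that open subsets $U \subseteq C$ with $U \subseteq C_{o}$ are in bijection with open subsets of $C_{o}$. Once that identification is in place, both (1) and (2) follow essentially by citation, which is to be expected since Propositions~\ref{prop:spatial-eqf} and~\ref{prop:sober-ec} together with Lemma~\ref{lem:rishi} were set up precisely to enable this reduction to the classical frame/space duality.
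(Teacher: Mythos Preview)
Your proposal is correct and follows essentially the same route as the paper's own proof: reduce (1) via Proposition~\ref{prop:spatial-eqf} to spatiality of $e^{\downarrow}\cong\Omega(C_{o})$ and then invoke Lemma~\ref{lem:sobriety-spatiality}(2); reduce (2) via Proposition~\ref{prop:sober-ec} and Lemma~\ref{lem:rishi} to sobriety of $\mathsf{pt}(e^{\downarrow})$ and then invoke Lemma~\ref{lem:sobriety-spatiality}(1). Your additional remark justifying the identification $e^{\downarrow}\cong\Omega(C_{o})$ via openness of $C_{o}$ is a welcome clarification that the paper leaves implicit.
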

\begin{proof} 
(1) This follows from the fact  $\Omega (C)$ is spatial if and only if $\Omega (C_{o})$ is spatial by Proposition~\ref{prop:spatial-eqf},
since $\Omega (C_{o})$ is precisely the set of projections of $\Omega (C)$.
But now we use Lemma~\ref{lem:sobriety-spatiality}, which enables us to deduce that $\Omega (C_{o})$ is always spatial. 

(2) This follows from the fact that $\mathsf{C}(Q)$ is sober if and only if  $\mathsf{C}(Q)_{o}$ is sober by
Proposition~\ref{prop:sober-ec}.
However, by Lemma~\ref{lem:rishi}, we have that $\mathsf{C}(Q)_{o}$ is homeomorphic with $\mathsf{pt}(e^{\downarrow})$.
But now we use Lemma~\ref{lem:sobriety-spatiality}, which enables us to deduce that  $\mathsf{pt}(e^{\downarrow})$ is always sober.\end{proof}

Denote by \mbox{\bf RQF} the category of restriction quantal frames and their morphisms.
Denote by \mbox{\bf EC} the category of \'etale categories and their morphisms.

\begin{theorem}[Adjunction Theorem I]\label{them:adjunction} The functor 
$$\mathsf{C} \colon \mbox{\bf RQF}^{op} \rightarrow \mbox{\bf EC}$$
is right adjoint to the functor 
$$\Omega \colon \mbox{\bf EC} \rightarrow \mbox{\bf RQF}^{op}.$$
\end{theorem}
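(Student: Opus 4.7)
\emph{Plan.} I will establish the adjunction via its unit and counit, for which the building blocks already exist. I view $\omega_{C} \colon C \to \mathsf{C}(\Omega(C))$ from Lemma~\ref{lem:morphism-cont-cov-functor} as the unit $\eta_{C}$ and $\chi_{Q} \colon Q \to \Omega(\mathsf{C}(Q))$ from Lemma~\ref{lem:morphism-hobbit} as the counit $\epsilon_{Q}$, the latter regarded as a morphism $\Omega(\mathsf{C}(Q)) \to Q$ in $\mbox{\bf RQF}^{op}$. Granted these pointwise constructions and the functoriality of $\Omega$ and $\mathsf{C}$ (Propositions~\ref{prop:functor-one} and~\ref{prop:functor-two}), it is enough to verify naturality of $\omega$ and $\chi$ and then the two triangle identities.

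\emph{Naturality.} For a continuous covering functor $F \colon C \to D$, I check $\mathsf{C}(\Omega(F)) \circ \omega_{C} = \omega_{D} \circ F$ by evaluating at $x \in C$: both sides unfold to the completely prime filter $O_{F(x)} \subseteq \Omega(D)$, using that $\mathsf{C}(\Omega(F))(O_{x}) = (F^{-1})^{-1}(O_{x}) = \{V \in \Omega(D) : F^{-1}(V) \in O_{x}\} = \{V : F(x) \in V\}$. For a morphism $\phi \colon R \to S$ in RQF, the naturality square for $\chi$ is checked by evaluating at $r \in R$: both $\chi_{S}(\phi(r))$ and $\Omega(\mathsf{C}(\phi))(\chi_{R}(r)) = (\phi^{-1})^{-1}(\mathscr{X}_{r})$ equal $\mathscr{X}_{\phi(r)} = \{B \in \mathsf{C}(S) : \phi(r) \in B\}$.

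\emph{Triangle identities.} For the first identity, I must show that $\Omega(\omega_{C}) \circ \chi_{\Omega(C)} = \mathrm{id}_{\Omega(C)}$ in RQF. For $U \in \Omega(C)$, $\chi_{\Omega(C)}(U) = \mathscr{X}_{U}$, and then $\Omega(\omega_{C})(\mathscr{X}_{U}) = \omega_{C}^{-1}(\mathscr{X}_{U}) = \{x \in C : U \in O_{x}\} = U$. For the second identity, I must show that $\mathsf{C}(\chi_{Q}) \circ \omega_{\mathsf{C}(Q)} = \mathrm{id}_{\mathsf{C}(Q)}$ in EC. For $A \in \mathsf{C}(Q)$, $\omega_{\mathsf{C}(Q)}(A) = O_{A}$ (the set of open sets of $\mathsf{C}(Q)$ containing $A$), and then $\mathsf{C}(\chi_{Q})(O_{A}) = \chi_{Q}^{-1}(O_{A}) = \{q \in Q : \mathscr{X}_{q} \in O_{A}\} = \{q : A \in \mathscr{X}_{q}\} = \{q : q \in A\} = A$.

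\emph{Main obstacle.} The substantive work has already been absorbed into the earlier lemmas: that $\omega$ is a continuous covering functor (Lemma~\ref{lem:morphism-cont-cov-functor}), that $\chi$ is an RQF-morphism (Lemma~\ref{lem:morphism-hobbit}), and the presentation of a generic completely prime filter as $(a F)^{\uparrow}$ from Lemma~\ref{lem:thatcher} which underlies Lemma~\ref{lem:morphism-hobbit}. What remains is almost entirely bookkeeping: carefully tracking the op-category conventions so that the domains, codomains and directions of composition in RQF versus $\mbox{\bf RQF}^{op}$ align. The genuine content of the triangle identities reduces to two tautologies, namely $x \in U \iff U \in O_{x}$ and $q \in A \iff A \in \mathscr{X}_{q}$; once these are spelled out, the adjunction falls out directly, and Lemma~\ref{lem:heath} then tells us precisely on which subcategories it restricts to an equivalence.
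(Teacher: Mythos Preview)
Your proof is correct. You establish the adjunction via unit/counit and triangle identities, whereas the paper establishes it via the hom-set bijection: it shows directly that the assignments $\alpha \mapsto \alpha^{-1}\chi$ (from $\mbox{Hom}_{\mbox{\bf EC}}(C,\mathsf{C}(Q))$ to $\mbox{Hom}_{\mbox{\bf RQF}}(Q,\Omega(C))$) and $\beta \mapsto \beta^{-1}\omega$ are mutually inverse, and then asserts naturality. The two routes are of course equivalent and rest on exactly the same pair of tautologies you isolate, $x \in U \Leftrightarrow U \in O_{x}$ and $q \in A \Leftrightarrow A \in \mathscr{X}_{q}$; your organization has the minor advantage of making the naturality checks and the op-category bookkeeping explicit rather than left to the reader, while the paper's organization avoids having to name the triangle identities at all and goes straight to the bijection one ultimately wants.
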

\begin{proof} Given a continuous covering functor $\alpha \colon C \rightarrow \mathsf{C}(Q)$
we may construct the morphism $\alpha^{-1} \chi \colon Q \rightarrow \Omega (C)$.
Likewise, given a morphism $\beta \colon Q \rightarrow \Omega (C)$,
we may construct a morphism $\beta^{-1} \omega \colon C \rightarrow \mathsf{C}(Q)$.
We shall show that these two constructions are the inverses of each other.

Let $\alpha \colon C \rightarrow \mathsf{C}(Q)$ be a continuous covering functor.
We calculate what $(\alpha^{-1}\chi)^{-1}\omega$ does to elements of $C$.
Let $c \in C$.
Then $q \in ((\alpha^{-1} \chi)^{-1} \omega)(c)$ if and only if $q \in (\alpha^{-1} \chi)^{-1}(O_{c})$
if and only if $(\alpha^{-1}\chi)(q) \in O_{c}$
if and only if $c \in (\alpha^{-1}\chi)(q)$
if and only if $c \in \alpha^{-1} (\mathscr{X}_{q})$
if and only if $\alpha (c) \in \mathscr{X}_{q}$
if and only if $q \in \alpha (c)$.
It follows that $\alpha (c) =  ((\alpha^{-1} \chi)^{-1} \omega)(c)$, which is what was required.

Let $\beta \colon Q \rightarrow \Omega (C)$ be a morphism.
We calculate what $(\beta^{-1} \omega)^{-1} \chi$ does to an element of $Q$.
Let $q \in Q$.
Then $c \in ((\beta^{-1} \omega)^{-1} \chi)(q)$
if and only if $c \in (\beta^{-1} \omega)^{-1}(\mathscr{X}_{q})$
if and only if $(\beta^{-1}\omega)(c) \in \mathscr{X}_{q}$
if and only if $q \in (\beta^{-1}\omega)(c)$
if and only if $q \in \beta^{-1}(O_{c})$
if and only if $\beta(q) \in O_{c}$
if and only if $c \in \beta (q)$.
It follows that $\beta (q) = ((\beta^{-1} \omega)^{-1} \chi)(q)$, which is what was required.

Naturality is straightforward to prove here and so we have an adjunction.
\end{proof}

The following is immediate by Theorem~\ref{them:adjunction} and Lemma~\ref{lem:heath}.

\begin{corollary} The category of spatial restriction quantal frames is dually equivalent to the category of
sober \'etale categories.
\end{corollary}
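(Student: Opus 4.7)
The plan is to invoke the standard principle that an adjunction restricts to an equivalence between the full subcategories on which the unit and counit, respectively, become isomorphisms. I would begin by making explicit the identifications of $\omega$ and $\chi$ with the unit and counit of the adjunction constructed in Theorem~\ref{them:adjunction}. Specifically, under the natural bijection between morphisms $\alpha\colon C\to\mathsf{C}(Q)$ in $\mathbf{EC}$ and morphisms $\beta\colon Q\to\Omega(C)$ in $\mathbf{RQF}$, the choice $\beta=\chi_{Q}$ corresponds to $\alpha=\operatorname{id}$ and $\alpha=\omega_{C}$ corresponds to $\beta=\operatorname{id}$. Hence $\omega\colon C\to\mathsf{C}(\Omega(C))$ is the unit and $\chi\colon Q\to\Omega(\mathsf{C}(Q))$ is (up to the duality) the counit.

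Next, I would restrict the adjunction. Write $\mathbf{SpRQF}$ for the full subcategory of $\mathbf{RQF}$ on the spatial objects and $\mathbf{SobEC}$ for the full subcategory of $\mathbf{EC}$ on the sober objects. By Lemma~\ref{lem:heath}(1), $\Omega(C)$ is spatial for every \'etale category $C$, so $\Omega$ restricts to a functor $\mathbf{SobEC}\to\mathbf{SpRQF}^{op}$; by Lemma~\ref{lem:heath}(2), $\mathsf{C}(Q)$ is sober for every restriction quantal frame $Q$, so $\mathsf{C}$ restricts to a functor $\mathbf{SpRQF}^{op}\to\mathbf{SobEC}$. Naturality of $\omega$ and $\chi$ is inherited from Theorem~\ref{them:adjunction}, so these restricted functors together with the restrictions of $\omega$ and $\chi$ form an adjunction between $\mathbf{SobEC}$ and $\mathbf{SpRQF}^{op}$.

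I would then finish by checking that on these subcategories the unit and counit are isomorphisms. By definition, $Q$ is spatial precisely when $\chi_{Q}$ is an isomorphism of restriction quantal frames, so on $\mathbf{SpRQF}$ the counit is an iso. By Lemma~\ref{lem:sober-ec}, an \'etale category $C$ is sober precisely when $\omega_{C}$ is a bijection, hence an isomorphism of \'etale categories, so on $\mathbf{SobEC}$ the unit is an iso. A standard argument (unit iso + counit iso implies equivalence) now gives that $\Omega$ and $\mathsf{C}$ are quasi-inverse equivalences between $\mathbf{SobEC}$ and $\mathbf{SpRQF}^{op}$, which is precisely the asserted duality.

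There is no serious obstacle here, since the content has already been absorbed into Theorem~\ref{them:adjunction} and Lemma~\ref{lem:heath}. The only point requiring a moment of care is making sure that an isomorphism in $\mathbf{RQF}$ (respectively $\mathbf{EC}$) coincides with being an isomorphism of the underlying frames (respectively an isomorphism of categories together with a homeomorphism); both are immediate from how morphisms in $\mathbf{RQF}$ and $\mathbf{EC}$ were set up earlier in the paper, so this is a verification rather than an obstruction.
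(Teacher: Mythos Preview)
Your proposal is correct and follows precisely the route the paper intends: the paper's ``proof'' consists of the single sentence that the corollary is immediate from Theorem~\ref{them:adjunction} and Lemma~\ref{lem:heath}, and you have simply unpacked this by identifying $\omega$ and $\chi$ as the unit and counit and invoking the standard fact that an adjunction restricts to an equivalence where both are isomorphisms.
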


\section{Complete restriction monoids and restriction quantal frames}

The material in this section is taken from Section~2.4 and Section~2.5 of \cite{KL}.
Let $\theta \colon Q \rightarrow R$ be a morphism between restriction quantal frames.
According to Proposition~\ref{prop:pi-restriction-semigroup},
both $\mathsf{PI}(Q)$ and $\mathsf{PI}(R)$ are complete restriction monoids.
The morphism $\theta$ defines a function 
$\theta' = (\theta \mid \mathsf{PI}(Q)) \colon \mathsf{PI}(Q) \rightarrow \mathsf{PI}(R)$
by restriction.
It is a monoid homomorphism; 
it is a homomorphism of Ehresmann semigroups; 
it preserves all compatible joins;
and it preserves all finite meets.
It has an extra property due to the fact that $\theta$ maps top elements to top elements.
Let the top element of $Q$ be $1_{Q}$.
By assumption, we may write $1_{Q} = \bigvee_{i \in I} q_{i}$ where the $q_{i}$ are partial isometries.
It follows that $\theta (1_{Q}) = \bigvee_{i \in I} \theta'(q_{i}) = 1_{R}$.
Let $r$ be any partial isometry of $R$.
Then $r \leq 1_{R}$.
Thus $r = 1_{R} \wedge r$.
It follows that $r = \bigvee_{i \in I} r \wedge \theta'(q_{i})$.
But the partial isometries form an order-ideal.
We have therefore proved that in $\mathsf{PI}(R)$ we may write
$r =  \bigvee_{i \in I} r_{i}$ where each $r_{i} \leq  \theta'(q_{i})$;
we describe this latter property by saying that $\theta'$ is {\em proper}.

Formally, a morphism $\theta \colon S \rightarrow T$ between complete restriction monoids is {\em proper}
if $\mbox{im} (\theta)^{\bigvee} = T$.

A {\em morphism} between complete restriction monoids is defined
to be a monoid homomorphism of Ehresmann monoids that preserves compatible joins.
A {\em completely prime filter} $A$ in a complete restriction monoid $S$ is a proper (that is, one that does not contain the bottom element) filter that is downwards
directed with the property that if $\bigvee_{i \in I}a_{i}$ is defined and  $\bigvee_{i \in I}a_{i} \in A$
then $a_{i} \in A$ for some $i \in I$.

\begin{lemma} Let $\phi \colon S \rightarrow T$ be a morphism between complete restriction monoids.
If $\phi$ is proper then every completely prime filter of $T$ contains an element of the image of $\phi$.
\end{lemma}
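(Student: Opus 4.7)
The plan is to pick an arbitrary element of the given completely prime filter and exploit properness to replace it by an element that sits in the image of $\phi$, using upward closure of the filter.

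More concretely, let $A$ be a completely prime filter of $T$ and fix any $t \in A$. Since $\phi$ is proper, i.e.\ $\mbox{im}(\phi)^{\bigvee} = T$, the element $t$ may be written as a compatible join $t = \bigvee_{i \in I} t_{i}$, where for each $i$ we have $t_{i} \leq \phi (s_{i})$ for some $s_{i} \in S$ (this is exactly the form of properness highlighted in the motivating paragraph just before the definition). This join exists in the complete restriction monoid $T$ because properness was defined so that precisely such a decomposition is available.

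Now I apply the two remaining defining properties of a completely prime filter. Since $t = \bigvee_{i \in I} t_{i} \in A$ and $A$ is completely prime, there is some $i \in I$ with $t_{i} \in A$. Since $A$ is a filter, it is closed upwards, and $t_{i} \leq \phi (s_{i})$ therefore forces $\phi (s_{i}) \in A$. Thus $A$ meets the image of $\phi$, as required.

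There is essentially no obstacle here: the only subtlety is bookkeeping, namely being sure that the decomposition provided by properness lands inside the downset of the image in exactly the form needed to apply complete primeness followed by upward closure. Both features are baked into the definitions just recalled, so the argument is a one-paragraph chase of the definitions.
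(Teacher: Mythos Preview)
Your proof is correct and follows essentially the same approach as the paper's own argument: pick an element of the filter, use properness to write it as a join of elements each below something in $\mbox{im}(\phi)$, apply complete primeness to extract one summand lying in the filter, and then use upward closure to conclude that the corresponding $\phi(s_{i})$ lies in the filter. The only difference is cosmetic (your choice of letters $A,t,t_{i}$ versus the paper's $B,b,b_{i}$).
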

\begin{proof} Let $B$ be any completely prime filter in $T$.
Let $b \in B$.
By assumption, we may write $b =  \bigvee_{i \in I} b_{i}$ where each $b_{i} \leq  \phi (s_{i})$.
Since $B$ is a completely prime filter, $b_{i} \in B$ for some $i \in I$.
It follows that $\phi (s_{i}) \in B$.
\end{proof}

Recall from  \cite[Section 2.4]{KL}, that if $S$ is a complete restriction monoid then $\mathcal{L}^{\bigvee}(S)$
is the restriction quantal frame of all order-ideals of $S$ closed under arbitrary joins.
From \cite[Proposition 2.40]{KL}, it follows that if $R$ is a restriction quantal frame such that $\mathsf{PI}(R) = S$
then $\mathcal{L}^{\bigvee}(S)$ is isomorphic to $R$.

A proper morphism of complete restriction monoids that preserves finite meets will be called {\em callitic}.
Let $\theta \colon S_{1} \rightarrow S_{2}$ be a callitic morphism between complete restriction monoids.
Let $R_{1}$ be such that $\mathsf{PI}(R_{1}) = S_{1}$ and let $R_{2}$ be such that $\mathsf{PI}(R_{2}) = S_{2}$.
We may define $\Theta \colon R_{1} \rightarrow R_{2}$ by
$\Theta (a) = \bigvee_{i \in I} \theta (a_{i})$ where $a \in R_{1}$ and $a_{i} \in S_{1}$.
We can prove this is well-defined using \cite[Section 2.4]{KL}, but this can also be proved directly using 
properties of $\theta$.
The fact that $\Theta$ is a morphism of restriction quantal frames is now easy to check;
we show only that the top element of $R_{1}$ is mapped to the top element of $R_2$.
Let $1_{1}$ be the top element of $R_{1}$ and let $1_{2}$ be the top element of $R_{2}$.
We know that $1_{1}$ must be the join of all elements of $S_{1}$.
Thus $1_{1} = \bigvee_{s \in S_{1}} s$.
By definition, $\Theta (1_{1}) =  \bigvee_{s \in S_{1}} \theta (s)$.
By the same token, $1_{2} = \bigvee_{s' \in S_{2}} s'$.
However, $\theta$ is callitic.
It follows that for each $s' \in S_{2}$, we have that $s' \in \mbox{im} (\theta)^{\bigvee}$.
Whence, $1_{2} \leq \Theta (1_{1})$.
But  $\Theta (1_{1}) \leq 1_{2}$, and so $\Theta (1_{1}) = 1_{2}$.
We denote the category of complete restriction monoids and callitic morphisms by \mbox{\bf CRM}.
The following was proved as \cite[Theorem 2.44]{KL}, but also follows from the results of this section. 

\begin{theorem} 
The category \mbox{\bf CRM} is equivalent to the category \mbox{\bf RQF}. 
\end{theorem}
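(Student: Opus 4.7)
The plan is to construct quasi-inverse functors $\mathsf{PI} \colon \mbox{\bf RQF} \to \mbox{\bf CRM}$ and $\mathcal{L}^{\bigvee} \colon \mbox{\bf CRM} \to \mbox{\bf RQF}$ and exhibit natural isomorphisms $R \cong \mathcal{L}^{\bigvee}(\mathsf{PI}(R))$ and $S \cong \mathsf{PI}(\mathcal{L}^{\bigvee}(S))$. The object assignments are already in place: Proposition~\ref{prop:pi-restriction-semigroup} gives $\mathsf{PI}(R) \in \mbox{\bf CRM}$ for $R \in \mbox{\bf RQF}$, and the material recalled from \cite[Section~2.4]{KL} gives $\mathcal{L}^{\bigvee}(S) \in \mbox{\bf RQF}$ for $S \in \mbox{\bf CRM}$. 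The morphism assignments are precisely the restriction $\theta \mapsto \theta' = \theta|_{\mathsf{PI}(Q)}$ and the extension $\theta \mapsto \Theta$ described in the paragraphs preceding the statement, where the former is shown to be callitic and the latter to be a morphism of restriction quantal frames.

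The first step is to verify functoriality. For $\mathsf{PI}$ this is immediate, since restriction manifestly preserves composition and identities. For $\mathcal{L}^{\bigvee}$ the crucial observation is that any morphism of restriction quantal frames out of $\mathcal{L}^{\bigvee}(S)$ is determined by its values on the principal order-ideals $s^{\downarrow}$ with $s \in S$, because such ideals join to every element. The formula $\Theta(a) = \bigvee_{i} \theta(a_{i})$ is designed so that $\Theta$ agrees with $\theta$ on principal ideals, so that composition and identities transfer by uniqueness.

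Next I would establish the two natural isomorphisms. For the unit $\eta_{R} \colon R \to \mathcal{L}^{\bigvee}(\mathsf{PI}(R))$ sending $a$ to $a^{\downarrow} \cap \mathsf{PI}(R)$, this is the content of \cite[Proposition~2.40]{KL}; it is well-defined and inverted by $I \mapsto \bigvee I$ because every element of $R$ is a join of partial isometries (the Remark at the start of Section~4) and $\mathsf{PI}(R)$ is an order-ideal. For the counit $\varepsilon_{S} \colon \mathsf{PI}(\mathcal{L}^{\bigvee}(S)) \to S$ one would argue that the partial isometries of $\mathcal{L}^{\bigvee}(S)$ are exactly the principal order-ideals $s^{\downarrow}$ for $s \in S$, so that $s \mapsto s^{\downarrow}$ is a bijection whose inverse is $I \mapsto \bigvee I$; the relevant identification also comes from \cite[Section~2.4]{KL}. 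Naturality of both squares follows directly from the explicit formulas for $\theta'$ and $\Theta$: both routes around the square for $\theta \colon Q \to R$ send $q \in Q$ to the order-ideal of partial isometries below $\theta(q)$, and dually for the counit.

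The main obstacle is the bookkeeping needed to verify that the extension $\Theta$ of a callitic morphism $\theta$ satisfies all five axioms of a morphism of restriction quantal frames. The paragraph preceding the theorem handles preservation of top elements using properness; the remaining clauses, namely preservation of the monoid structure, the Ehresmann operations, partial isometries, arbitrary joins, and finite meets, all reduce, by join-density of principal ideals and the order-ideal property of $\mathsf{PI}(R)$, to the corresponding properties of $\theta$ on $S$, which are built into the definition of a callitic morphism.
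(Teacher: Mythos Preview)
Your proposal is correct and follows essentially the same approach as the paper. The paper does not give a detailed proof of this theorem at all: it simply cites \cite[Theorem~2.44]{KL} and remarks that the result ``also follows from the results of this section,'' meaning precisely the restriction $\theta \mapsto \theta'$ and extension $\theta \mapsto \Theta$ constructions together with \cite[Proposition~2.40]{KL}; your write-up is a faithful fleshing-out of that sketch.
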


\section{The second adjunction theorem}

We shall use the results of the previous section, to prove the results of this one.
In particular, we shall restate Theorem~\ref{them:adjunction} and replace restriction quantal frame by complete restriction monoids.

Let $S$ be a complete restriction monoid.
A nonempty subset $A \subseteq S$ is said to be a {\em filter}
if it is closed under binary meets and closed upwards.
It is {\em proper} if $0 \notin A$.
It is {\em completely prime} if whenever $\bigvee_{i \in I} a_{i}$ exists in $S$
and $\bigvee_{i \in I} a_{i} \in A$ then $a_{i} \in A$ for some $i \in I$.  

Let $S$ be a complete restriction monoid.
Then $S$ can be regarded as a subset of a restriction quantal frame $R$ with the property that 
$\mathsf{PI}(R) = S$;
thus the set of partial isometries of $R$ is precisely the set $S$.
This is proved in \cite[Section 2.4]{KL}.
Since $S \subseteq R$ in what follows, we need some notation so that we can be clear
about which monoid we are worling in.
If $X \subseteq R$, define 
$$X\downarrow \,= X \cap \mathsf{PI}(R);$$
in other words, those elements of $S$ which belong to $X$.
If $Y \subseteq S$, define
$$Y\uparrow \,= (Y^{\uparrow})\downarrow;$$
in other words, those elements of $S$ above an element of $Y$.
Denote by $\mathsf{C}'(S)$ the set of all completely prime filters of $S$.
We denote the completely prime filters in $S$ with primes,
and the completely prime filters in $R$ without primes.

\begin{lemma}\label{lem:bijection-one} 
Let $S \subseteq R$ as above.
Then there is a bijection from 
$\mathsf{C}'(S)$ 
to
$\mathsf{C}(R)$
given by $A' \mapsto (A')^{\uparrow}$.
\end{lemma}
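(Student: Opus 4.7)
The plan is to show that the map $A' \mapsto (A')\!\uparrow$ and the restriction map $A \mapsto A \!\downarrow$ are mutually inverse bijections, using the fact that $S = \mathsf{PI}(R)$ is an order-ideal of $R$ and that every element of $R$ is a join of partial isometries (the Remark following Proposition~\ref{prop:pi-restriction-semigroup}).

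First I would verify that $A' \mapsto (A')\!\uparrow$ takes values in $\mathsf{C}(R)$. Upward closure is immediate. Properness of $(A')\!\uparrow$ follows since $0 \leq s$ for $s \in A'$ would force $s = 0$, contradicting properness of $A'$. For downward directedness in $R$: if $x,y \in (A')\!\uparrow$ witnessed by $a,b \in A'$ with $a \leq x$, $b \leq y$, then $a \wedge b \in A'$ (the meet is the same whether computed in $S$ or $R$, since $S$ is an order-ideal) and $a \wedge b \leq x,y$. For complete primeness: if $\bigvee_{i} x_i \in (A')\!\uparrow$, pick $a \in A'$ with $a \leq \bigvee_i x_i$; since we are in a frame, $a = \bigvee_i (a \wedge x_i)$, each $a \wedge x_i$ is a partial isometry (order-ideal property), so this is a join in $S$, and complete primeness of $A'$ in $S$ produces some $i$ with $a \wedge x_i \in A'$, whence $x_i \in (A')\!\uparrow$.

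Next I would show that $A \mapsto A\!\downarrow$ gives a completely prime filter of $S$. Non-emptiness follows because every completely prime filter of $R$ contains a partial isometry (use that $1 = \bigvee p_i$ for partial isometries $p_i$, apply complete primeness). Upward closure in $S$, properness, and closure under binary meets are routine, using again that meets of partial isometries in $R$ lie in $S$. Downward directedness in $S$ uses that a common lower bound $z \in A$ of $x,y \in A \cap S$ satisfies $z \leq x \in S$ and hence $z \in S$ by the order-ideal property. For complete primeness in $S$: a join in $S$ of compatible partial isometries agrees with the corresponding join in $R$ (by Lemma~\ref{lem:compatibility} and the description of $\mathcal{L}^{\bigvee}(S) \cong R$), so complete primeness in $R$ transfers.

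Finally I would check the two compositions. The identity $(A')\!\uparrow \cap\, \mathsf{PI}(R) = A'$ is immediate from the upward closure of $A'$ within $S$. The identity $(A\!\downarrow)\!\uparrow = A$ is the main step: the inclusion $\subseteq$ is just upward closure of $A$ in $R$, while the reverse inclusion uses that for any $r \in A$, we may write $r = \bigvee_i r_i$ with $r_i$ partial isometries; complete primeness of $A$ in $R$ then gives $r_i \in A \cap S = A\!\downarrow$ with $r_i \leq r$, so $r \in (A\!\downarrow)\!\uparrow$. The main obstacle is this last equality, since it is the step that genuinely requires the decomposition of arbitrary elements of $R$ into joins of partial isometries together with complete primeness; once that is in hand, everything else is bookkeeping about the order-ideal and order-isomorphism between joins in $S$ and compatible joins in $R$.
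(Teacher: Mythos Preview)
Your proposal is correct and follows essentially the same route as the paper's own proof: both set up the inverse pair $A' \mapsto (A')^{\uparrow}$ and $A \mapsto A\cap\mathsf{PI}(R)$, check that each lands in the right set of completely prime filters, and then verify the two round-trips, with the key step being the decomposition of an arbitrary $r\in A$ as a join of partial isometries to obtain $(A\!\downarrow)^{\uparrow}=A$. You supply more detail than the paper (which waves several of these checks through as ``straightforward'' or ``easy''), in particular the observation that compatible joins in $S$ coincide with the corresponding joins in $R$, which is exactly what is needed for complete primeness of $A\!\downarrow$ in $S$. Two cosmetic points: your properness clause should read ``$s\le 0$'' rather than ``$0\le s$''; and be careful that in this section the paper reserves the \emph{postfix} $Y\uparrow$ for $(Y^{\uparrow})\cap\mathsf{PI}(R)$, so the map of the lemma really is $(A')^{\uparrow}$ with the superscript arrow --- your argument makes clear that this is what you intend, but the notation should match.
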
 
\begin{proof}
Let $A'$ be a completely prime filter of $S$.
We prove that $A = (A')^{\uparrow}$ is a completely prime filter of $R$.
It is straightforward to check that $A$ is a proper filter of $R$.
Suppose that $\bigvee_{i \in I} a_{i} \in A$.
Then there is an element $a \in A'$ such that
$a \leq \bigvee_{i \in I} a_{i}$.
It follows that $a = \bigvee_{i \in I} a_{i} \wedge a$.
By assumption, $a$ is a partial isometry.
It follows that each element $a_{i} \wedge a$ is a partial isometry.
Thus $a_{i} \wedge a \in A'$ for some $i \in I$.
It follows that $a_{i} \in A$, as required.
We have therefore defined a map from  
$\mathsf{C}'(S)$ 
to
$\mathsf{C}(R)$.
Observe that $A\downarrow \, = A'$.
It follows that this map is injective.
If $A \in \mathsf{C}(R)$ then $A' = A\downarrow$ is a non-empty set,
because each element of $R$ is a join of partial isometries,
and it is easy to check that $A'$ is a completely prime filter in $S$.
Observe that $A = (A')^{\uparrow}$ in $R$.
We have therefore defined a bijection.
\end{proof}

If $a \in S$, define $\mathscr{X}_{a}'$ to be the set of completely prime filters of $S$ that contain the element $a$.
The proof of the following is straightforward.

\begin{lemma}\label{lem:bijection-two} Under the above bijection, the set $\mathscr{X}_{a}'$ 
is mapped to the set $\mathscr{X}_{a}$.
\end{lemma}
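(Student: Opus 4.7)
The plan is to verify the set equality $\Phi(\mathscr{X}_a') = \mathscr{X}_a$, where $\Phi \colon \mathsf{C}'(S) \to \mathsf{C}(R)$ is the bijection $A' \mapsto (A')^{\uparrow}$ from Lemma~\ref{lem:bijection-one}. The single conceptual ingredient is that $a$ is given as an element of $S = \mathsf{PI}(R)$, so membership of $a$ makes sense in both $S$ and $R$, and the two notions of ``filter containing $a$'' are directly comparable.

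For the forward inclusion, I would take $A' \in \mathscr{X}_a'$, so $a \in A'$. Since $A' \subseteq (A')^{\uparrow}$, we get $a \in \Phi(A')$, hence $\Phi(A') \in \mathscr{X}_a$. For the reverse inclusion, I would start with $A \in \mathscr{X}_a$ and exhibit a preimage in $\mathscr{X}_a'$. The inverse of $\Phi$, as observed in the proof of Lemma~\ref{lem:bijection-one}, is $A \mapsto A\!\downarrow = A \cap \mathsf{PI}(R)$. Since $a \in A$ and $a \in \mathsf{PI}(R)$ by hypothesis, we have $a \in A\!\downarrow$, so $A\!\downarrow \in \mathscr{X}_a'$ and $\Phi(A\!\downarrow) = A$.

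There is no real obstacle here; the lemma is essentially a bookkeeping statement about how the bijection of Lemma~\ref{lem:bijection-one} interacts with the basic open sets on each side, and it is precisely the fact that $a$ is itself a partial isometry that makes the two descriptions of ``contains $a$'' coincide under the bijection. The only minor point worth stating explicitly in the write-up is the identity $\Phi^{-1}(A) = A\!\downarrow$, which is already embedded in the proof of Lemma~\ref{lem:bijection-one}.
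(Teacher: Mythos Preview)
Your proposal is correct and is exactly the routine verification the paper has in mind; the paper itself simply declares the proof ``straightforward'' without writing it out, and your two inclusions via $A' \subseteq (A')^{\uparrow}$ and $\Phi^{-1}(A) = A\!\downarrow$ are precisely what is needed.
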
 

Let $A'$ be a completely prime filter in $S$.
Define $(A')^{\ast} = \{a^{\ast} \colon a \in A'\}$ and  $(A')^{+} = \{a^{+} \colon a \in A'\}$.
Define $d(A') = (A')^{\ast}\uparrow$ and  $r(A') = (A')^{+}\uparrow$.
If $A',B'$ are both completely prime filters in $S$,
define $A' \bullet B' = (A'B')\uparrow$ if and only if $d(A') = r(B')$.

\begin{lemma}\label{lem:bijection-three} Let $S \subseteq R$ as above.
If $A'$ and $B'$ are completely prime filters in $S$ put $A = (A')^{\uparrow}$ and $B = (B')^{\uparrow}$.
\begin{enumerate} 
\item $d(A') = \mathbf{d}(A)\downarrow$ and $r(A') = \mathbf{r}(A)\downarrow$; in particular, both $d(A')$ and $r(A')$ are completely prime filters in $S$.
\item $\mathbf{d}(A) = d(A')^{\uparrow}$ and $\mathbf{r}(A) = r(A')^{\uparrow}$.
\item $d(A') = r(B')$ if and only if $\mathbf{d}(A) = \mathbf{r}(B)$.
\item If $d(A') = r(B')$, then $(A' \bullet B')^{\uparrow} = A \cdot B$;
it follows that $(A \cdot B)\downarrow \,= A' \bullet B'$.
\item $(\mathsf{C}'(S),\bullet, d,r)$ is a category and isomorphic to the category $\mathsf{C}(R)$.
\end{enumerate}
\end{lemma}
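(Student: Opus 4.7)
The plan is to prove parts (1)--(4) by direct unpacking of definitions, leveraging the bijection $A' \mapsto (A')^{\uparrow}$ from Lemma~\ref{lem:bijection-one}, and then deduce part (5) by transport of structure from the category $\mathsf{C}(R)$ of Proposition~\ref{prop:one}. The key recurring tools are that the unary operations $^{\ast}, {}^{+}$ are join-maps (hence order-preserving), that the quantale multiplication in $R$ is order-preserving in both arguments, and that products of partial isometries are partial isometries, so $S = \mathsf{PI}(R)$ is closed under multiplication in $R$.

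For part (1), the inclusion $d(A') \subseteq \mathbf{d}(A)\downarrow$ is immediate from $A' \subseteq A$. For the reverse inclusion I would start with $x \in S$ and $x \geq a^{\ast}$ for some $a \in A = (A')^{\uparrow}$, pick $a' \in A'$ below $a$, and use order-preservation of $^{\ast}$ to conclude $x \geq (a')^{\ast}$, hence $x \in d(A')$. The ``in particular'' clause then follows because $\mathbf{d}(A)$ is a completely prime filter in $R$ (by Lemma~\ref{lem:pitt} and Lemma~\ref{lem:callghan}) and Lemma~\ref{lem:bijection-one} transfers this property to $\mathbf{d}(A)\downarrow = d(A')$. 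Part (2) is the symmetric calculation: the inclusion $\mathbf{d}(A) \subseteq d(A')^{\uparrow}$ again uses order-preservation of $^{\ast}$ applied to a chosen $a' \in A'$ below a generator $a \in A$, and the reverse inclusion follows from $A' \subseteq A$. Part (3) then drops out by applying $\downarrow$ (or $\uparrow$) to the equalities of parts (1)--(2) and using their mutual inversion.

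Part (4) is the most technical point, because one must juggle upward-closure in $R$ (written $(\cdot)^{\uparrow}$) and upward-closure relative to $S$ (written $(\cdot)\uparrow$). After invoking part (3) to obtain $\mathbf{d}(A) = \mathbf{r}(B)$, so that $A \cdot B = (AB)^{\uparrow}$ in $\mathsf{C}(R)$, I would identify both $(A' \bullet B')^{\uparrow}$ and $A \cdot B$ with $\{x \in R : \exists\, a' \in A', b' \in B',\ a'b' \leq x\}$. The first identification uses that $a'b' \in S$, so witnesses in $A'B'$ can serve as witnesses for $(A'B')\uparrow$ without loss. For $(AB)^{\uparrow}$, one direction is immediate, and for the other direction, given $ab \leq x$ with $a \in A$, $b \in B$, one chooses $a' \in A'$ and $b' \in B'$ below $a$ and $b$ respectively and concludes $a'b' \leq ab \leq x$ by monotonicity of multiplication. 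The second equality $(A \cdot B)\downarrow = A' \bullet B'$ then follows because $A' \bullet B'$ is already $\uparrow$-closed inside $S$, so $(A' \bullet B')^{\uparrow} \cap S = A' \bullet B'$.

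Finally, part (5) is pure transport: Lemma~\ref{lem:bijection-one} gives a bijection of underlying sets; part (2) shows that $d$ and $r$ on $\mathsf{C}'(S)$ correspond to $\mathbf{d}$ and $\mathbf{r}$ on $\mathsf{C}(R)$; and parts (3) and (4) say that this bijection preserves composability and composition. Hence the category axioms for $(\mathsf{C}'(S),\bullet,d,r)$ follow from those for $\mathsf{C}(R)$ established in Proposition~\ref{prop:one}, and the bijection is a category isomorphism. I expect the main obstacle to be the bookkeeping in part (4) around the two flavors of upward closure; once one records that $A' \bullet B'$ is $\uparrow$-closed inside $S$ and that partial isometries are closed under multiplication, the rest is routine.
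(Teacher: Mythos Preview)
Your proposal is correct and follows essentially the same approach as the paper: both prove (1)--(4) by direct computation and deduce (5) by transport of structure via Lemma~\ref{lem:bijection-one}. The only stylistic difference is that where the paper writes an element $a \in A$ as a join of partial isometries and invokes complete primeness to extract some $a_i \in A'$, you go directly from $a \in (A')^{\uparrow}$ to a witness $a' \in A'$ below $a$; this is a harmless (and slightly cleaner) shortcut.
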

\begin{proof} (1) We shall prove the result for $d(A')$ only, since the proof for  $r(A')$ is similar.
Let $A'$ be a completely prime filter in $S$.
Then $A = (A')^{\uparrow}$ is a completely prime filter in $R$.
Since $A' \subseteq A$ it follows that $(A')^{\ast} subseteq A^{\ast}$ and so $((A')^{\ast})^{\uparrow} \subseteq (A^{\ast})^{\uparrow} = \mathbf{d}(A)$.
Let $b \in  (A^{\ast})^{\uparrow}$. 
Then there is an element $a^{\ast} \in A^{\ast}$ such that $a^{\ast} \leq b$ and $a \in A$.
By assumption, $a = \bigvee_{i \in I} a_{i}$ where the $a_{i}$ are partial isometries.
Observe that $a^{\ast} = \bigvee_{i \in I} a_{i}^{\ast}$.
Since $A$ is a completely prime filter in $R$ it follows that $a_{i} \in A$ for some $i \in I$.
Thus $a_{i}^{\ast} \in (A')^{\ast}$.
It follows that $a^{\ast} \in ((A')^{\ast})^{\uparrow}$ and so $b \in  ((A')^{\ast})^{\uparrow}$.
Thus, $d(A') = \mathbf{d}(A)\downarrow$.
We have already proved that $\mathbf{d}(A)$ is a completely prime filter in $R$,
so it follows that $d(A')$ is a completely prime filter in $S$.

(2) This is immediate by (1).

(3) This is immediate by (1).

(4) Both $A' \bullet B'$ and $A \cdot B$ are deined.
It is clear that $(A' \bullet B')^{\uparrow} \subseteq A \cdot B$.
Let $x \in A \cdot B$.
Then $ab \leq x$ where $a \in A$ and $b \in B$.
By assumption, we can write $a = \bigvee a_{i}$, where the $a_{i}$ are partial isometries,
and $b = \bigvee b_{j}$, where the $b_{j}$ are partial isometries.
Since $A$ and $B$ are each assumed to be completely prime filters in $R$,
it follows that $a_{i} \in A$ and $b_{j} \in B$ for some $i$ and $j$.
Whence, $a_{i} \in A'$ and $b_{j} \in B'$.
Observe that $a_{i}b_{j} \leq x$.
We have therefore proved that $x \in (A'B')^{\uparrow} \subseteq (A' \bullet B')^{\uparrow}$.

(5) This now follows by the results above and Lemma~\ref{lem:bijection-one}. 
\end{proof}

Let $S$ be a complete restriction monoid.
Because of the above results, we shall write $\mathsf{C}(S)$ instead of $\mathsf{C}'(S)$
Thus, $\mathsf{C}(S)$ is an \'etale category whose elements are the completely prime filters of $S$.

Denote by \mbox{\bf CRM} the category of complete restriction monoids and callitic morphisms.
Denote by \mbox{\bf EC} the category of \'etale categories and their morpphisms.
The following theorem now follows by what we have proved in this section,
the results of Section~6 and Theorem~\ref{them:adjunction}.

\begin{theorem}[Adjunction Theorem II]\label{them:adjunction-redux} The functor 
$$\mathsf{C} \colon \mbox{\bf CRM}^{op} \rightarrow \mbox{\bf EC}$$
is right adjoint to the functor 
$$\Omega \colon \mbox{\bf EC} \rightarrow \mbox{\bf CRM}^{op}.$$
\end{theorem}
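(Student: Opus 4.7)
The strategy is to reduce Theorem~\ref{them:adjunction-redux} to Theorem~\ref{them:adjunction} via the equivalence $\mbox{\bf CRM} \simeq \mbox{\bf RQF}$ proved at the end of Section~5, combined with the isomorphism $\mathsf{C}(S) \cong \mathsf{C}(R)$ exhibited in Lemma~\ref{lem:bijection-three}(5). Given a complete restriction monoid $S$, let $R = \mathcal{L}^{\bigvee}(S)$ denote the associated restriction quantal frame, so that $\mathsf{PI}(R) = S$; every callitic morphism $\theta$ extends uniquely to an RQF-morphism $\Theta$ as described in Section~5. Under these identifications, $\mathsf{C}$ on $\mbox{\bf CRM}$ matches $\mathsf{C}$ on $\mbox{\bf RQF}$ via the bijection of Lemma~\ref{lem:bijection-one}, and $\Omega$ on $\mbox{\bf EC}$ (now viewed as landing in $\mbox{\bf CRM}^{op}$) is obtained from the old $\Omega$ by restriction to partial isometries, which is well defined because by Lemma~\ref{lem:partial-isometry} the open local bisections are exactly the partial isometries in $\Omega(C)$.

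The hom-set bijection then arises as the composite
\[
\mathrm{Hom}_{\mbox{\bf EC}}(C,\mathsf{C}(S)) \;\xrightarrow{\sim}\; \mathrm{Hom}_{\mbox{\bf EC}}(C,\mathsf{C}(R)) \;\xrightarrow{\sim}\; \mathrm{Hom}_{\mbox{\bf RQF}}(R,\Omega(C)) \;\xrightarrow{\sim}\; \mathrm{Hom}_{\mbox{\bf CRM}}(S,\mathsf{PI}(\Omega(C))),
\]
where the first arrow uses Lemma~\ref{lem:bijection-three}(5), the middle arrow is Theorem~\ref{them:adjunction}, and the last arrow is the Section~5 equivalence $\theta \leftrightarrow \Theta$. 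Tracing through, this bijection sends a continuous covering functor $\alpha \colon C \to \mathsf{C}(S)$ to the callitic morphism $s \mapsto \alpha^{-1}(\mathscr{X}'_{s})$, and sends a callitic $\theta \colon S \to \mathsf{PI}(\Omega(C))$ to the continuous covering functor $c \mapsto \{s \in S \colon c \in \theta(s)\}$. These formulas are the formulas of Theorem~\ref{them:adjunction} translated across Lemma~\ref{lem:bijection-two}.

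What remains is largely bookkeeping: that $\alpha^{-1}(\mathscr{X}'_{s})$ is a partial isometry in $\Omega(C)$ (continuous covering functors pull open local bisections back to open local bisections, as used in Lemma~\ref{lem:maps}); that the induced $\theta$ is callitic (finite meets and Ehresmann structure are preserved because $\alpha^{-1}$ is a frame map and $\alpha$ is a functor, while properness holds because the $\mathscr{X}'_{s}$ cover $\mathsf{C}(S)$ and hence their $\alpha^{-1}$-preimages cover $C$); and that the set $\{s \in S \colon c \in \theta(s)\}$ is a completely prime filter in $S$. The main obstacle I anticipate is verifying the compatibility identity
\[
\Theta^{-1}(U) \cap S \;=\; \theta^{-1}\bigl(U \cap \mathsf{PI}(\Omega(C))\bigr) \qquad (U \in \Omega(C)),
\]
which is what guarantees that the third arrow in the composite above really is the correspondence induced by the Section~5 equivalence. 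This is however immediate once one recalls that $\Theta$ is the join-preserving extension of $\theta$ and that every $U \in \Omega(C)$ is a join of its sub-local-bisections. Granted this, Theorem~\ref{them:adjunction-redux} is a direct consequence of Theorem~\ref{them:adjunction}, with naturality in both $C$ and $S$ inherited from naturality of the three constituent bijections.
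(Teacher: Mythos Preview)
Your proposal is correct and follows precisely the route the paper takes: the paper's own proof is the single sentence ``follows by what we have proved in this section, the results of Section~6 and Theorem~\ref{them:adjunction}'', and your argument is a faithful unpacking of exactly that---transport Adjunction~I across the equivalence $\mbox{\bf CRM}\simeq\mbox{\bf RQF}$ using the filter bijection of Lemmas~\ref{lem:bijection-one}--\ref{lem:bijection-three}. One small correction: the equivalence you cite is in Section~6, not Section~5, and in your displayed compatibility identity the symbol $U$ should denote a completely prime filter in $\Omega(C)$ (i.e.\ some $O_{c}$) rather than a single open set.
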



\end{document}